\newtheorem{theorem}{Theorem}[section]
\newtheorem{corollary}[theorem]{Corollary}
\newtheorem{lemma}[theorem]{Lemma}
\newtheorem{proposition}[theorem]{Proposition}
\newtheorem{definition}[theorem]{Definition}
\newtheorem{remark}[theorem]{Remark}
\newtheorem{conjecture}[theorem]{Conjecture}
\def\N{\mathbb{N}}
\def\Z{\mathbb{Z}}
\def\R{\mathbb{R}}
\def\epsilon{\varepsilon}
\let\e=\varepsilon
\let\vp=\varphi
\let\vt=\vartheta
\let\t=\widetilde
\let\ol=\overline
\let\mc=\mathcal
\def\O{\Omega}
\def\hat{\widehat}
\def\tilde{\widetilde}
\def\eN{\mathrm{e}_N}
\DeclareMathOperator{\dist}{dist}
\DeclareMathOperator\supp{supp}
\DeclareMathOperator*{\essinf}{ess{\,}inf}
\def\thm#1{Theorem~\ref{thm:#1}}
\def\seq#1{(#1_n)_{n\in\N}}
\def\as{\quad\text{as }\;}
\def\tilde{\widetilde}
\def\1{\mathbbm{1}}
\def\Sph{\mathbb{S}^{N-1}}
\def\aos{asymptotic one-dimensional symmetry}
\newenvironment{formula}[1]{\begin{equation}\label{#1}}{\end{equation}\noindent}
\def\Fi#1{\begin{formula}{#1}}
\def\Ff{\end{formula}\noindent}
\newcommand{\be}{\begin{equation}}
\newcommand{\ee}{\end{equation}}
\newcommand{\baa}{\begin{array}}
\newcommand{\eaa}{\end{array}}
\newcommand{\ba}{\begin{eqnarray}}
\newcommand{\ea}{\end{eqnarray}}
\numberwithin{equation}{section}
\begin{document}
\date{}
\title{\bf{Asymptotic one-dimensional symmetry for the Fisher-KPP equation}}
\author{Fran\c cois Hamel$^{\hbox{\small{ a}}}$ and Luca Rossi$^{\hbox{\small{ b,c }}}$\thanks{This work has received funding from Excellence Initiative of Aix-Marseille Universit\'e~-~A*MIDEX, a French ``Investissements d'Avenir'' programme, and from the French ANR RESISTE (ANR-18-CE45-0019) project. The first author acknowledges support of the Institut Henri Poincaré (UAR 839 CNRS-Sorbonne Universit\'e), LabEx CARMIN (ANR-10-LABX-59-01), and Universit\`a degli Studi di Roma La Sapienza, where he was Sapienza Visiting Professor and where part of this work was done.}\\
\\
\footnotesize{$^{\hbox{a }}$Aix Marseille Univ, CNRS, Centrale Marseille, I2M, Marseille, France}\\
\footnotesize{$^{\hbox{b }}$SAPIENZA Univ Roma, Istituto ``G.~Castelnuovo'', Roma, Italy}\\
\footnotesize{$^{\hbox{c }}$CNRS, EHESS, CAMS, Paris, France}\\
}
\maketitle

\begin{abstract}
\noindent{}Let $u$ be a solution of the Fisher-KPP equation
\[
\partial_t u=\Delta u+f(u),\quad t>0,\ x\in\R^N.
\]
We address the following question: does~$u$ become locally planar as $t\to+\infty$~? Namely, does~$u(t_n,x_n+\.)$ converge locally uniformly, up to subsequences, towards a one-dimensional function, 
for any sequence $((t_n,x_n))_{n\in\N}$ in $(0,+\infty)\times\R^N$ such that $t_n\to+\infty$ as $n\to+\infty$~? 
This question is in the spirit of a conjecture of De Giorgi for stationary solutions of Allen-Cahn equations. 
The answer depends on the initial datum $u_0$ of~$u$. It is known to be affirmative when the support of~$u_0$ is bounded or when it lies between two parallel half-spaces. Instead, the answer is negative when the support of~$u_0$ is~``V-shaped''. We prove here that $u$ is asymptotically locally planar when the support of~$u_0$ is a convex set (satisfying in addition a uniform interior ball condition), or, more generally, when it is at finite Hausdorff distance from a convex set. We actually derive the result under an even more general geometric hypothesis on the support of~$u_0$. We recover in particular the aforementioned results known in the literature. We further characterize the set of directions in which~$u$ is asymptotically locally planar, and we show that the asymptotic profiles are monotone. Our results apply in particular when the support of~$u_0$ is the~subgraph of a function with vanishing global mean.
\end{abstract}
	
	
		
\section{Introduction}\label{sec:intro}
	
In this paper, we are interested in the large time description of solutions of the Fisher-KPP reaction-diffusion equation
\Fi{homo}
\partial_t u=\Delta u+f(u),\quad t>0,\ x\in\R^N,
\Ff
with $N\ge2$. The Fisher-KPP condition of~\cite{F,KPP} is
\be\label{fkpp}
\begin{cases}
f(0)=f(1)=0,\\
f(s)>0\hbox{ for all }s\in(0,1),\\
\displaystyle s\mapsto\frac{f(s)}{s}\hbox{ is nonincreasing in $(0,1]$}.
\end{cases}
\ee
As long as the regularity is concerned, we assume that~$f\in C^1([0,1])$. These assumptions on $f$ will always be  understood to hold.

We consider the Cauchy problem associated with~\eqref{homo}. The initial condition $u(0,\.)=u_0$ is assumed to be a characteristic function $\1_U$ of a measurable set~$U\subset\R^N$, i.e.
\be\label{defu0}
u_0(x)=\left\{\baa{ll}1&\hbox{if }x\in U,\vspace{3pt}\\ 0&\hbox{if }x\in\R^N\!\setminus\!U.\eaa\right.
\ee
This Cauchy problem is well posed and, given $u_0$, there is a unique bounded classical solution $u$ of~\eqref{homo} such that~$u(t,\cdot)\to u_0$ as~$t\to0^+$ in~$L^1_{loc}(\R^N)$. Furthermore, $0\le u(t,x)\le 1$ for all $t\ge0$ and $x\in\R^N$, from the maximum principle. For mathematical convenience, we extend~$f$ by~$0$ in~$\R\setminus[0,1]$, and the extended function, still denoted $f$, is then Lipschitz continuous in $\R$.

Instead of initial conditions $u_0=\1_U$, we could also have considered multiples~$\alpha\1_U$ of characteristic functions, with $\alpha>0$, or even other more general initial conditions $0\leq u_0\leq 1$ for which the upper level set $\{x\in\R^N:u_0(x)\geq h\}$ is at bounded Hausdorff distance from the support of $u_0$, for some $h\in(0,1)$ (see Section~\ref{secgeneral} below). But we preferred to keep the assumption~$u_0=\1_U$ for the sake of simplicity of the presentation of the statements.\\

The goal of the paper is to understand whether, and under which condition on the initial datum, the solution of~\eqref{homo} eventually becomes locally planar as time goes on. To express this property in a rigorous way, we consider the notion of the {\em $\O$-limit set} of a given bounded function $u:\R^+\times\R^N\to\R$, which is defined as follows: 
\be\label{def:Omega}\begin{array}{ll}
\hspace{-8pt}\Omega(u)\!:=\!\big\{&\!\!\!\!\!\psi\in L^\infty(\R^N)\, :\,   u(t_n,x_n+\.)\to\psi\text{ in $L^\infty_{loc}(\R^N)$ as $n\to+\infty$,}\\
& \!\!\!\!\!\text{for some sequences $(t_n)_{n\in\N}$ in $\R^+$ diverging to $+\infty$, and $(x_n)_{n\in\N}$ in $\R^N$}\big\}.
\end{array}
\ee
Roughly speaking, the $\O$-limit set contains all possible asymptotic profiles of the function as $t\to+\infty$. Notice that, for any bounded solution $u$ of~\eqref{homo}, the set $\Omega(u)$ is not empty and is included in $C^2(\R^N)$, from standard parabolic estimates. We say that $u$ is {\em asymptotically locally planar} if every $\psi\in\Omega(u)$ is {\it one-dimensional}, that is, if~$\psi(x)\equiv\Psi(x\.e)$ for all $x\in\R^N$, for some $\Psi\in C^2(\R)$ and~$e\in\Sph$, where $\Sph:=\{x\in\R^N:|x|=1\}$, $|\cdot|$ denotes the Euclidean norm in $\R^N$, and ``$\cdot$" denotes the Euclidean scalar product in $\R^N$. Furthermore, we say that $\psi$ is {\it one-dimensional and $($strictly$)$ monotone} if $\psi$ is as above with $\Psi$ (strictly) monotone.

This property reclaims the De Giorgi conjecture about bounded solutions of the Allen-Cahn equation (that is, bounded stationary solutions of the reaction-diffusion equation $\Delta u+u(1-u)(u-1/2)=0$ in $\R^N$, obtained after a change of unknown from the original Allen-Cahn equation), see~\cite{dGconj}. 
\\

Let us review the results in the literature about the asymptotic one-dimensional symmetry. Consider, as before, solutions emerging from indicator functions of a set~$U$. First, the asymptotic one-dimensional symmetry is known to hold when $U$ is bounded, as a consequence of~\cite{J}. The case of unbounded sets $U$ has been much less studied in the literature. However, the asymptotic one-dimensional symmetry is known to hold when~$U$ is the subgraph of a bounded function, by~\cite{BH1,B,HNRR,L,U1}. Conversely, the property fails when~$U$ is ``V-shaped'', i.e. when $U$ is the union of two half-spaces with non-parallel boundaries, as follows from the methods developed in~\cite{HN}, see Proposition~\ref{proV} below for further details. The properties listed in this paragraph are known to hold for other types of function $f$ as well, see~\cite{BH1,FM,HMR1,HMR2,MN,MNT,NT,RR1} and Section~\ref{secgeneral}.

A possible interpretation of these results is that the asymptotic one-dimensional symmetry holds provided $U$ is ``not too far'' from a convex set. This is indeed what we will~show.


\section{Statement of the main results}\label{secaos}

In order to state our main results, we define the notion of positive-distance-interior of a set $U\subset\R^N$ as
$$U_\delta:=\big\{x\in U:\dist(x,\partial U)\ge\delta\big\},\quad\delta>0,$$
where $\dist(x,A):=\inf\big\{|x-y|:y\in A\big\}$ for a set $A\subset\R^N$, with the convention $\dist(x,A)=+\infty$ if $A=\emptyset$. Throughout the paper, we denote
$$B_r(x):=\big\{y\in\R^N:|x-y|<r\big\}\ \hbox{ and }\ B_r:=B_r(0),$$
for any $x\in\R^N$ and $r>0$. We will also make use of the Hausdorff distance between subsets of $\R^N$, which is defined, for $A,B\subset\R^N$, by  
$$d_{\mc{H}}(A,B):=\max\Big(\sup_{x\in A}\dist(x,B),\sup_{y\in B}\dist(y,A)\Big),$$
with the conventions that
$$d_{\mc{H}}(A,\emptyset)=d_{\mc{H}}(\emptyset,A)=+\infty\hbox{ \ if $A\neq\emptyset\ $ \ and \ $\ d_{\mc{H}}(\emptyset,\emptyset)=0$}.$$

\begin{theorem}\label{thm:DG}
Let $u$ be the solution of~\eqref{homo} with an initial datum $u_0=\1_{U}$ such that $U\subset\R^N$ satisfies 
\Fi{UUdelta}
\exists\,\delta>0,\quad d_{\mc{H}}(U,U_\delta)<+\infty.
\Ff 
Assume moreover that $U$ is convex or {\em nearly convex}, that is, that there exists a convex set~$U'\subset\R^N$ satisfying $d_{\mc{H}}(U,U')<+\infty$. Then any function in $\Omega(u)$ is one-dimensional and, in addition, it is either constant or strictly monotone.
\end{theorem}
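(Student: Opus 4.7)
The strategy combines a reduction to the convex case, a geometric analysis at the propagation front, a sandwich of $u$ by one-dimensional comparison functions, and a Liouville-type rigidity argument for entire KPP solutions.

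First, since $d_{\mc{H}}(U,U')<+\infty$ for some convex $U'$ and $d_{\mc{H}}(U,U_\delta)<+\infty$, I would use the comparison principle together with the boundedness of the Fisher-KPP spreading speed to argue that a bounded Hausdorff perturbation of the initial support produces only a bounded spatial shift of the solution in the long run, and hence preserves $\Omega(u)$. This reduces the problem to the case where $U$ is itself convex and enjoys a uniform interior ball property of radius $\delta$. Fix now $\psi=\lim_n u(t_n,x_n+\cdot)\in\Omega(u)$ and set $c^*:=2\sqrt{f'(0)}$. If $\dist(x_n,U+c^*t_nB_1)\to+\infty$ then $\psi\equiv 0$; if $x_n$ lies deep inside $U+c^*t_nB_1$, the hair-trigger effect yields $\psi\equiv 1$. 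The nontrivial case is when $x_n$ stays at bounded distance from $\partial(U+c^*t_nB_1)$: there I let $y_n$ be a nearest front point, use convexity of the Minkowski sum $U+c^*t_nB_1$ to produce a supporting half-space with outward unit normal $e_n$, and pass to a subsequence along which $e_n\to e\in\Sph$.

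Convexity places $U$ inside a half-space $H_n^+$ with outward normal $e_n$, while the uniform interior ball condition supplies a ball $B_\delta(p_n)\subset U$ with $p_n$ controlled in the $e_n$-direction. By the comparison principle, the solutions $v_n^+$ and $v_n^-$ starting respectively from $\1_{H_n^+}$ and $\1_{B_\delta(p_n)}$ satisfy $v_n^-\le u\le v_n^+$. The function $v_n^+$ is one-dimensional in direction $e_n$ and, after Bramson centering, converges to the critical KPP travelling wave profile $\Phi$ along direction $e$; $v_n^-$ is radial and, by Jones'/Uchiyama's spreading theorem, converges near its front to the same profile $\Phi$ up to a bounded shift controlled by $\delta$. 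Evaluated at $(t_n,x_n+\xi)$ and passed to the limit, this traps $\psi(\xi)$ between two shifted copies of $\Phi(\xi\cdot e-a_\pm)$ along the single direction $e$.

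To conclude, I would promote the limit to an entire bounded solution $\mc{U}$ of~\eqref{homo} on $\R\times\R^N$ with $\mc{U}(0,\cdot)=\psi$, built from $(t,\xi)\mapsto u(t_n+t,x_n+\xi)$ via parabolic Schauder estimates. The sandwich at every time slice, combined with the sliding method applied to shifts of $\mc{U}$ along $e$, forces $\mc{U}$ to be monotone in $\xi\cdot e$; a Liouville-type rigidity --- that a bounded entire KPP solution asymptotic to $0$ and $1$ in opposite directions of $e$ and bracketed by critical waves must be one-dimensional in direction $e$ --- then implies $\mc{U}$ depends only on $\xi\cdot e$ and $t$. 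Hence $\psi$ is one-dimensional; the strong maximum principle, applied to the nonnegative directional derivative $\partial_e\mc{U}$ satisfying a linear parabolic equation, yields the \emph{constant or strictly monotone} dichotomy. The main obstacle, in my view, is reconciling the different logarithmic Bramson corrections between half-space data (order $\tfrac{3}{2c^*}\log t$) and bounded-support data (order $\tfrac{N+2}{2c^*}\log t$) so as to keep the sandwich gap bounded as $n\to+\infty$: this is where the uniform interior ball condition~\eqref{UUdelta} is essential, since it secures a ball of fixed radius $\delta$ at controlled distance from $y_n$, converting the qualitative Hausdorff hypothesis into a quantitative sandwich.
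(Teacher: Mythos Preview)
Your proposal contains a genuine gap at precisely the point you flag as ``the main obstacle.'' The sandwich $v_n^-\le u\le v_n^+$ with $v_n^+$ starting from a half-space and $v_n^-$ from a ball $B_\delta(p_n)$ does \emph{not} close: the front of the half-space solution sits at $c^*t-\tfrac{3}{c^*}\log t+O(1)$, while the front of the radial solution from a fixed ball sits at $c^*t-\tfrac{N+2}{c^*}\log t+O(1)$ (see \cite{D,RRR}). The gap between the two is $\tfrac{N-1}{c^*}\log t_n\to+\infty$, so the limit $\psi$ is trapped only between $\Phi(\xi\cdot e-a_-)$ and $\Phi(\xi\cdot e-a_+)$ with $a_+-a_-\to+\infty$, which gives no information. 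The uniform interior ball condition~\eqref{UUdelta} does not cure this: it supplies a ball of fixed radius $\delta$, and a fixed-radius ball always incurs the $N$-dimensional logarithmic delay, regardless of how well-placed it is. Consequently the Liouville rigidity from~\cite{BH1}, which requires the entire solution to be trapped between two \emph{bounded} translates of a planar front, cannot be invoked.

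A second, smaller gap: the reduction ``bounded Hausdorff perturbation preserves $\Omega(u)$'' is asserted but not proved, and it is not obvious. The paper does \emph{not} argue this way; instead it shows (Lemma~\ref{lem:ballcone}) that the geometric hypothesis~\eqref{ballcone} is stable under bounded Hausdorff perturbations, and then works directly with the original set~$U$.

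The paper's route is entirely different from your sandwich/Liouville scheme. It introduces an \emph{opening function} $\mc{O}(x)$ and a more general condition~\eqref{ballcone}, proves the symmetry under that condition (Theorem~\ref{thm:DGgeneral}), and deduces Theorem~\ref{thm:DG} as a corollary. The heart of the argument is an approximation lemma (Lemma~\ref{lem:approximation}) showing that at time $\tau$ the solution is $C^1$-close, on a cylinder of radius $\sim\sigma\tau$, to the solution whose initial support has been truncated to a cylinder of radius $\sim 3\sigma\tau$; the error is controlled by a family of retracting supersolutions built as sums of planar fronts (Lemma~\ref{lem:super}). Since the truncated support is bounded in the $x'$-directions, Jones' reflection argument applies to the truncated solution and forces $\nabla_{x'}\psi\equiv0$. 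No Bramson asymptotics enter, and the logarithmic-shift mismatch never arises.
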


Theorem~\ref{thm:DG} extends the known results about the asymptotic one-dimensional symmetry for the Fisher-KPP equation that, we recall, have been established under the assumption that $U$ is bounded \cite{J}, or it is the subgraph of a bounded function~\cite{BH1,B,HNRR,L,U1}. 

Condition~\eqref{UUdelta} means that there exists some $R>0$ such that, for any~$x\in U$, there is a ball $B_\delta(x_0)$ contained in $U$ of radius $\delta$ and centered at a point $x_0$ such that $|x-x_0|<R$. It is fulfilled in particular if~$U$ satisfies a uniform interior ball condition. It is not hard to see that, in dimension $N=2$, for a convex set $U$, property~\eqref{UUdelta} is equivalent to require that~$U$ has nonempty interior. The role of assumption~\eqref{UUdelta} is cutting off regions of $U$ which play a negligible role in the large-time behavior of the solution of the Cauchy problem. This assumption is necessary, otherwise one could take a set $U'$ for which the asymptotic one-dimensional symmetry fails (for instance V-shaped), then consider the set $U:=U'\,\cup\,\bigcup_{k\in\Z^N}B_{e^{-|k|^2}}(k)$: the set $U$ is nearly convex, being at finite Hausdorff distance from the convex set~$\R^N$, but it does not satisfy~\eqref{UUdelta} and the asymptotic one-dimensional symmetry fails for $U$, see Proposition~\ref{proV2} below for further details.

Let us describe the idea of the proof of \thm{DG}, in relation with the previous proofs in the literature. First, when $U$ is the subgraph of a bounded function, the one-dimensional symmetry is derived combining~\cite{B} or~\cite{HNRR,L,U1} with the Liouville-type result of~\cite{BH1}, which asserts that an entire solution trapped between two translations of a planar traveling front is necessary a planar traveling front --~hence one-dimensional at every time. The Liouville result is proved using the sliding method. The proof of~\cite{J} concerning the case where $U$ is bounded is much quicker. It relies on the same reflection argument as in the moving plane method. This very elegant proof works for arbitrary nonlinear terms~$f$, but unfortunately fails as soon as~$U$ is unbounded. To circumvent this difficulty, we develop an argument that allows us to extend the technique of~\cite{J} even when~$U$ is unbounded. This is the main technical contribution of the present paper. The idea is to approximate the solution through a suitable truncation of the initial support~$U$. The choice of the truncation cannot be made once and for all, but it rather depends on the time at which we want to approximate the solution. In order to control the approximation error, we exploit a new family of retracting supersolutions obtained as a superposition of a large number of traveling fronts, see Lemma~\ref{lem:super} below.

\vspace{-7pt}
\begin{figure}[H]
	\begin{center}
		\includegraphics[height=6cm]{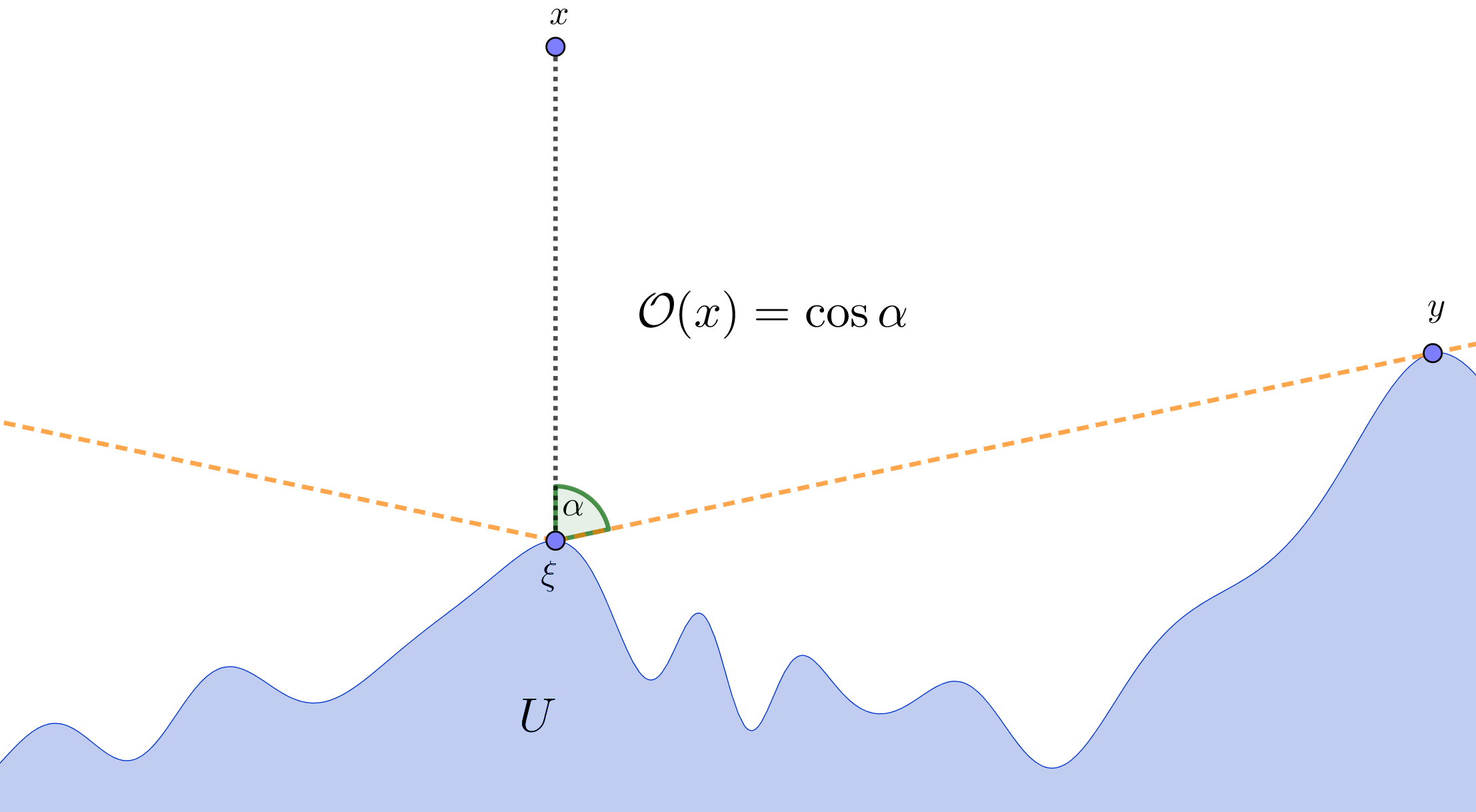}
		\caption{The definition of the {\em opening} function~$\mc{O}$.}
		\label{fig:O}
	\end{center}
	\vspace{-7pt}
\end{figure}
\vspace{-7pt}
As a matter of fact, the convex-proximity assumption on $U$ in \thm{DG} is a very special case of a geometric hypothesis under which we prove the one-dimensional symmetry, that we now introduce. For a given nonempty set $U\subset\R^N$ and a given point $x\in\R^N$, we let $\pi_x$ denote the set of orthogonal projections of~$x$ onto $\ol U$, i.e.,
\be\label{defpix}
\pi_x:=\big\{\xi\in\ol U: |x-\xi|=\dist(x,U)\big\},
\ee
and, for $x\notin\ol U$, we define the {\em opening} function as follows:
\Fi{opening}
\mc{O}(x):=\sup_{\xi\in\pi_x,\,y\in U\setminus\{\xi\}}\,\frac{x-\xi}{|x-\xi|}\.\frac{y-\xi}{|y-\xi|},
\Ff
with the convention that $\mc{O}(x)=-\infty$ if $U=\emptyset$ or $U$ is a singleton (otherwise $-1\leq\mc{O}(x)\leq1$). Namely, when $\mc{O}(x)\neq-\infty$, one has $\mc{O}(x)=\cos\alpha$, where $\alpha$ is the infimum among all~$\xi\in\pi_x$ of half the opening of the largest exterior cone to $U$ at~$\xi$ having axis~$x-\xi$, see Figure~\ref{fig:O}. 

Here is our most general asymptotic symmetry result.

\begin{theorem}\label{thm:DGgeneral}
Let $u$ be a solution of~\eqref{homo} with an initial datum $u_0=\1_{U}$ such that $U\subset\R^N$ satisfies~\eqref{UUdelta} and moreover
\Fi{ballcone}
\lim_{R\to+\infty}\bigg(\,\sup_{x\in\R^N,\,\dist(x,U)=R}\mc{O}(x)\bigg)\leq 0.
\Ff		
Then any function in $\Omega(u)$ is one-dimensional and, in addition, it is either constant or strictly monotone.
\end{theorem}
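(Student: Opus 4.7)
Take $\psi\in\Omega(u)$, so that $u(t_n,x_n+\cdot)\to\psi$ locally uniformly for some sequences $t_n\to+\infty$ and $x_n\in\R^N$. Following the strategy sketched in the introduction, the plan is to reduce to the bounded-support case~\cite{J} via an $n$-dependent truncation: set $V_n:=U\cap B_{R_n}(x_n)$ with $R_n=R_n(t_n)\to+\infty$ to be tuned against $t_n$, and let $v_n$ denote the solution of~\eqref{homo} with initial datum $\1_{V_n}$. Parabolic comparison gives $0\leq u-v_n$, with initial datum of the difference dominated by $\1_{U\setminus B_{R_n}(x_n)}$.

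The crux is to prove that $u(t_n,x_n+\cdot)-v_n(t_n,x_n+\cdot)\to 0$ locally uniformly. For this I would apply the retracting-supersolution construction of Lemma~\ref{lem:super} to produce $\Phi_n\geq u-v_n$ with $\Phi_n(t_n,y)\to 0$ locally uniformly in $y$ near $x_n$. Here hypothesis~\eqref{ballcone} is decisive: once $R_n$ is large, every $z\in U\setminus B_{R_n}(x_n)$ projects onto $\overline{U}$ at some $\xi$ where the exterior cone to $U$ has opening arbitrarily close to $\pi$, so $U$ is locally half-space-like at $\xi$; a planar Fisher-KPP traveling front rooted in such a nearly-flat piece retracts at the critical speed, and superposing finitely many such fronts over the relevant projection set produces $\Phi_n$. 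Assumption~\eqref{UUdelta} enters to guarantee that these fronts can actually be planted in the interior of $U$, ruling out pathological thin pieces along which the construction would fail.

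Granted this approximation, I apply Jones's reflection argument from~\cite{J} to the bounded-support solution $v_n$ at time $t_n$. For every $e\in\Sph$, sliding the hyperplane $H^e_\lambda:=\{x\cdot e=\lambda\}$ down from $\lambda=+\infty$ produces, at each height where the reflection of the cap $V_n\cap\{x\cdot e>\lambda\}$ is contained in $V_n$, the pointwise comparison $v_n(t_n,R_{H^e_\lambda}(x))\geq v_n(t_n,x)$ for all $x\cdot e>\lambda$. Shifting by $x_n$ and passing to the limit along a diagonal subsequence, the residual error is governed by the ``overhang'' of $(V_n-x_n)\cap\{x\cdot e>\lambda_n\}$ after reflection, which by~\eqref{ballcone} and~\eqref{UUdelta} vanishes for every $e\in\Sph$ outside a single exceptional axis $\pm e^*$ selected by the subsequence. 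Invariance of $\psi$ under all these reflections then forces $\psi(x)\equiv\Psi(x\cdot e^*)$ for some $\Psi\in C^2(\R)$.

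Finally, $\psi$ is realized as the $t=0$ trace of an entire solution $\tilde u$ of~\eqref{homo}, obtained by a further diagonal extraction from the shifts $u(t_n+\cdot,x_n+\cdot)$; by uniqueness $\tilde u(t,x)=\tilde\Psi(t,x\cdot e^*)$ with $\tilde\Psi$ a bounded entire solution in $[0,1]$ of the one-dimensional Fisher-KPP equation. The parabolic strong maximum principle applied to the spatial derivative $\partial_\xi\tilde\Psi$, together with the spreading structure of the original Cauchy problem, yields the dichotomy: $\Psi$ is either constant or strictly monotone. The main obstacle I foresee is the approximation step: calibrating $R_n$ so that $V_n$ exhausts $U$ in any compact window around $x_n$ by time $t_n$, while the tail $U\setminus B_{R_n}(x_n)$ does not leak forward into a neighborhood of $x_n$, is a delicate balance, and constructing $\Phi_n$ to make this balance quantitative is precisely the content of the novel Lemma~\ref{lem:super}, built as a superposition of many traveling fronts.
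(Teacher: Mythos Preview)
Your overall architecture---truncate the initial support, control the error via Lemma~\ref{lem:super}, apply a Jones-type reflection to the truncated solution, then upgrade monotonicity via an entire-solution argument---matches the paper's strategy. However, two steps in your sketch are genuinely off and would not go through as written.

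First, your use of hypothesis~\eqref{ballcone} is misplaced. You write that ``every $z\in U\setminus B_{R_n}(x_n)$ projects onto $\overline U$ at some $\xi$ where the exterior cone to $U$ has opening arbitrarily close to $\pi$''. But $z\in U$, so its projection onto $\overline U$ is $z$ itself and $\mc{O}(z)$ is not even defined. Condition~\eqref{ballcone} is a statement about points \emph{far from} $U$; in the paper it is applied to the observation point $x_n$ itself (in the nontrivial regime $\dist(x_n,U)/t_n\to c^*$), yielding that, after a rigid motion sending the projection direction $(x_n-\xi_n)/|x_n-\xi_n|$ to $\mathrm{e}_N$, the transformed set $U_n$ is contained in a cone $\{x_N\le\alpha_n|x'|\}$ with $\alpha_n\to0$. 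This cone containment is what makes both the approximation lemma (Lemma~\ref{lem:approximation}) and the subsequent reflection step work. In particular, the distinguished direction $e^*$ does not ``emerge'' from the reflection argument: it is identified beforehand as the limit of the projection directions $(x_n-\xi_n)/|x_n-\xi_n|$.

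Second, the reflection step is not ``invariance of $\psi$ under all reflections except one axis''. With your ball-truncation $V_n=U\cap B_{R_n}(x_n)$ there is no usable description of which half-spaces contain $V_n$, so the sliding-plane picture has no traction. The paper instead truncates \emph{cylindrically}, $U_n\cap(B'_{\vt t_n}\times\R)$, and argues by contradiction: if $\nabla_{x'}\tilde\psi(\bar x)\neq0$, then the line through $k_n\mathrm{e}_N+\bar x$ in the gradient direction of the truncated solution is disjoint from the half-cylinder $H_n$ containing the truncated support; one separates them by a hyperplane, reflects, and obtains a strict Hopf inequality on the normal derivative that contradicts the fact that the gradient is tangent to the separating hyperplane. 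This is a pointwise argument at time $t_n$, not an asymptotic symmetry statement \`a la Jones, which is essential since the truncated support has diameter comparable to $t_n$. Your final step (entire solution plus strong maximum principle for the constant/strictly monotone dichotomy) is correct and matches the paper.
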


It is understood that the left-hand side in condition~\eqref{ballcone} is equal to $-\infty$ (hence the condition is fulfilled) if $\sup_{x\in\R^N}\dist(x,U)<+\infty$ (and indeed in such case the asymptotic one-dimensional symmetry trivially holds because condition~\eqref{UUdelta} yields that $u(t,x)\to1$ uniformly in $x\in\R^N$ as~$t\to+\infty$, see Proposition~\ref{pro:uniformspreading} below). We remark that the limit in~\eqref{ballcone} always exists, because the involved quantity is nonincreasing with respect to~$R$, see Lemma \ref{lem:ballcone} below.

The optimality of hypotheses~\eqref{UUdelta} and~\eqref{ballcone} is discussed in Section~\ref{sec54} below. Hypothesis~\eqref{ballcone} means that the angle $\alpha$ in Figure~\ref{fig:O} tends to a value larger than or equal to~$\pi/2$ (which means that the exterior cone contains a half-space) as $\dist(x,U)\to+\infty$. Theorem~\ref{thm:DGgeneral} yields Theorem~\ref{thm:DG} because, firstly, convex sets satisfy $\mc{O}(x)\leq0$ for every $x\notin\ol U$ (actually, they are characterized by such condition in the class of closed sets) and, secondly, if \eqref{ballcone} holds for a given set, then it holds true for any set at finite Hausdorff distance from it, as stated by~Lemma \ref{lem:ballcone}. However, the class of sets satisfying~\eqref{ballcone} is wider. It contains for instance the subgraphs of functions with {\em vanishing global mean},~i.e.,
\Fi{Usub}
U=\big\{x=(x',x_N)\in\R^{N-1}\times\R \ : \ x_N\le\gamma(x')\big\},
\Ff
with $\gamma\in L^\infty_{loc}(\R^{N-1})$ such that
\Fi{VGM}
\frac{\gamma(x')-\gamma(y')}{|x'-y'|}\longrightarrow 0\as|x'-y'|\to+\infty.
\Ff
As a matter of fact, when $U$ is given by \eqref{Usub}-\eqref{VGM}, we derive a more precise characterization of the
$\Omega$-limit set, see Corollary~\ref{cor:DGVGM} below. 

Theorems~\ref{thm:DG} and~\ref{thm:DGgeneral} are concerned with locally uniform convergence properties along sequences of times $(t_n)_{n\in\N}$ diverging to $+\infty$ and sequences of points $(x_n)_{n\in\N}$. We now assert an asymptotic property which is satisfied {\em uniformly} in $\R^N$. It is expressed in terms of the eigenvalues of the Hessian matrices $D^2u(t,x)$ (with respect to the $x$ variables). For a symmetric real-valued matrix $A$ of size $N\times N$, let $\lambda_1(A)\le\cdots\le\lambda_N(A)$ denote its eigenvalues, and let
$$\sigma_k(A):=\sum_{1\le j_1<\cdots<j_k\le N}\lambda_{j_1}(A)\times\cdots\times\lambda_{j_k}(A),\ \ \ \ 1\le k\le N,$$
be the elementary symmetric polynomials of eigenvalues of $A$ ($\sigma_k(D^2u(t,x))$ is also called $k$-Hessian).

\begin{theorem}\label{thm:global}
Let $u$ be as in Theorem~$\ref{thm:DGgeneral}$. Then,
$$\forall\,2\le k\le N,\quad \sigma_k(D^2u(t,x))\to0\ \hbox{ as $t\to+\infty$ uniformly in $x\in\R^N$.}$$
\end{theorem}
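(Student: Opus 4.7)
\textbf{Proof proposal for Theorem~\ref{thm:global}.}

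The plan is to argue by contradiction and reduce the uniform statement to the asymptotic one-dimensional symmetry already established in Theorem~\ref{thm:DGgeneral}, using standard parabolic compactness. The key structural observation is that a one-dimensional function $\psi(x)=\Psi(x\cdot e)$ has Hessian $D^2\psi(x)=\Psi''(x\cdot e)\,e\,e^T$, which is of rank at most one; consequently $\sigma_k(D^2\psi(x))=0$ for every $k\ge 2$ and every $x\in\R^N$. Thus the only thing to rule out is a loss of this rank-one structure along some divergent sequence of points.

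Fix $k\in\{2,\dots,N\}$ and suppose, aiming at a contradiction, that the conclusion fails. Then there exist $\eta>0$ and sequences $t_n\to+\infty$ in $\R^+$ and $x_n\in\R^N$ such that
\[
\bigl|\sigma_k\bigl(D^2u(t_n,x_n)\bigr)\bigr|\ge\eta\fall n\in\N.
\]
Define the translates $u_n(t,x):=u(t+t_n,x+x_n)$. Each $u_n$ is a bounded solution of~\eqref{homo} on $[-t_n/2,+\infty)\times\R^N$, with values in $[0,1]$. Since $f$ is Lipschitz on $\R$, standard interior parabolic Schauder estimates provide a uniform bound for $u_n$ in $C^{2+\alpha,1+\alpha/2}_{\loc}(\R\times\R^N)$ for some $\alpha\in(0,1)$ (the bounds on $[-M,M]\times B_M$ hold as soon as $t_n\ge M+1$).

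Thus, up to extracting a subsequence, $u_n$ converges in $C^{2,1}_{\loc}(\R\times\R^N)$ to some entire solution $\tilde u$ of~\eqref{homo} on $\R\times\R^N$. In particular, the time-slice profile $\psi:=\tilde u(0,\cdot)$ belongs to $\Omega(u)$, since it is the locally uniform limit of $u(t_n,x_n+\cdot)$. By Theorem~\ref{thm:DGgeneral}, $\psi$ is one-dimensional: there exist $e\in\Sph$ and $\Psi\in C^2(\R)$ with $\psi(x)=\Psi(x\cdot e)$ for all $x\in\R^N$. Therefore $D^2\psi(0)=\Psi''(0)\,e\,e^T$ has rank $\le 1$, and in particular it has at least $N-1$ zero eigenvalues, which forces
\[
\sigma_k\bigl(D^2\psi(0)\bigr)=0\fall k\ge 2,
\]
since every product of $k\ge 2$ eigenvalues among $N$ quantities of which at most one is nonzero must vanish. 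On the other hand, the $C^{2,1}_{\loc}$ convergence $u_n\to\tilde u$ yields $D^2u_n(0,0)\to D^2\tilde u(0,0)=D^2\psi(0)$, and the continuity of $\sigma_k$ on symmetric matrices gives
\[
\sigma_k\bigl(D^2u(t_n,x_n)\bigr)=\sigma_k\bigl(D^2u_n(0,0)\bigr)\longrightarrow\sigma_k\bigl(D^2\psi(0)\bigr)=0,
\]
contradicting $|\sigma_k(D^2u(t_n,x_n))|\ge\eta$ for all $n$.

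The main conceptual step is really the rank-one observation, and the only mildly technical point is the upgrade from the $L^\infty_{\loc}$-convergence in the definition of $\Omega(u)$ to $C^2_{\loc}$-convergence. This upgrade is routine: because $u$ is uniformly bounded and $f$ is Lipschitz on $\R$ (after the extension by $0$ discussed in the introduction), interior parabolic regularity provides uniform local $C^{2+\alpha,1+\alpha/2}$ bounds on backward-in-time neighborhoods of $(t_n,x_n)$, so no further input beyond Theorem~\ref{thm:DGgeneral} is required. I do not foresee any real obstacle, as this argument is essentially a soft compactness-plus-contradiction passage from the qualitative asymptotic symmetry of Theorem~\ref{thm:DGgeneral} to its uniform quantitative counterpart for all $k\ge 2$.
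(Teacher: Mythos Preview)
Your proof is correct and follows essentially the same approach as the paper: argue by contradiction, use parabolic estimates to upgrade the $L^\infty_{\loc}$ convergence defining $\Omega(u)$ to $C^2_{\loc}$ convergence, apply Theorem~\ref{thm:DGgeneral} to conclude that the limit $\psi$ is one-dimensional hence has Hessian of rank at most one, and use the continuity of $\sigma_k$ (which the paper phrases by noting it is a polynomial in the Hessian entries via the characteristic polynomial) to reach the contradiction. The only cosmetic difference is that you pass through the full space-time entire limit $\tilde u$, whereas the paper works directly with the spatial profiles $u(t_n,x_n+\cdot)$; this changes nothing.
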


The proof of Theorem~\ref{thm:global} is based on the asymptotic local one-dimensional symmetry given in Theorem~\ref{thm:DGgeneral}, and on standard parabolic estimates. We point out that, if $\psi:\R^N\to\R$ is of class $C^2(\R^N)$ and one-dimensional, then $\sigma_k(D^2\psi(x))=0$ for all $2\le k\le N$ and $x\in\R^N$, since the quantities $\sigma_k(D^2\psi(x))$ involve sums of products of at least two eigenvalues of $D^2\psi(x)$ (but $\sigma_1(D^2\psi(x))\neq0$ in general). However, the converse property is immediately not true (for instance, the function $\psi:(x_1,x_2)\mapsto x_1^2+x_2$ satisfies $\sigma_2(D^2\psi(x_1,x_2))=0$ for all $(x_1,x_2)\in\R^2$, but it is not one-dimensional).

Once the asymptotic one-dimensional symmetry and monotonicity properties are established, it is natural to ask what are the directions in which the solution actually becomes locally one-dimensional. Namely, we investigate the~set
\Fi{E}
\begin{split}
\mc{E}:=\big\{\, & e\in\Sph\, :\, \exists\,\psi\in\O(u)\text{ such that $\,\psi(x)\equiv\Psi(x\.e)$}\\
& \text{for some strictly decreasing function $\Psi\in C^2(\R)$}\big\}.
\end{split}
\Ff
Under the assumptions of Theorems~\ref{thm:DG} or~\ref{thm:DGgeneral}, the set $\mc{E}$ is then the set of the directions of  decreasing monotonicity of all non-constant elements of $\Omega(u)$.\footnote{By the direction of decreasing monotonicity of a --necessarily one-dimensional by Theorems~\ref{thm:DG} or~\ref{thm:DGgeneral}-- non-constant function $\psi\in\Omega(u)$, we mean the unique $e\in\Sph$ such that $\psi(x)=\Psi(x\cdot e)$ for all $x\in\R^N$, with $\Psi$ decreasing.} Observe that the constant functions $\psi$ are excluded in the above definition, which is necessary because they are one-dimensional in every direction. Thus, a direction~$e$ belongs to~$\mc{E}$ only if, along diverging sequences of times, the solution flattens in the directions orthogonal to~$e$ but not in the direction $e$, along some sequence of points. We characterize the set $\mc{E}$ in terms of the initial support $U$.

\begin{theorem}\label{thm:E}
Let $u$ be as in Theorem~$\ref{thm:DGgeneral}$. Then the set $\mc{E}$ defined in~\eqref{E} is given by
\[
\begin{split}
\mc{E}=\Big\{ \, & e\in\Sph\ :\  \displaystyle\frac{x_n-\xi_n}{|x_n-\xi_n|}\to e\ \hbox{as $n\to+\infty$, for some sequences $(x_n)_{n\in\N}$, $\seq{\xi}$ in $\R^N$}\\
&\text{such that $\,\dist(x_n,U)\to+\infty$ as $n\to+\infty$ and $\xi_n\in \pi_{x_n}$ for all $n\in\N$}\Big\}.
\end{split}
\]
In particular, $\mc{E}=\emptyset$ if and only if $U$ is relatively dense in~$\R^N$ or $U=\emptyset$.
\end{theorem}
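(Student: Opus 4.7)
I denote by $\mc{F}$ the set on the right-hand side of the claimed equality, so the theorem reads $\mc{E}=\mc{F}$.

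\textbf{The inclusion $\mc{E}\subseteq\mc{F}$.} Fix $e\in\mc{E}$, witnessed by sequences $(t_n)$, $(x_n)$ and a limit $\psi\in\Omega(u)$ with $\psi(x)=\Psi(x\cdot e)$, $\Psi$ strictly decreasing. I would first show $\dist(x_n,U)\to+\infty$ up to extraction: otherwise Proposition~\ref{pro:uniformspreading} would force $u(t_n,x_n+\cdot)\to 1$ locally uniformly, contradicting strict monotonicity of $\Psi$. Since $\ol U$ is closed, $\pi_{x_n}$ is nonempty; fixing $\xi_n\in\pi_{x_n}$ and setting $e_n:=(x_n-\xi_n)/|x_n-\xi_n|$, by compactness of $\Sph$ I may extract a further subsequence with $e_n\to e^*\in\Sph$. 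Then $e\in\mc{F}$ follows once I establish $e^*=e$.

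The core step is to identify $\psi$ as a planar Fisher-KPP traveling front with direction $e^*$. By~\eqref{ballcone}, the angle between $x_n-\xi_n$ and any half-line from $\xi_n$ into $U$ approaches at least $\pi/2$ uniformly as $\dist(x_n,U)\to+\infty$, so near $\xi_n$ the set $U$ lies asymptotically in the half-space $H_n:=\{y:(y-\xi_n)\cdot e_n\le 0\}$. Combined with~\eqref{UUdelta}, which supplies a ball of fixed radius $\delta$ within bounded distance of every point of $U$, this would let me sandwich $u(t_n,x_n+\cdot)$ between a subsolution generated by large balls inside $U$ and a supersolution built from the planar KPP front in direction $e_n$, using Lemma~\ref{lem:super} applied to a truncation of $U$. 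Passing to the limit and invoking the Liouville-type rigidity of~\cite{BH1} then pins $\psi$ down to $\Phi(x\cdot e^*)$ for the one-dimensional KPP traveling wave profile $\Phi$. Uniqueness of the direction of a strictly decreasing one-dimensional function then forces $e^*=e$.

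\textbf{The inclusion $\mc{F}\subseteq\mc{E}$.} Given $e\in\mc{F}$ with sequences $(x_n,\xi_n)$ as in its definition, I would pick $t_n>0$ such that $u(t_n,x_n)=1/2$. Existence follows by continuity of $t\mapsto u(t,x_n)$ since $u(0,x_n)=0$ (as $x_n\notin U$ eventually) and $u(t,x_n)\to 1$ as $t\to+\infty$ by standard Fisher-KPP spreading from the nonempty $U$. The Gaussian upper bound $u(t,x)\le e^{f'(0)t}\int_U G(t,x-y)\,dy$ coming from $f(s)\le f'(0)s$ shows that $u(t,x_n)\to 0$ uniformly on bounded time intervals when $\dist(x_n,U)\to+\infty$, hence $t_n\to+\infty$. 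Along a subsequence $u(t_n,x_n+\cdot)\to\psi\in\Omega(u)$ with $\psi(0)=1/2$, and the same half-space comparison and Liouville argument as above identify $\psi(x)=\Phi(x\cdot e)$, strictly decreasing, so $e\in\mc{E}$.

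\textbf{Last assertion and main obstacle.} For the final equivalence, $\mc{F}=\emptyset$ iff either $U=\emptyset$ (so every $\pi_{x_n}$ is empty) or no sequence $(x_n)$ has $\dist(x_n,U)\to+\infty$, that is, $U$ is relatively dense in $\R^N$. The main difficulty in both inclusions is the planar-front identification step: converting the asymptotic geometric condition~\eqref{ballcone} into a quantitative sandwich between $u(t_n,x_n+\cdot)$ and the one-dimensional KPP front, uniformly on growing neighborhoods of $x_n$, via the retracting supersolution of Lemma~\ref{lem:super} applied to a time-dependent truncation of $U$, and then closing the argument through the Liouville-type rigidity of~\cite{BH1}.
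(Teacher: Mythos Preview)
Your proposal has a genuine gap at the step you yourself flag as the ``main difficulty'': the identification of the limit $\psi$ as a translate of the planar KPP front profile $\Phi$ via the Liouville result of~\cite{BH1}. That Liouville theorem requires the entire solution to be trapped between two finite shifts of the \emph{same} planar front, and nothing in your sketch produces such a two-sided planar sandwich; the lower barrier you propose (coming from balls inside $U$) is not planar, and condition~\eqref{ballcone} only gives an \emph{asymptotic} half-space containment, not a uniform one. In fact, the claim that every nonconstant $\psi\in\Omega(u)$ is a shift of the critical front $\varphi$ is precisely the open Conjecture stated at the end of the paper, so your argument cannot close as written.

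The paper avoids this difficulty entirely by relying on Theorem~\ref{thm:psi}, which was proved by the Jones reflection argument applied to the truncated solutions from Lemma~\ref{lem:approximation} (not by a Liouville sandwich). For the inclusion $\mc{E}\subset\mc{F}$, Theorem~\ref{thm:psi} does all the work: if $\psi$ is nonconstant then cases~$(i)$--$(ii)$ are excluded, so $\dist(x_n,U)/t_n\to c^*$, and case~$(iii)$ forces any subsequential limit of $(x_n-\xi_n)/|x_n-\xi_n|$ to be the direction of strict decrease of $\psi$, namely~$e$. No identification of $\psi$ with $\varphi$ is needed. For the inclusion $\mc{F}\subset\mc{E}$, the paper does \emph{not} pick $t_n$ with $u(t_n,x_n)=1/2$; instead it runs an iterative ``push-forward'' argument along the segments $[\xi_n,x_n]$ to produce points $y_k$ on these segments and times $t_k\to+\infty$ at which $\partial_{e_{n_k}}u(t_k,y_k)\le-\varepsilon$, and then applies Theorem~\ref{thm:psi}$(iii)$ at $(t_k,y_k)$ (noting that $\xi_{n_k}\in\pi_{y_k}$ and $(y_k-\xi_{n_k})/|y_k-\xi_{n_k}|=e_{n_k}\to e$).

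That said, your intermediate-value idea for ``$\supset$'' is a legitimate shortcut once you replace the Liouville step by Theorem~\ref{thm:psi}: choosing $t_n$ with $u(t_n,x_n)=1/2$, showing $t_n\to+\infty$ via the Gaussian bound, and extracting $\psi\in\Omega(u)$ with $\psi(0)=1/2$ immediately rules out cases~$(i)$--$(ii)$ of Theorem~\ref{thm:psi}; case~$(iii)$ then gives $\psi(x)=\Psi(x\cdot e)$ with $\Psi$ nonincreasing and $\Psi(0)=1/2$, hence strictly decreasing. This is arguably cleaner than the paper's iterative argument, but it only works because Theorem~\ref{thm:psi} is already in hand.
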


We remark that, without the assumption~\eqref{UUdelta}, the last statement of \thm{E} may~fail. Indeed, if $U=\{0\}$ then $u(t,x)\equiv0$ for all $t>0$, $x\in\R^N$, hence $\mc{E}=\emptyset$, but~$U\neq\emptyset$ is not relatively dense in $\R^N$.

When $U$ is bounded with non-empty interior, it follows from Theorem~\ref{thm:E} that $\mc{E}=\Sph$. On the one hand, this conclusion gives an additional property --namely the strict monotonicity-- with respect to the result contained in~\cite{J}. On the other hand, still when $U$ is bounded, the same conclusion is also a consequence of~\cite{D,RRR}, where it is proved by a completely different argument. The characterization of the directions of asymptotic strict monotonicity in the case of unbounded sets $U$ is more involved. The proof of Theorem~\ref{thm:E} is based on an argument by contradiction and on the acceleration of the solutions when they become less and less steep.

\thm{E} implies that if~$U$~is of class~$C^1$ then~$\mc{E}$ is contained in the closure of the set of the outward unit normal vectors~to~$U$. If $U$ is convex then $\mc{E}$ coincides with the closure of the set of outward unit normal vectors to all half-spaces containing $U$. When $U$ is the subgraph of a function $\gamma$ with vanishing global mean, i.e.~satisfying~\eqref{VGM}, then we show that~$\mc{E}=\{\mathrm{e}_N\}$, where $\mathrm{e}_N:=(0,\cdots,0,1)$. Namely, in such a case we have the following.

\begin{corollary}\label{cor:DGVGM}
Let $u$ be the solution of~\eqref{homo} with an initial datum $u_0=\1_{U}$, where $U$ is given by~\eqref{Usub} with $\gamma\in L^\infty_{loc}(\R^{N-1})$ satisfying~\eqref{VGM}. Then any function $\psi\in\Omega(u)$ is of the form $\psi(x',x_N)\equiv\Psi(x_N)$ for all $(x',x_N)\in\R^{N-1}\times\R$, with $\Psi\in C^2(\R)$ either constant or strictly decreasing. Moreover, it holds that $\mc{E}=\{\mathrm{e}_N\}$.
\end{corollary}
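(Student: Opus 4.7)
The plan is to derive Corollary~\ref{cor:DGVGM} from Theorems~\ref{thm:DGgeneral} and~\ref{thm:E} applied to the subgraph $U$ in~\eqref{Usub}. The three tasks are: verify the geometric hypotheses \eqref{UUdelta} and \eqref{ballcone}, identify the set $\mc{E}$, and pin down the sign of the monotonicity.

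The crucial preliminary step, from which both \eqref{ballcone} and the characterization of $\mc{E}$ will follow and which does not itself use these hypotheses, is purely geometric: \emph{for every sequence $(x_n)$ with $\dist(x_n,U)\to+\infty$ and every $\xi_n\in\pi_{x_n}$, the unit vector $(x_n-\xi_n)/|x_n-\xi_n|$ converges to $e_N$.} Indeed, $x_n$ must lie above the graph of $\gamma$ and $\xi_n$ on it, so testing the projection against the vertical candidate $(x_n',\gamma(x_n'))\in U$ gives $|x_n-\xi_n|^2\le((x_n)_N-\gamma(x_n'))^2$ which, combined with the identity $|x_n-\xi_n|^2=|x_n'-\xi_n'|^2+((x_n)_N-\gamma(\xi_n'))^2$, yields
\[
|x_n'-\xi_n'|^2\le(\gamma(\xi_n')-\gamma(x_n'))\big(2(x_n)_N-\gamma(x_n')-\gamma(\xi_n')\big)\le 2\,|\gamma(x_n')-\gamma(\xi_n')|\,|x_n-\xi_n|.
\]
Dividing by $|x_n-\xi_n|^2$ and splitting the cases $|x_n'-\xi_n'|$ bounded (the left-hand side tends to $0$ because $|x_n-\xi_n|\to+\infty$) versus $|x_n'-\xi_n'|\to+\infty$ (in which case \eqref{VGM} gives $|\gamma(x_n')-\gamma(\xi_n')|/|x_n'-\xi_n'|\to 0$, and $|x_n'-\xi_n'|\le|x_n-\xi_n|$ controls the right-hand side), the horizontal part of the unit vector vanishes; the vertical part, positive since $(x_n)_N>\gamma(\xi_n')$, must therefore tend to $1$.

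Condition \eqref{UUdelta} then follows from a routine argument: \eqref{VGM} together with $\gamma\in L^\infty_{\loc}$ gives a uniform bound on the oscillation of $\gamma$ on balls of radius $\delta$ (compare two points $y,z\in B_\delta(x')$ through an auxiliary point $w=x'+(R_0+\delta)\hat v$, chosen far enough that \eqref{VGM} applies with slope $\le 1$ on both distances), so translating any point of $U$ downward by a fixed amount in $x_N$ produces a point of $U_\delta$. For \eqref{ballcone}, given $x$ with $\dist(x,U)=R$ large, $\xi\in\pi_x$ and $y\in U\setminus\{\xi\}$, the universal projection inequality $(x-\xi)\.(y-\xi)\le\tfrac12|y-\xi|^2$ already bounds the cosine by $|y-\xi|/(2R)$, which handles the range $|y-\xi|\le R^{1/2}$. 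When $|y-\xi|\ge R^{1/2}$, use $y_N-\xi_N\le\gamma(y')-\gamma(\xi')$: either $|y'-\xi'|\ge|y-\xi|/2$ and \eqref{VGM} forces $(y_N-\xi_N)/|y-\xi|$ to be small, or $|y_N-\xi_N|\ge\sqrt{3}\,|y-\xi|/2$ is large while bounded above by the local oscillation of $\gamma$, so $y_N-\xi_N$ is large and negative. Combined with the preliminary observation that $(x-\xi)/|x-\xi|\to e_N$ uniformly as $R\to+\infty$, this gives $\mc{O}(x)\to\ell\le 0$.

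With the hypotheses verified, Theorem~\ref{thm:DGgeneral} yields that every $\psi\in\Omega(u)$ is one-dimensional and either constant or strictly monotone. Theorem~\ref{thm:E} identifies every element of $\mc{E}$ as a limit $(x_n-\xi_n)/|x_n-\xi_n|$ along sequences as above, which by the preliminary step must be $e_N$, so $\mc{E}\subseteq\{e_N\}$. Because $\gamma$ grows sublinearly (consequence of \eqref{VGM}), points $(x',x_N)$ with $x_N$ large enough lie arbitrarily far from $U$, so $U$ is not relatively dense in $\R^N$, and Theorem~\ref{thm:E} gives $\mc{E}\ne\emptyset$; hence $\mc{E}=\{e_N\}$. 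Finally, $u_0=\1_U$ is nonincreasing in $x_N$ because $U$ is a subgraph in that direction, so by the parabolic maximum principle the same holds for $u(t,\cdot)$ and for every $\psi\in\Omega(u)$; this rules out strictly increasing one-dimensional profiles and forces any non-constant $\psi$ to be of the form $\Psi(x_N)$ with $\Psi\in C^2(\R)$ strictly decreasing. The main technical hurdle I expect is the case analysis for \eqref{ballcone} in the intermediate regime $|y-\xi|\sim R$, where one has to combine the subgraph shape and \eqref{VGM} to rule out positive limiting cosines; the remaining arguments are essentially translations of the general results into the geometry at hand.
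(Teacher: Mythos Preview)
Your proposal is correct and follows the same overall strategy as the paper: verify \eqref{UUdelta} and \eqref{ballcone}, establish that the normalized projection direction $(x_n-\xi_n)/|x_n-\xi_n|$ converges to $\mathrm{e}_N$, and then invoke Theorems~\ref{thm:DGgeneral} and~\ref{thm:E}.

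A few remarks on the comparison. First, the paper proves an auxiliary lemma (Lemma~\ref{lem:vgm}) giving the uniform Lipschitz-type bound $|\gamma(x')-\gamma(y')|\le M(|x'-y'|+1)$ and its consequence $|\xi_N-\gamma(\xi')|\le M$ for $\xi\in\partial U$; you implicitly use both facts, but you assume $\xi_n$ lies exactly on the graph (i.e.\ $(\xi_n)_N=\gamma(\xi_n')$), which is not guaranteed for general $\gamma\in L^\infty_{loc}(\R^{N-1})$ and should be replaced by this $O(1)$ correction. Second, your case analysis for \eqref{ballcone} splits according to whether $|y'-\xi'|\ge|y-\xi|/2$ or not, whereas the paper uses the thresholds $|y-\xi|\gtrless\sqrt{R}$ and then $|y'-\xi'|\gtrless\sqrt[3]{R}$; both decompositions work, and your ``intermediate regime'' concern is handled by the same mechanism (the sublinear growth of $\gamma$ controls $(y_N-\xi_N)/|y-\xi|$ either via \eqref{VGM} when $|y'-\xi'|$ is large or via the uniform bound when it is not). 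Third, your final monotonicity argument---that $u_0$ nonincreasing in $x_N$ propagates by the maximum principle---is valid, but actually redundant: once $\mc{E}=\{\mathrm{e}_N\}$, any non-constant $\psi\in\Omega(u)$ is, by Theorem~\ref{thm:DGgeneral} and the very definition~\eqref{E}, automatically of the form $\Psi(x_N)$ with $\Psi$ strictly decreasing (replacing $e$ by $-e$ if needed puts the direction of decrease in $\mc{E}$). The paper relies on this route, via Theorem~\ref{thm:psi}$(iii)$, rather than on the comparison argument.
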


Since, by parabolic estimates, the convergence in the definition~\eqref{def:Omega} of the $\O$-limit set holds true in $C^2_{loc}(\R^N)$, up to subsequences, Corollary~\ref{cor:DGVGM} implies that
$$\nabla_{\!x'} u(t,x',x_N)\to0\as t\to+\infty,\ \text{ uniformly with respect to $(x',x_N)\in\R^{N-1}\times\R$}.$$
A way to interpret this result is that the oscillations of the initial datum are ``damped'' as time goes~on through some kind of averaging process. We point out that Corollary~\ref{cor:DGVGM} does not imply the existence of a function $\Psi:\R^+\times\R\to\R$ such that $u(t,x',x_N)-\Psi(t,x_N)\to0$ as $t\to+\infty$ uniformly in $(x',x_N)\in\R^{N-1}\times\R$, and indeed such a function $\Psi$ does not exist in general (as shown in~\cite{RR2} when $N=2$ and the limits $\lim_{x'\to\pm\infty}\gamma(x')$ exist but do not coincide). Condition~\eqref{VGM} is satisfied in particular when $\gamma$ is bounded, and in such a case the conclusion of Corollary~\ref{cor:DGVGM} can also be deduced from~\cite{BH1,B,HNRR,L,U1}.

It is possible to relax the uniform mean condition~\eqref{VGM} of $\gamma$ in Corollary~\ref{cor:DGVGM}, at the price of restricting the $\O$-limit set. With this regard, we will derive a {\em directional} asymptotic symmetry result, \thm{DGe} below.

\hfill\break
\noindent{\bf{Outline of the paper.}}  The rest of the paper is organized as follows. In Section~\ref{sec:preliminary}, we show a general uniform spreading result, which is itself based especially on the construction of retracting super-solutions as finite sums of planar fronts. Section~\ref{sec:a1D} is the central section, devoted to the proofs of Theorems~\ref{thm:DG},~\ref{thm:DGgeneral} and~\ref{thm:global} on the asymptotic one-dimensional symmetry of the solutions under the general hypotheses~\eqref{UUdelta} and~\eqref{ballcone}. Some counterexamples to the main results, when at least one of these assumptions is not fulfilled, are also shown. Section~\ref{sec5} contains the proof of Theorem~\ref{thm:E} on the set of directions of asymptotic strict monotonicity. The case of a subgraph with vanishing global mean is dealt with in Section~\ref{sec6}, where Corollary~\ref{cor:DGVGM} is proved. The case when $U$ is only assumed to be included into a non-coercive subgraph is considered in Section~\ref{sec7}, where the notion of directional $\Omega$-limit set is introduced. Lastly, some extensions, as well as some open questions and conjectures, are presented in Section~\ref{secgeneral}.


\section{A uniform spreading speed result}\label{sec:preliminary}

It is well known since~\cite{KPP} that, for equation~\eqref{homo}, propagation occurs with an {\em asymptotic speed of spreading} equal to~$c^*:=2\sqrt{f'(0)}$, and that the latter coincides with the minimal speed $c$ of {\em traveling fronts}, i.e., solutions of the type
\be\label{eqvp}
u(t,x)=\vp(x\cdot e-ct),\ \ 0=\vp(+\infty)<\vp<\vp(-\infty)=1,\ \ c\in\R,\ \ e\in\Sph,
\ee
where $\vp:\R\to(0,1)$ is of class $C^2(\R)$ and decreasing. The precise result is derived in~\cite{AW} and asserts that, for any solution $u$ to~\eqref{homo} with an  initial condition $0\le u_0\le 1$ which is compactly supported and fulfills $\inf_B u_0>0$ for some ball $B\subset\R^N$ with positive measure, it holds that
\be\label{c<c*}
\forall\,c\in(0,c^*),\quad\inf_{|x|\le ct}u(t,x)\to1\as t\to+\infty,
\ee
\Fi{c>c*}
\forall\,c>c^*,\quad\sup_{|x|\geq ct}u(t,x)\to0\as t\to+\infty,
\Ff
with $c^*=2\sqrt{f'(0)}$.

In the sequel, we will need the following uniform version of the above properties for initial data of the form $u_0=\1_U$ with $U$ unbounded. 

\begin{proposition}\label{pro:uniformspreading}
Let $u$ be a solution of~\eqref{homo} emerging from an initial datum $u_0=\1_U$ with $U\subset\R^N$. Then, for any $\delta>0$ such that $U_\delta\neq\emptyset$, the following convergences hold:
\begin{align}
\label{c<c*uniform}
\forall\,c\in(0,c^*), \quad & \ \displaystyle\inf_{x\in\R^N,\,\dist(x,U_\delta)\leq ct} u(t,x)\to1\as t\to+\infty,\\
\label{c>c*uniform}
\forall\,c>c^*, \quad & \ \displaystyle\sup_{x\in\R^N,\,\dist(x,U)\geq ct}u(t,x)\to0\as t\to+\infty.
\end{align}
\end{proposition}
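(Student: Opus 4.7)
For the lower bound \eqref{c<c*uniform}, my plan is to compare $u$ from below with the solutions of~\eqref{homo} issuing from balls centered in $U_\delta$. Let $v$ denote the solution of~\eqref{homo} with initial datum $\mathbbm{1}_{B_\delta(0)}$. For every $x_0\in U_\delta$ the inclusion $B_\delta(x_0)\subset U$, combined with translation invariance of~\eqref{homo} and the maximum principle, yields $u(t,x)\geq v(t,x-x_0)$ on $(0,+\infty)\times\R^N$. Picking $c'\in(c,c^*)$ and applying the classical spreading result~\eqref{c<c*} to $v$ produces, for each $\varepsilon>0$, a time $T=T(\delta,c',\varepsilon)$ independent of $x_0$ such that $v(t,y)\geq 1-\varepsilon$ whenever $t\geq T$ and $|y|\leq c't$. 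For any $x$ with $\dist(x,U_\delta)\leq ct$, one chooses $x_0\in U_\delta$ with $|x-x_0|<ct+1\leq c't$ for $t$ large (depending only on $c$ and $c'$) and concludes $u(t,x)\geq 1-\varepsilon$, whence~\eqref{c<c*uniform} after passing to the infimum in $x$ and sending $\varepsilon\to 0$.

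For the upper bound~\eqref{c>c*uniform}, my plan is to linearize at~$0$ and use a Gaussian tail estimate on the heat kernel. The KPP assumption that $s\mapsto f(s)/s$ is nonincreasing on $(0,1]$ gives $f(s)\leq f'(0)s$ on $[0,1]$, so $u$ is a bounded subsolution of the linear equation $\partial_t\tilde u=\Delta\tilde u+f'(0)\tilde u$. Letting $\tilde u$ be the corresponding bounded solution with initial datum $\mathbbm{1}_U$, the comparison $u\leq\tilde u$ follows from the maximum principle applied to $e^{-f'(0)t}(\tilde u-u)$, which is bounded and a supersolution of the heat equation with nonnegative initial data. Explicitly
\[
\tilde u(t,x)=e^{f'(0)t}\int_U G_t(x-y)\,dy,\qquad G_t(z):=(4\pi t)^{-N/2}e^{-|z|^2/(4t)},
\]
and the hypothesis $\dist(x,U)\geq ct$ forces $U\subset\R^N\setminus B_{ct}(x)$. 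A direct Gaussian tail bound (using $|z|^2\geq (ct)^2+2\,ct\,(|z|-ct)$ for $|z|\geq ct$ and polar coordinates) then produces a constant $C_N$ with
\[
\sup_{x\in\R^N,\,\dist(x,U)\geq ct}u(t,x)\;\leq\;C_N\,t^{N/2-1}\,e^{(f'(0)-c^2/4)t}\quad\text{for $t$ large,}
\]
and this right-hand side tends to~$0$ as $t\to+\infty$ because $c>c^*=2\sqrt{f'(0)}$ forces $c^2/4>f'(0)$.

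The main subtlety, and the reason~\eqref{c>c*} cannot be invoked directly, is that $U$ may be unbounded, so mass arriving at $x$ from far-away portions of $U$ must also be accounted for; the linearization above handles this automatically, since the tail integral of the heat kernel depends only on $\dist(x,U)$ and not on the shape of $U$. A more geometric route, consistent with the section plan, would be to construct a retracting supersolution of the form $\min\bigl\{1,\sum_{j=1}^m\varphi_{c'}(y\cdot e_j-\xi_j-c't)\bigr\}$ centered around the point $x$ of interest, with $c'\in(c^*,c)$, a dense family $\{e_j\}\subset\Sph$, and constants $\xi_j$ chosen so that the initial datum dominates $\mathbbm{1}_U$ and the value at $x$ is bounded by $m\,\varphi_{c'}((c-c')t/2)\to 0$; the KPP subadditivity $f(a+b)\leq f(a)+f(b)$ on $[0,2]$ ensures that such a capped superposition is a supersolution, which is the content of the upcoming Lemma~\ref{lem:super}.
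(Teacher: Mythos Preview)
Your proof of~\eqref{c<c*uniform} is essentially identical to the paper's: comparison with translates of the solution emerging from $\1_{B_\delta}$, followed by the classical spreading result~\eqref{c<c*}. (The auxiliary speed $c'$ is not strictly needed, since $U_\delta$ is closed and the distance is attained, but it does no harm.)

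For~\eqref{c>c*uniform} you take a genuinely different route. The paper proves this by invoking Lemma~\ref{lem:super}: given $c>c^*$ and $\lambda>0$, it picks $c'\in(c^*,c)$, uses the retracting supersolutions $v^T$ associated with speed $c'$, and observes that for $T\geq R/(c-c')$ any point $x_0$ with $\dist(x_0,U)\geq cT$ satisfies $u_0(x_0+\cdot)\leq v^T(0,\cdot)$, whence $u(T,x_0)\leq v^T(T,0)<\lambda$. Your argument instead linearizes via $f(s)\leq f'(0)s$, compares $u$ with the explicit solution $e^{f'(0)t}\int_U G_t(x-y)\,dy$ of the linear equation, and bounds the Gaussian tail over $\R^N\setminus B_{ct}(x)$ directly. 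This is correct and in fact yields an explicit exponential rate $t^{N/2-1}e^{(f'(0)-c^2/4)t}$, which the paper's argument does not provide. The trade-off is that your linearization is specific to the KPP inequality $f(s)\leq f'(0)s$, whereas the paper's route through Lemma~\ref{lem:super} reuses a construction that is needed independently in Lemma~\ref{lem:approximation} and is closer in spirit to a comparison with nonlinear fronts. The ``geometric route'' you sketch in your final paragraph is precisely the paper's approach.
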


The reason why~\eqref{c<c*uniform} involves $U_\delta$ instead of $U$ is to neglect subsets of $U$ (such as isolated points) which do not affect the solution $u$ at positive times. The role of hypothesis~\eqref{UUdelta} in our main results is precisely that it allows us to replace $U_\delta$ with~$U$ in~\eqref{c<c*uniform}.

The uniform ``invasion property''~\eqref{c<c*uniform} will be immediately deduced from~\eqref{c<c*}. Instead, property~\eqref{c>c*uniform} does not follow from~\eqref{c>c*}. In order to prove it we construct a family of supersolutions whose upper level sets are given by the exterior of balls retracting with a speed larger, but arbitrarily close, to~$c^*$. These supersolutions will also directly be used to prove \thm{DGgeneral}. Here is their~construction.

\begin{lemma}\label{lem:super}
For any $c>c^*$ and~$\lambda>0$, there exist $R>0$ $($depending on $N,f,c$ and~$\lambda$$)$ and a family of positive functions~$(v^T)_{T>0}$ of class $C^2(\R\times\R^N)$ such that, for each~$T>0$,~$v^T$~is a supersolution to~\eqref{homo} in $\R\times\R^N$ and satisfies
\Fi{vT10}
\left\{\baa{ll}
v^T(0,x)\geq 1, & \text{for all $x$ such that }|x|\geq R+cT,\vspace{3pt}\\
v^T(t,0)<\lambda, & \text{for all }t\in[0,T].\eaa\right.
\Ff
\end{lemma}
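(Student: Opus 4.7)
The plan is to construct $v^T$ as a finite superposition of planar traveling fronts pointing in many directions of $\Sph$, multiplied by a constant factor greater than $1$. The KPP hypothesis on $f$ is precisely what converts the equation satisfied by each front (with equality) into a supersolution inequality for the sum.

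Explicitly, I would first fix once and for all an angle $\theta_0\in(0,\pi/2)$ (say $\theta_0=\pi/4$) and a finite family of unit vectors $e_1,\dots,e_M\in\Sph$ ``$\theta_0$-covering'' the sphere, i.e.~satisfying $\max_{1\le i\le M}e\.e_i\ge\cos\theta_0$ for every $e\in\Sph$; the number $M=M(N)$ depends only on~$N$. Let $\vp=\vp_c\in C^2(\R)$ be the decreasing profile of the planar front of speed $c>c^*$, so that $\vp''+c\vp'+f(\vp)=0$ with $\vp(-\infty)=1$, $\vp(+\infty)=0$. I would then choose $s_0$ large enough that both $s_0>\vp^{-1}(1/2)$ and $2M\vp(s_0)<\lambda$, and set $R:=(s_0-\vp^{-1}(1/2))/\cos\theta_0>0$. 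Finally, for every $T>0$ (and in fact independently of $T$), I define
\[
v^T(t,x):=2\sum_{i=1}^{M}\vp\big(s_0+ct-x\.e_i\big).
\]
Since each summand is a classical solution of~\eqref{homo} on $\R\times\R^N$ (a planar front of speed $c$ in direction $-e_i$), the function $v^T$ is of class $C^2(\R\times\R^N)$ and positive.

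To verify the supersolution property, I set $w_i:=\vp(s_0+ct-x\.e_i)\in(0,1)$, so $(v^T)_t-\Delta v^T=2\sum_i f(w_i)$ and it remains to establish $2\sum f(w_i)\ge f(v^T)$. If $v^T\ge 1$, the right-hand side vanishes by the extension of $f$ by $0$ outside $[0,1]$, while the left-hand side is non-negative. Otherwise, $w_i\le v^T\le1$, and the KPP monotonicity of $s\mapsto f(s)/s$ in~\eqref{fkpp} gives $f(w_i)\ge w_i\,f(v^T)/v^T$; summing over $i$ and multiplying by $2$ yields the bound. For the two estimates in~\eqref{vT10}, at $x=0$ the identity $v^T(t,0)=2M\vp(s_0+ct)$ and monotonicity of $\vp$ give $v^T(t,0)\le 2M\vp(s_0)<\lambda$ for all $t\ge 0$, in particular on $[0,T]$. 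For $|x|\ge R+cT$, choosing $i^*$ with $e_{i^*}\.(x/|x|)\ge\cos\theta_0$, the definition of $R$ yields
\[
s_0-x\.e_{i^*}\le s_0-(R+cT)\cos\theta_0=\vp^{-1}(1/2)-cT\cos\theta_0\le\vp^{-1}(1/2),
\]
so $\vp(s_0-x\.e_{i^*})\ge1/2$ and hence $v^T(0,x)\ge 2\cdot(1/2)=1$.

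The main conceptual obstacle is that no single planar front has a retracting ball as its upper-level region, since planar fronts translate at constant speed without changing shape. A purely radial ansatz $\vp(R(t)-|x|)$ generates an extra $-(N-1)|x|^{-1}\vp'$ term in the equation that has the wrong sign and blows up at $|x|=0$. Superposing finitely many planar fronts whose directions cover $\Sph$ and amplifying by a factor $>1$ circumvents both difficulties, and the KPP subadditivity $\sum f(w_i)\ge f(\sum w_i)$ (combined with the extension of $f$ by $0$) is exactly what allows the amplified superposition to serve as a supersolution whose $\{v^T\ge 1\}$-region effectively fills the exterior of a ball of radius $\sim R+c(T-t)$.
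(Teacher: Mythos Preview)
Your construction has a sign error that breaks the supersolution property. With $\vp$ the decreasing profile satisfying $\vp''+c\vp'+f(\vp)=0$, the function $w_i(t,x):=\vp(s_0+ct-x\.e_i)$ is \emph{not} a solution of~\eqref{homo}: writing $\zeta=s_0+ct-x\.e_i$ one gets $\partial_t w_i-\Delta w_i=c\vp'(\zeta)-\vp''(\zeta)=2c\vp'(\zeta)+f(\vp(\zeta))$, and since $\vp'<0$ this is strictly smaller than $f(w_i)$. Hence each $w_i$ is a strict \emph{sub}solution, and your computation $(v^T)_t-\Delta v^T=2\sum_i f(w_i)$ is off by the negative term $4c\sum_i\vp'$, which the KPP subadditivity does not absorb. (A genuine planar front of speed $c$ in direction $-e_i$ is $\vp(-x\.e_i-ct+s_0)$, with $-ct$ rather than $+ct$.)

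Flipping the sign does not rescue the argument: with $w_i=\vp(s_0-ct-x\.e_i)$ each summand is indeed a solution, but then $v^T(t,0)=2M\vp(s_0-ct)$ \emph{increases} in $t$ and exceeds $\lambda$ as soon as $ct>s_0-\vp^{-1}(\lambda/2M)$, so the bound $v^T(t,0)<\lambda$ fails on $[0,T]$ for $T$ large. A $T$-independent superposition of $c$-fronts therefore cannot satisfy both conditions in~\eqref{vT10} simultaneously. The paper resolves this by using the \emph{minimal-speed} front $\vp$ (speed $c^*$) together with a $T$-dependent time shift, $v^T(t,x)=2\sum_{e'\in\mc S}\vp(x\.e'-c^*(t-T)+R/2)$, and by choosing the angular mesh $\mc S$ fine enough that $(1-\e)c>c^*$; the gap between $c$ and $c^*$ is precisely what allows the fronts (which move outward at speed $c^*$) to start at time $0$ outside $B_{R+cT}$ while still keeping $v^T(t,0)\le 2|\mc S|\,\vp(R/2)<\lambda$ for all $t\in[0,T]$.
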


\begin{proof}
The functions $v^T$ will be constructed as the sums of finitely many positive solutions to~\eqref{homo}, hence they will be supersolutions to~\eqref{homo} due to the following standard consequence of the Fisher-KPP condition~\eqref{fkpp}:
\Fi{sumsuper}
\forall\,a,b\geq0,\quad f(a+b)\leq f(a)+f(b).
\Ff
To show the above inequality, assume to fix the ideas that $a\leq b$, with $b>0$ (otherwise the inequality trivially holds because $f(0)=0$). Then, observing that the function $f$, which we recall is extended by $0$ outside $[0,1]$, fulfills the third condition in~\eqref{fkpp} on the whole half-line $(0,+\infty)$, we~get
$$f(a+b)\leq \frac{f(b)}{b}(a+b)\leq f(a)+f(b).$$
	
Now, let $c>c^*$ and $\lambda>0$. Take $\e\in(0,1/2)$ small enough to have that $(1-\e)c>c^*$. Consider a finite subset $\mc{S}$ of the unit ball $\mathbb{S}^{N-1}$ such that
$$\forall\,e\in\mathbb{S}^{N-1},\quad\dist(e,\mc{S})\leq\e.$$
The set $\mc{S}$ only depends on $N$ and $\e$, which in turn depends on $f,c$. We define the family of positive functions~$(v^T)_{T>0}$, of class $C^2(\R\times\R^N)$, by
$$v^T(t,x):=2\sum_{e'\in\mc{S}}\vp\big(x\.e'-c^*(t-T)+R/2\big),$$
where $\vp$ is the traveling front with speed $c^*$, as in~\eqref{eqvp}, normalized by $\vp(0)=1/2$, and $R>0$ is such that $\vp(R/2)<\lambda/(2n)$, with $n$ being the number of elements of $\mc{S}$. With~this choice of $R$ (which only depends on $N,f,c,\lambda$) we have, as desired, $v^T(t,0)<\lambda$ for all $T>0$, $t\in[0,T]$. Observe that each term of the sum in the definition of $v^T$ is a positive solution to~\eqref{homo}, hence $v^T$ is a positive supersolution to~\eqref{homo} as discussed at the beginning of the proof.

It remains to check that each $v^T$ fulfills the first property in~\eqref{vT10}. Let $T>0$ and $x\in\R^N$ satisfy $|x|\geq R+cT$, and consider $e'\in\mc{S}$ such that 
$$\left|e'+\frac x{|x|}\right|\le\e.$$
We have that
$$v^T(0,x)\geq2\vp(x\.e'+c^*T+R/2).$$
Since $x\.e'\leq -|x|+\e|x|$, we deduce
\[\begin{split}
x\.e'+c^*T+R/2 &\leq (\e-1)(R+cT)+c^*T+R/2\\
&\leq (\e-1/2)R+[(\e-1)c+c^*]T,
\end{split}\]	
which is negative because $\e<1/2$ and $(1-\e)c>c^*$. It follows that $v^T(0,x)\geq2\vp(0)=1$. This concludes the proof.
\end{proof}

We can now prove the uniform spreading speed result. 

\begin{proof}[Proof of Proposition~$\ref{pro:uniformspreading}$]
We start with~\eqref{c<c*uniform}. Let $\delta>0$ be such that $U_\delta\neq\emptyset$ and let~$v$ be the solution to~\eqref{homo} emerging from the initial datum $v_0=\1_{B_\delta}$. Take $c\in(0,c^*)$. For any $x_0\in U_\delta$, we have that $u_0(x_0+\.)\geq v_0$ in $\R^N$ and therefore $u(t,x_0+x)\geq v(t,x)$ for all $t\geq0$, $x\in\R^N$ thanks to the parabolic comparison principle. Applying~\eqref{c<c*} to $v$ we deduce that
$$1\ge\inf_{x_0\in U_\delta,\, |x|\leq ct} u(t,x_0+x)\geq \inf_{|x|\leq ct}v(t,x)\to1\as t\to+\infty.$$
This is property~\eqref{c<c*uniform}.
	
Let us turn to~\eqref{c>c*uniform}. Take $c>c^*$, $c'\in(c^*,c)$,~$\lambda>0$, and consider the family of supersolutions~$(v^T)_{T>0}$ given by Lemma~\ref{lem:super}, associated with the speed $c'$. Namely, there exists $R>0$ such that they fulfill~\eqref{vT10} with $c'$ instead of $c$. For $T\geq R/(c-c')$ it holds that $cT\geq R+c'T$ and thus, if $x_0\in\R^N$ is such that $\dist(x_0,U)\geq cT$, then $\dist(x_0,U)\geq R+c'T$ and~\eqref{vT10} implies that $0\le u_0(x_0+\cdot)\leq v^{T}(0,\cdot)$ in $\R^N$, hence by comparison $0\le u(T,x_0)\leq v^{T}(T,0)<\lambda$. This shows that
$$\forall\,T\geq \frac{R}{c-c'},\quad 0\le \sup_{x_0\in\R^N,\, \dist(x_0,U)\geq cT}u(T,x_0)\leq \lambda.$$
From this, property~\eqref{c>c*uniform} follows by the arbitrariness of $c>c^*$ and $\lambda>0$.
\end{proof}


\section{Asymptotic one-dimensional symmetry}\label{sec:a1D}

We first prove our most general symmetry result, Theorem~\ref{thm:DGgeneral}, in Section~\ref{sec52}, after the proof of a preliminary approximation result in Section~\ref{sec51}. Next, we derive the proof of Theorem~\ref{thm:DG} in Section~\ref{sec53}, by showing that its hypotheses imply the ones of Theorem~\ref{thm:DGgeneral}. Lastly, Section~\ref{sec54} provides some counterexamples to the main results when the assumptions~\eqref{UUdelta} or~\eqref{ballcone} are not satisfied, while the proof of Theorem~\ref{thm:global} is carried out in Section~\ref{sec55}.


\subsection{An approximation result by truncation of the initial datum}\label{sec51}

The cornerstone of the proof of Theorem~\ref{thm:DGgeneral}, hence of the whole paper, is an approximation result. Before stating it, let us introduce some notation. We recall that $B_R(x_0)$ and $B_R$ stand for the balls in $\R^N$ of radius $R$ and center $x_0$ and $0$ respectively. A generic point~$x\in\R^N$ will sometimes be denoted by $(x',x_N)\in\R^{N-1}\times\R$, and the ball in~$\R^{N-1}$ of radius $R$ and center~$x_0'\in\R^{N-1}$ is denoted by~$B_R'(x_0')$, or just $B_R'$ if $x_0'=0$.

\begin{lemma}\label{lem:approximation}
Let $u$ be a solution to~\eqref{homo} with an initial condition  $u_0=\1_U$, where $U\subset\R^N$ satisfies, for some $\delta>0$, $L>0$, and $\sigma\in(0,c^*/2)$,
\be\label{assapprox}
U_\delta\cap B_L\neq\emptyset \quad\text{ \ and \ }\quad U \setminus (B'_L\times\R)\,\subset\,\Big\{(x',x_N)\in\R^{N-1}\times\R \ : \ x_N\leq\frac{\sigma}{2c^*}|x'|\Big\}.
\ee
Let $(u^R)_{R>0}$ be the solutions to~\eqref{homo} emerging from the initial data $(u_0^R)_{R>0}$ defined by
$$u_0^R=\1_{U\cap(B_R'\times\R)}.$$
Then, for any $\e>0$, there exists $\tau_\e>0$, only depending on $N,f,\delta,L,\sigma,\e$, such~that
\Fi{utaue}
\forall\,\tau\geq\tau_\e,\quad\big\|u(\tau,\.)-u^{3\sigma\tau}(\tau,\.)\big\|_{C^1(B'_{\sigma\tau}\times\R^+)}<\e.
\Ff
\end{lemma}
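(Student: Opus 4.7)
The plan is to control $w := u - u^R$ on $B'_{\sigma\tau}\times\R^+$ by combining an invasion estimate for $u^R$ (which will be close to $1$ near the seed $B_\delta(x^0)$ provided by $U_\delta\cap B_L\neq\emptyset$) with a spreading estimate for the solution emanating from the truncated tail (which will be close to $0$ far from its support).

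First I would set $R := 3\sigma\tau$ and introduce $\tilde u$, the solution of \eqref{homo} with initial datum $\tilde u_0 := \1_{U\setminus(B'_R\times\R)}$. The subadditivity \eqref{sumsuper} of $f$ yields that $u^R+\tilde u$ is a supersolution of \eqref{homo} with the same initial datum as $u$, so by comparison $u\le u^R+\tilde u$, and together with $u\ge u^R$ this gives $0\le w\le \tilde u$ (and also $w\le 1-u^R$). Fixing $x^0\in U_\delta\cap B_L$, so that $B_\delta(x^0)\subset U$ and hence $x^0\in U^R_\delta$ as soon as $3\sigma\tau\ge L+\delta$, I would apply \eqref{c<c*uniform} of Proposition~\ref{pro:uniformspreading} to $u^R$ with some $c_1\in(0,c^*)$ chosen close to $c^*$: this gives $u^R(\tau,x)\ge 1-\varepsilon/2$, and thus $w(\tau,x)<\varepsilon/2$, on the invaded set $\mathcal A:=\{x:|x-x^0|\le c_1\tau\}$ for all $\tau$ large.

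For $x\in (B'_{\sigma\tau}\times\R^+)\setminus\mathcal A$, the bounds $|x'-x^{0\prime}|\le\sigma\tau+L$ and $|x-x^0|>c_1\tau$ force $x_N\ge\sqrt{c_1^2-\sigma^2}\,\tau-O(L)$. The cone assumption \eqref{assapprox} puts $\supp\tilde u_0\subset\{y:|y'|\ge R,\ y_N\le\sigma|y'|/(2c^*)\}$; a direct minimization (whose critical point in $|y'|$ lies below $R$ throughout the relevant range of $x_N$) yields
\[
\dist(x,\supp\tilde u_0)^2\;\ge\;(R-|x'|)^2+\Bigl(x_N-\tfrac{\sigma R}{2c^*}\Bigr)^2\;\ge\;4\sigma^2\tau^2+\Bigl(x_N-\tfrac{3\sigma^2\tau}{2c^*}\Bigr)^2.
\]
Plugging in the lower bound on $x_N$ and using the elementary identity, with $s:=\sigma/c^*\in(0,1/2)$,
\[
4s^2+\bigl(\sqrt{1-s^2}-\tfrac{3}{2}s^2\bigr)^2\;=\;1+3s^2\bigl(1-\sqrt{1-s^2}\bigr)+\tfrac{9}{4}s^4\;>\;1,
\]
one can then, by continuity in $c_1$, pick $c_1<c^*$ close enough to $c^*$ and some $c_2>c^*$ so that $\dist(x,\supp\tilde u_0)\ge c_2\tau$ on the complement for all $\tau$ large. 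Applying \eqref{c>c*uniform} of Proposition~\ref{pro:uniformspreading} to $\tilde u$ at speed $c_2$ then yields $\tilde u(\tau,x)<\varepsilon/2$, and hence $w(\tau,x)<\varepsilon/2$, on this complement. Combining the two regions gives the required $L^\infty$ bound on $B'_{\sigma\tau}\times\R^+$ for $\tau\ge\tau_\varepsilon$.

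Finally, to upgrade to $C^1$, note that $w$ satisfies the linear parabolic equation $\partial_t w=\Delta w+b\,w$ with $b:=(f(u)-f(u^R))/w$ bounded by $\|f'\|_\infty$; rerunning the same argument on the slightly enlarged parabolic cylinder $[\tau-1,\tau]\times(B'_{\sigma\tau+1}\times\R^+)$ to obtain $L^\infty$ smallness there, standard interior $W^{2,p}$ estimates with $p>N+2$ plus Sobolev embedding promote this to the desired $C^1$ bound at time $\tau$. The hard part is the geometric step: the naive separation $\dist(x,\supp\tilde u_0)\ge R-|x'|=2\sigma\tau$ is \emph{less} than $c^*\tau$ (because $\sigma<c^*/2$), so neither spreading estimate alone suffices. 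The specific choice $R=3\sigma\tau$ combined with the cone slope $\sigma/(2c^*)$ is precisely what makes the elementary inequality above go through, and thus what closes the argument for every admissible~$\sigma$.
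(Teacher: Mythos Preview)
Your strategy coincides with the paper's: sandwich $u-u^{3\sigma\tau}$ between $0$ and the tail solution via the KPP subadditivity, split the half-cylinder into an inner part where $u^{3\sigma\tau}$ is close to $1$ by invasion from the seed ball, and an outer part where the tail is small because its initial support is at distance $>c_2\tau$ with $c_2>c^*$, then upgrade to $C^1$ by parabolic regularity. The paper carries out the distance estimate by a case split on $|y'|$ rather than your direct minimisation, and it invokes Lemma~\ref{lem:super} directly on the outer region rather than Proposition~\ref{pro:uniformspreading}, but these are differences of execution, not of idea.

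Two technical points would need tightening. First, your parenthetical claim that the critical point in $|y'|$ lies below $R$ ``throughout the relevant range of $x_N$'' is not literally true: once $x_N$ exceeds roughly $4c^*\tau$ the minimiser moves above $R$ and your displayed bound $\dist^2\ge(R-|x'|)^2+(x_N-\sigma R/(2c^*))^2$ fails. The fix is painless---$\dist(x,\supp\tilde u_0)$ is nondecreasing in $x_N$, so it suffices to check the inequality at the minimal admissible $x_N\approx\sqrt{c_1^2-\sigma^2}\,\tau$, where the critical point \emph{is} below $R$ and your identity applies. Second, when you apply~\eqref{c>c*uniform} to $\tilde u$, the initial support $U\setminus(B'_{3\sigma\tau}\times\R)$ varies with $\tau$, whereas Proposition~\ref{pro:uniformspreading} is stated for a fixed set; you need the threshold time there to be uniform over all initial supports, which holds by inspection of its proof (the constant $R$ in Lemma~\ref{lem:super} is universal), or one can sidestep the issue, as the paper does, by appealing to Lemma~\ref{lem:super} directly.
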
		

\begin{proof}
In order to get the $C^1$ estimate~\eqref{utaue}, it is sufficient to show an $L^\infty$ estimate at a later time, that is, 
\Fi{utau+1}
\forall\,\tau\geq\tau_\e,\quad\big\|u(\tau+1,\.)-u^{3\sigma\tau}(\tau+1,\.)\big\|_{L^\infty(\mc{C}_\tau)}<\e,
\Ff
where $\tau_\e>0$ depends on $N,f,\delta,L,\sigma$ and $\e$, and $\mc{C}_\tau$ is the half-cylinder
$$\mc{C}_{\tau}:={B'_{\sigma\tau}}\times\R^+={B'_{\sigma\tau}}\times(0,+\infty).$$
Indeed, once~\eqref{utau+1} is proved, observing that $u-u^{3\sigma\tau}$ is nonnegative (by the comparison principle) and it solves a parabolic equation that can be written in linear form, one infers from the parabolic Harnack inequality and interior estimates, given for instance by~\cite{Krylov}, that~\eqref{utaue} holds with $\e$ replaced by $C\e$, where $C$ only depends on $f$ and $N$. Then, to prove the lemma it is sufficient to derive~\eqref{utau+1} for an arbitrary $\e>0$.
	
Fix $\e>0$. Consider the family of solutions $(w^R)_{R>0}$ to~\eqref{homo} emerging from the initial data~$(w_0^R)_{R>0}$ given~by
$$w_0^R=\1_{W^R}\quad\text{with}\quad W^R:=\Big\{(x',x_N)\in\R^{N-1}\times\R\ : \ |x'|\geq R,\ x_N\leq\frac{\sigma}{2c^*}|x'|\Big\}.$$
By~\eqref{assapprox} it holds that $u_0\leq\min(u_0^R+w_0^R,1)$ in $\R^N$ for every $R\geq L$. Hence, since the KPP condition~\eqref{fkpp} yields~\eqref{sumsuper}, the minimum between $1$ and the sum of two solutions ranging in $[0,1]$ is a supersolution. We infer by comparison that, for $R\geq L$,
$$0\le u^R\leq u\leq\min(u^R+w^R,1)\ \hbox{ in }[0,+\infty)\times\R^N.$$		
Thus, property~\eqref{utau+1} holds for some $\tau_\e\ge L/(3\sigma)$ if we show that 
\Fi{Ctau}
\forall\,\tau\geq\tau_\e,\quad\sup_{x\in\mc{C}_\tau}\big[\min\big(u^{3\sigma\tau}(\tau+1,x)+w^{3\sigma\tau}(\tau+1,x)\,,\,1\big)-u^{3\sigma\tau}(\tau+1,x)\big]<\e.
\Ff

In order to prove \eqref{Ctau}, we consider a value $c\in(2\sigma,c^*)$, that will be specified later, and divide the half-cylinder~$\mc{C}_\tau$ into the subsets 
$$\mc{C}_\tau^i:=\big(B'_{\sigma\tau}\times\R^+\big)\cap B_{c\tau},\qquad\mc{C}_\tau^e:=\big(B'_{\sigma\tau}\times\R^+\big)\setminus B_{c\tau}.$$ 
	
Let us first deal with the set $\mc{C}_\tau^i$, with arbitrary $c\in(2\sigma,c^*)$. We want to show that $u^{3\sigma\tau}(\tau+1,\.)>1-\e$ there for $\tau$ large. By hypothesis, there exists a ball $B_\delta(x_0)\subset U$ with $|x_0|<L$, hence $B_\delta(x_0)\subset U\cap(B_{L+\delta}'\times\R)$. It follows by comparison that $u^{L+\delta}(t,x)\geq v(t,x-x_0)$ for all $t\ge0$ and $x\in\R^N$, where $v$ is the solution to~\eqref{homo} with initial datum $\1_{B_1}$. Then, applying the spreading property~\eqref{c<c*} to $v$ we infer that, for~$\tau_1>0$ large enough, depending on $N,f,c,\delta,\e$ (recall that $c^*=2\sqrt{f'(0)}$), it holds that
$$\forall\,\tau\geq\tau_1,\quad\inf_{x\in B_{\frac{c^*+c}2\,\tau}(x_0)}u^{L+\delta}(\tau+1,x)>1-\e.$$
Since the family $(u^R)_{R>0}$ is nondecreasing with respect to~$R$, by the parabolic comparison principle, up to increasing $\tau_1$ so that $3\sigma\tau_1\geq L+\delta$, we derive
$$\forall\,\tau\geq\tau_1,\quad\inf_{x\in B_{\frac{c^*+c}2\,\tau}(x_0)}u^{3\sigma\tau}(\tau+1,x)>1-\e$$
(with $\tau_1$ also depending on $L$ and $\sigma$). Finally, from the inclusions $\mc{C}_\tau^i\subset B_{c\tau}\subset B_{c\tau+L}(x_0)$, which hold for all $\tau>0$, and $B_{c\tau+L}(x_0)\subset B_{\frac{c^*+c}2\tau}(x_0)$, which holds if $\frac{c^*-c}2\tau\geq L$, we find a quantity $\tau_2>0$ depending on $N,f,c,\delta,L,\sigma,\e$ such that
$$\forall\,\tau\geq\tau_2,\quad\inf_{x\in\mc{C}_\tau^i}u^{3\sigma\tau}(\tau+1,x)\geq\inf_{x\in B_{\frac{c^*+c}2\tau}(x_0)}u^{3\sigma\tau}(\tau+1,x)>1-\e.$$
This shows that, for any choice of~$c\in(2\sigma,c^*)$, the estimate~\eqref{Ctau} holds when $\mc{C}_\tau$ is replaced by $\mc{C}_\tau^i$ and $\tau_\e$ is equal to the above quantity $\tau_2$.

Let us consider now the set $\mc{C}_\tau^e$. We want to show that $w^{3\sigma\tau}<\e$ there by estimating the distance between $\mc{C}_\tau^e$ and $W^{3\sigma\tau}$ and then applying Lemma~\ref{lem:super}. In this paragraph, $\tau>0$ is arbitrary and $c\in(2\sigma,c^*)$ will be fixed at the end of the paragraph. Take two arbitrary points $x=(x',x_N)\in\mc{C}_\tau^e$ and $y=(y',y_N)\in W^{3\sigma\tau}$. There~holds
$$|x'|<\sigma\tau\leq\frac13|y'|\quad\text{ and }\quad x_N> \sqrt{c^2-\sigma^2}\,\tau,\quad y_N\leq\frac{\sigma}{2c^*}|y'|.$$
We compute
$$|x-y|^2=|x'-y'|^2+(x_N-y_N)^2\geq\frac49|y'|^2+(x_N-y_N)^2.$$
If $\sigma|y'|/(2c^*)\geq\sqrt{c^2-\sigma^2}\,\tau$, neglecting $(x_N-y_N)^2$ in the above inequality we get
$$|x-y|^2\geq\frac{16}9(c^*)^2\Big(\frac{c^2}{\sigma^2}-1\Big)\tau^2\geq\frac{16}3(c^*)^2\tau^2$$
since $c>2\sigma>0$. Instead, in the opposite case $\sigma|y'|/(2c^*)<\sqrt{c^2-\sigma^2}\,\tau$, one has $y_N\leq\sigma|y'|/(2c^*)<\sqrt{c^2-\sigma^2}\,\tau<x_N$, whence 
\begin{align}
|x-y|^2 &\geq\frac49|y'|^2+\Big(\sqrt{c^2-\sigma^2}\,\tau-\frac{\sigma}{2c^*}|y'|\Big)^2\\
&= \frac49|y'|^2+(c^2-\sigma^2)\tau^2+\frac{\sigma^2}{4(c^*)^2}|y'|^2-\frac{\sigma}{c^*}\sqrt{c^2-\sigma^2}\,\tau|y'|,
\end{align}
and we estimate the negative terms by observing that
$$\frac49|y'|^2-\sigma^2\tau^2-\frac{\sigma}{c^*}\sqrt{c^2-\sigma^2}\,\tau|y'|\geq|y'|\Big(\frac13|y'|-\frac{\sigma}{c^*}\sqrt{c^2-\sigma^2}\,\tau\Big)\geq0$$
since $|y'|/3\ge\sigma\tau\ge\sigma\sqrt{c^2-\sigma^2}\tau/c^*$. Thus, in such case one has
$$|x-y|^2\geq c^2\tau^2+\frac{\sigma^2}{4(c^*)^2}|y'|^2\geq c^2\tau^2+\frac{9\sigma^4\tau^2}{4(c^*)^2},$$
which is larger than $(c^*)^2\tau^2$ for $c\in(2\sigma,c^*)$ close enough to $c^*$, depending on $c^*=2\sqrt{f'(0)}$ and $\sigma$ only. Summing up, we have shown the existence of some $c\in(2\sigma,c^*)$ and $c'>c^*$, depending on $c^*=2\sqrt{f'(0)}$ and $\sigma$, such that
\Fi{distW}
\forall\,\tau>0, \ \forall\,x\in\mc{C}_\tau^e,\quad\dist(x,W^{3\sigma\tau})\geq c'\tau.
\Ff
At this point, we invoke the supersolutions $(v^T)_{T>0}$ provided by Lemma~\ref{lem:super}, associated with $\t c:=(c^*+c')/2$ and $\lambda=\e$; they satisfy~\eqref{vT10} with $\t c$ instead of $c$ and a quantity~$R$ depending on~$N,f,\t c,\e$ (hence, $R$ depends on $N,f,\sigma,\e$, since $\t c$ depends on $c^*=2\sqrt{f'(0)}$ and~$c'$ and the latter depends on $c^*,\sigma$). Take $\tau_3>0$ large enough (depending on $R,\t c,c'$, hence on $N,f,\sigma,\e$) so that $R+\t c(T+1)\le c'T$ for all $T\geq\tau_3$. Thus, on one hand, $v^{T+1}(0,x)\geq1$ for~$|x|\geq c'T$ and~$T\geq\tau_3$. On the other hand, for all $\tau>0$ and $x_0\in\mc{C}_\tau^e$, we know from~\eqref{distW} that~$B_{c'\tau}(x_0)\cap W^{3\sigma\tau}=\emptyset$, which implies that $w_0^{3\sigma\tau}(x+x_0)=0$ for~$|x|<c'\tau$. This means that, for $\tau\geq\tau_3$, $w^{3\sigma\tau}(0,\.+x_0)\le v^{\tau+1}(0,\cdot)$ in $\R^N$, and thus $w^{3\sigma\tau}(t,\.+x_0)\le v^{\tau+1}(t,\cdot)$ in~$\R^N$ for all $t\ge0$ by comparison. We conclude by~\eqref{vT10} that
$$\forall\,\tau\geq\tau_3,\ \forall\,x_0\in\mc{C}_\tau^e,\quad w^{3\sigma\tau}(\tau+1,x_0)\leq v^{\tau+1}(\tau+1,0)<\e.$$
This yields that~\eqref{Ctau} holds in the set $\mc{C}_\tau^e$ too, for a suitable choice of $c$ depending on~$c^*=2\sqrt{f'(0)}$ and $\sigma$, and for all $\tau\ge\tau_3>0$ with $\tau_3$ depending on $N,f,\sigma,\e$. Therefore~\eqref{Ctau} holds true in the whole $\mc{C}_\tau$, for some $\tau_\e\ge\max(\tau_2,\tau_3)>0$ depending on~$N,f,\delta,L,\sigma,\e$. The proof of the lemma is complete.
\end{proof}


\subsection{Proof of Theorem~\ref{thm:DGgeneral}}\label{sec52}

Lemma \ref{lem:approximation} allows us to derive Theorem~\ref{thm:DGgeneral} by applying the reflection argument ``\`a~la~Jones''~\cite{J} to the solution with the truncated initial datum. This actually yields an additional information about the direction in which $u$ becomes locally one-dimensional, that will be used to prove one inclusion of the characterization of the set~$\mc{E}$ in \thm{E}. Here is the description of the $\O$-limit set that shows in particular Theorem~\ref{thm:DGgeneral}.

\begin{theorem}\label{thm:psi}
Let $u$ be a solution of~\eqref{homo} with an initial datum $u_0=\1_{U}$ such that $U\subset\R^N$ satisfies~\eqref{UUdelta} and~\eqref{ballcone}. Let $\psi\in\Omega(u)$ and let $\seq{t}$ and $\seq{x}$ be the corresponding sequences given by definition~\eqref{def:Omega}. The following properties hold:
\begin{enumerate}[$(i)$]
\item if $\ \displaystyle\liminf_{n\to+\infty}\frac{\dist(x_n,U)}{t_n}<c^*$ then $\,\psi\equiv1$;
\item if $\ \displaystyle\limsup_{n\to+\infty}\frac{\dist(x_n,U)}{t_n}>c^*$ then $\,\psi\equiv0$;
\item if $\ \displaystyle\lim_{n\to+\infty}\frac{\dist(x_n,U)}{t_n}=c^*$ and if $(\xi)_{n\in\N}$ is any sequence such that $\xi_n\in\pi_{x_n}$ for all $n\in\N$, then, up to extraction of a subsequence, 
$$\frac{x_n-\xi_n}{|x_n-\xi_n|}\to e\in\Sph\ \hbox{ as }n\to+\infty,$$
and $\psi(x)\equiv\Psi(x\.e)$ for some function~$\Psi\in C^2(\R)$ which is either constant or strictly decreasing.
\end{enumerate}
Hence, in any case, $\psi$ is one-dimensional, and it is either constant or strictly monotone.
\end{theorem}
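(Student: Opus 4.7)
The plan is to split by the three cases, each distinguished by how the spreading speed compares with the asymptotic rate $\dist(x_n,U)/t_n$. Cases (i) and (ii) reduce immediately to the uniform spreading result, Proposition~\ref{pro:uniformspreading}. For case~(i), from a subsequence with $\dist(x_n,U)/t_n\to\ell<c^*$ and hypothesis~\eqref{UUdelta}, one checks that $\dist(x_n+y,U_\delta)<c' t_n$ for some $c'<c^*$ and $y$ ranging in any fixed compact set, so~\eqref{c<c*uniform} forces $u(t_n,x_n+y)\to 1$ locally uniformly, hence $\psi\equiv1$. Case~(ii) is symmetric: if $\dist(x_n,U)/t_n\to\ell>c^*$ along a subsequence then $\dist(x_n+y,U)\geq c't_n$ with $c'>c^*$ for $n$ large and $y$ bounded, and~\eqref{c>c*uniform} yields $\psi\equiv0$.

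For case~(iii), $\dist(x_n,U)/t_n\to c^*$, the plan is to move to a frame adapted to the projections and then apply Lemma~\ref{lem:approximation}. Pick $\xi_n\in\pi_{x_n}$, set $d_n:=|x_n-\xi_n|$, $e_n:=(x_n-\xi_n)/d_n$, and (up to extracting) $e_n\to e\in\Sph$. Let $R_n$ be a rotation with $R_n\eN=e_n$, converging to some $R$ with $R\eN=e$, and put $\tilde u_n(t,y):=u(t,\xi_n+R_n y)$; this is a KPP solution with initial datum $\1_{\tilde U_n}$, $\tilde U_n:=R_n^{-1}(U-\xi_n)$. Then $\tilde u_n(t_n,d_n\eN+y)\to\tilde\psi(y):=\psi(Ry)$ locally uniformly, and it suffices to show that $\tilde\psi$ depends only on $y_N$. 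Because $\xi_n\in\pi_{x_n}$, the definition of $\mc{O}$ forces $\tilde y_N/|\tilde y|\leq\mc{O}(x_n)$ for every $\tilde y\in\tilde U_n\setminus\{0\}$; since $d_n\to+\infty$, hypothesis~\eqref{ballcone} then gives $\tilde U_n\subset\{\tilde y_N\leq(\sigma/(2c^*))|\tilde y'|\}$ for any prescribed $\sigma>0$ and $n$ large, while~\eqref{UUdelta} yields an $L>0$ independent of $n$ with $\tilde U_{n,\delta}\cap B_L\neq\emptyset$. Hence the assumptions~\eqref{assapprox} of Lemma~\ref{lem:approximation} hold, so for $\sigma,\epsilon>0$ fixed small and $n$ large, $\tilde u_n(t_n,\cdot)$ is $\epsilon$-close in $C^1(B'_{\sigma t_n}\times\R^+)$ to the solution $\tilde u_n^{3\sigma t_n}$ emerging from the truncated datum $\1_{\tilde U_n\cap(B'_{3\sigma t_n}\times\R)}$.

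The final step is the reflection argument à la Jones~\cite{J}, applied to $\tilde u_n^{R_n}$ with $R_n:=3\sigma t_n$. Its initial support lies in the horizontal cylinder $B'_{R_n}\times\R$, so for every $\nu\in\Sph$ with $\nu\.\eN=0$ and every $a>R_n$, reflection across $\{z\.\nu=a\}$ together with the parabolic maximum principle yields $\tilde u_n^{R_n}(t_n,z)\leq\tilde u_n^{R_n}(t_n,z^*)$ for $z\.\nu>a$, where $z^*=z-2(z\.\nu-a)\nu$. Combining this with the $C^1$ approximation of the previous paragraph, the freedom to take $a$ just above $R_n$, and the uniform spreading of Proposition~\ref{pro:uniformspreading} to make $\tilde u_n^{R_n}(t_n,\cdot)$ negligible at points whose distance from the truncated support exceeds $c^* t_n$, passing to the limit $n\to+\infty$ forces $\tilde\psi$ to be invariant under all horizontal translations, i.e.\ $\tilde\psi(y)\equiv\tilde\Psi(y_N)$ for some $\tilde\Psi\in C^2(\R)$. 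A Liouville-type argument in the spirit of~\cite{BH1}, applied to the entire KPP solution obtained by also translating in the time variable, then ensures that $\tilde\Psi$ is either constant or a strictly decreasing translate of the minimal-speed front profile. The main obstacle is precisely this last passage to the limit: Jones's reflection compares values across a hyperplane far from the origin, while the points of interest near $d_n\eN$ sit on the near side, so extracting honest horizontal invariance of $\tilde\psi$ requires delicately exploiting the freedom to take $\sigma$ small (so that $R_n=3\sigma t_n\ll d_n\sim c^* t_n$) together with the sharp $C^1$ control supplied by Lemma~\ref{lem:approximation}.
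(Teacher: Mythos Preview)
Your treatment of cases~(i) and~(ii) matches the paper and is fine. The setup for case~(iii) --- rotating so that $e_n\to\eN$, using~\eqref{ballcone} to get the cone containment~\eqref{Unsubset}, using~\eqref{UUdelta} to produce a ball in $\tilde U_{n,\delta}\cap B_L$, and invoking Lemma~\ref{lem:approximation} --- is also essentially what the paper does.

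The gap is in how you run the reflection. Your reflection across $\{z\.\nu=a\}$ with $\nu\perp\eN$ and $a>R_n=3\sigma t_n$ only yields monotonicity of $\tilde u_n^{R_n}(t_n,\cdot)$ in the region $\{z\.\nu>R_n\}$, i.e.\ at horizontal distance at least $3\sigma t_n$ from the axis. But the point of interest is $d_n\eN+y$ with $y$ bounded, which lies \emph{inside} that cylinder; and the $C^1$ approximation~\eqref{utaue} only holds on the even smaller cylinder $B'_{\sigma t_n}\times\R^+$. There is no region where both the reflection inequality and the approximation are simultaneously available, so nothing passes to the limit. You correctly flag this as the ``main obstacle'', but taking $\sigma$ small does not help: it shrinks both cylinders proportionally and the mismatch persists.

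The paper resolves this by a different placement of the reflecting hyperplane. It argues by contradiction: suppose $\nabla_{x'}\tilde\psi(\bar x)=\beta'\neq0$. Because $\beta'\neq0$, the line $\Gamma_n$ through $d_n\eN+\bar x$ in the direction of $\nabla\tilde u_n^{R_n}(t_n,d_n\eN+\bar x)$ (which is $\vt$-close to $\beta$ by the $C^1$ estimate) is disjoint from the half-cylinder $H_n=\ol{B'_{\vt t_n}}\times(-\infty,\vt t_n]$ containing the truncated support, for $\vt$ small. One then separates $H_n$ and $\Gamma_n$ by a hyperplane \emph{containing} $\Gamma_n$; reflecting the truncated solution across this hyperplane and applying the Hopf lemma gives a strict sign on the normal derivative at $d_n\eN+\bar x$, contradicting the fact that the gradient there is tangent to the hyperplane. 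The point is that the hyperplane is chosen to pass \emph{through} the point of interest, not far from it.

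Two smaller issues. First, for the monotonicity, the paper does not invoke a Liouville result from~\cite{BH1}; it reruns the reflection argument with the horizontal hyperplane $\{z_N=d_n+\hat x\.\eN\}$ (which does pass through the point $d_n\eN+\hat x$) to get $\partial_{x_N}\tilde\psi\leq0$, and then upgrades ``nonincreasing'' to ``constant or strictly decreasing'' via the strong maximum principle applied to the entire solution obtained by also shifting in time. Second, your claim that $\tilde\Psi$ must be a translate of the minimal-speed front profile is stronger than what is proved --- in the paper this is stated only as a conjecture.
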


\begin{proof}
Statement $(ii)$ is a direct consequence of property~\eqref{c>c*uniform}, because in such a case there is $c>c^*$ such that, for any $x\in\R^N$, $\dist(x_n+x,U)>c t_n$ for infinitely many~$n\in\N$.

In the case $(i)$, there is $c\in(0,c^*)$ such that $\dist(x_n,U)<c t_n$ for infinitely many~$n\in\N$. Hence, since $\dist(\.,U_\delta)\leq \dist(\.,U)+d_{\mc{H}}(U,U_\delta)$, for given $c'\in(c,c^*)$ it follows from hypothesis~\eqref{UUdelta} that, for any $x\in\R^N$, $\dist(x_n+x,U_\delta)<c' t_n$ for infinitely many~$n\in\N$. The~invasion property~\eqref{c<c*uniform} eventually yields that $\psi\equiv1$.

We are left with statement $(iii)$, that is, calling for short $k_n:=\dist(x_n,U)$, we now~have
\Fi{kntn}
\frac{k_n}{t_n}\to c^*\as n\to+\infty,
\Ff
In~order to show that $\psi$ is one-dimensional, we proceed in several steps: we first define some new convenient coordinate systems; next, assuming by contradiction that $\psi$ is not one-dimensional, we show that a line orthogonal to a level set of $u$ at time $t_n$ is far from a suitable half-cylinder with radius of order $t_n$, which in the following step is used to define an approximation of~$u$ through a truncation of its initial support~$U$. Then Lemma~\ref{lem:approximation} will ensure that the error in this approximation is small (this is where the geometric assumption~\eqref{ballcone} is used), and this in turn will allow us to obtain a contradiction by applying Jones' reflection argument to the solution with the truncated initial support. In the final step of the proof, we will derive the monotonicity of $\psi$ in the desired direction by using again the reflection argument together with Lemma~\ref{lem:approximation}. 
	
\medskip
\noindent{\it{Step 1: coordinates transformations.}} For $n\in\N$, let $\xi_n$ belong to the set $\pi_{x_n}$ of the projections of $x_n$  onto~$\ol U$ (i.e., $\xi_n\in\ol U$ and $|x_n-\xi_n|=k_n$). Up to extraction of a subsequence, we have by~\eqref{kntn} that~$k_n>0$ for all $n\in\N$. We~set
$$e_n:=\frac{x_n-\xi_n}{k_n}=\frac{x_n-\xi_n}{|x_n-\xi_n|}.$$
Next, we consider a family of $N\times N$ orthogonal transformations~$(M_n)_{n\in\N}$ such that $M_n(\mathrm{e}_N)=e_n$, with $\mathrm{e}_N:=(0,\cdots,0,1)$. Up to subsequences, $\seq{e}$ and $(M_n)_{n\in\N}$ converge respectively to some direction $e\in\Sph$ and some orthogonal transformation $M$, with $M(\mathrm{e}_N)=e$. We define, for $t\geq0$ and $x\in\R^N$, 
$$u_n(t,x):=u(t,\xi_n+M_n(x)).$$
These are still solutions to~\eqref{homo}, because the equation is invariant under isometry. It follows that 
\Fi{untopsi}
u_n(t_n,k_n\mathrm{e}_N+x)=u(t_n,x_n+M_n(x))\longrightarrow\psi(M(x))=:\t\psi(x)\as n\to+\infty,
\Ff
locally uniformly in $x\in\R^N$.	Moreover, $u_n(0,\.)=\1_{U_n}$ with 
$$U_n:=M_n^{-1}(U)-\{M_n^{-1}(\xi_n)\}$$
The set $U_n$ is a rigid transformation of $U$ and it is constructed in a way that $0\in\ol{U_n}$ is an orthogonal projection of $k_n\mathrm{e}_N$ onto $\ol{U_n}$, whence $\dist(k_n\mathrm{e}_N,\overline{U_n})=k_n\to+\infty$ as $n\to+\infty$ by~\eqref{kntn} and therefore the geometric assumption~\eqref{ballcone} (which, being invariant by isometries, is fulfilled by $U_n$ too) yields
\Fi{Unsubset}
U_n\subset\big\{(x',x_N)\in\R^{N-1}\times\R \ : \ x_N\leq\alpha_n|x'|\big\}\quad\text{with $\alpha_n\to0$ as $n\to+\infty$}.
\Ff 
	
\medskip
\noindent{\it{Step 2: the choice of the truncation.}} Firstly, by~interior parabolic estimates, the~$L^\infty_{loc}(\R^N)$ convergence~\eqref{untopsi} holds true in $C^2_{loc}(\R^N)$. We claim~that
\be\label{claimpsi}
\nabla_{\!x'}\t\psi\equiv0\ \hbox{ in $\R^N$},
\ee
which will yield, in the original coordinate system, 
$$\partial_{e'}\psi\equiv\nabla\t\psi\.(M^{-1}(e'))\equiv0\ \hbox{ in }\R^N$$
for any direction $e'\in\Sph$ such that $M^{-1}(e')\perp\mathrm{e}_N$, that is, $e'\perp M(\mathrm{e}_N)=e$. This will imply that $\psi(x)\equiv\Psi(x\.e)$ for some $\Psi\in C^2(\R)$.

Assume by contradiction that the above claim~\eqref{claimpsi} fails, that is, that $\nabla_{\!x'}\t\psi(\bar x)\neq0$ for some~$\bar x\in\R^N$. Let us call for short $\beta:=\nabla\tilde\psi(\bar x)$, hence $\beta=(\beta',\beta_N)\in\R^{N-1}\times\R$ with~$\beta'\neq0$, and it holds that
\Fi{DuntoDpsi}
\nabla u_n(t_n,k_n\mathrm{e}_N+\bar x)\to\beta\as n\to+\infty.
\Ff
	
Take a real number $\vt>0$, that will be fixed at the end of this paragraph. Let~$(H_n)_{n\in\N}$ be the family of closed half-cylinders in $\R^N$ defined~by
$$H_n:=\ol{B'_{\vt t_n}}\times(-\infty,\vt t_n].$$
Consider also the conical sets $(V_n)_{n\in\N}$ given by
\be\label{defVn}
V_n:=\big\{k_n\mathrm{e}_N+\bar x+s(\beta+\zeta):s\in\R,\,\zeta\in B_\vt\big\}.
\ee
We look for $\vt>0$ small enough so that 
\Fi{HnVn}
H_n\cap V_n=\emptyset\quad \text{for all $n$ sufficiently large}.
\Ff
To this end, consider a generic point $P=(P',P_N)\in V_n$. It can be written~as $P=k_n\mathrm{e}_N+\bar x+s(\beta+\zeta)$~for some $s\in\R$, $\zeta\in B_\vt$. Suppose that $P_N\leq\vt t_n$,~thus
$$\vt t_n\geq k_n-|\bar x|-(|\beta_N|+\vt) |s|.$$
By~\eqref{kntn}, for $n$ large enough we have $k_n>(c^*/2)t_n$ and therefore
\Fi{|s|}
|s|\geq \frac{(c^*/2-\vt)t_n-|\bar x|}{|\beta_N|+\vt}.
\Ff
The component $P'$ is estimated as follows:
$$|P'|\geq (|\beta'|-\vt)|s|-|\bar x|.$$
We impose $\vt<|\beta'|$ and, for $n$ sufficiently large, we can invoke~\eqref{|s|} and get
$$|P'|-\vt t_n\geq\left(\frac{|\beta'|-\vt}{|\beta_N|+\vt}(c^*/2-\vt)-\vt\right)t_n-\left(\frac{|\beta'|}{|\beta_N|}+1\right)|\bar x|.$$
Thus, one can find~$\vt\in(0,\min(|\beta'|,c^*/2))$ sufficiently small, only depending on $\beta'$, $\beta_N$ and~$c^*$, such that, for $n$ large, $|P'|>\vt t_n$, i.e.~$P\notin H_n$. This means that, with this choice of~$\vt$, condition~\eqref{HnVn} holds. This fixes our choice of $\vt$ and thus of the family of half-cylinders~$(H_n)_{n\in\N}$. 

\medskip
\noindent{\it{Step 3: the approximation procedure.}} We now apply Lemma \ref{lem:approximation} to the sequence of solutions~$\seq{u}$. Take $\delta>0$ from hypothesis~\eqref{UUdelta}, and call
\be\label{defL2}
\sigma:=\frac\vt3>0,\qquad L:=d_{\mc{H}}(U,U_\delta)+1>0,\qquad\e:=\frac\vt2>0,
\ee
where $\vt>0$ is given in the previous step. One has $0<\sigma<\vt<c^*/2$ and $0<L<+\infty$ by~\eqref{UUdelta}. We further have, on the one hand, that
\be\label{Undelta}
(U_n)_\delta\cap B_L\neq\emptyset,
\ee
because $0\in\ol U_n$ and $d_{\mc{H}}(U_n,(U_n)_\delta)=d_{\mc{H}}(U,U_\delta)<L$. On the other hand, it follows from~\eqref{Unsubset} that, for~$n$~large,  
$$U_n \subset\,\Big\{(x',x_N)\in\R^{N-1}\times\R \ : \ x_N\leq\frac{\sigma}{2c^*}|x'|\Big\}.$$
This means that the sets $U_n$ fulfill the hypotheses~\eqref{assapprox} of Lemma \ref{lem:approximation} for $n$ large enough. Therefore, for such values of $n$, considering the solution $u_n^{\vt t_n}$ of~\eqref{homo} whose initial datum is given by the indicator function of the set
$$U_n\cap(B_{\vt t_n}'\times\R),$$
the estimate~\eqref{utaue} implies 
\be\label{unthetan}
\big\|u_n(t_n,\.)-u_n^{\vt t_n}(t_n,\.)\big\|_{C^1(B'_{\vt t_n/3}\times\R^+)}<\frac\vt2,
\ee
provided that $t_n$ is larger than a quantity depending on $\vt$ but not on~$n$. This means that the above estimate holds for all $n$ sufficiently large. Furthermore,~\eqref{Undelta} implies that, for all $n$ large enough,
\be\label{Undelta2}
U_n\cap(B'_{\vt t_n}\times\R)\ \supset\ B_\delta(y_n),
\ee
for some $y_n\in B_L$. Since $u_n$ fulfills~\eqref{DuntoDpsi}, one then infers from~\eqref{unthetan} that
\Fi{untheta2}
\big|\nabla u_n^{\vt t_n}(t_n,k_n\mathrm{e}_N+\bar x)-\beta\big|<\vt\quad\text{for all $n$ sufficiently large}.
\Ff
This entails that, for such values of $n$, the line $\Gamma_n$ passing through the point $k_n\mathrm{e}_N+\bar x$ and directed as $\nabla u_n^{\vt t_n}(t_n,k_n\mathrm{e}_N+\bar x)$ is contained in the set $V_n$ defined in~\eqref{defVn}, and therefore, by~\eqref{HnVn},
$$\big(\Gamma_n\cap H_n\big)\subset \big(V_n \cap H_n\big)=\emptyset\quad \text{for all $n$ sufficiently large}.$$

Next, owing to~\eqref{Unsubset}, we also have that $\overline{U_n}\cap(\overline{B_{\vt t_n}'}\times\R)\subset H_n$ for all $n$ sufficiently large, hence
\be\label{suppu0n1}
\supp u_n^{\vt t_n}(0,\.)\subset H_n\quad \text{for all $n$ sufficiently large},
\ee
where $\supp u_n^{\vt t_n}(0,\.)$ denotes the support of $u_n^{\vt t_n}$ (one has $u_n^{\vt t_n}(0,\.)\equiv 0$ in $\R^N\setminus H_n$).

\medskip
\noindent{\it{Step 4: the reflection argument.}} Let $H_n,V_n,\Gamma_n$ and $u_n^{\vt t_n}$ be as in the previous steps. For~$n$ large enough, the half-cylinder $H_n$ and the line $\Gamma_n$ are convex, closed and disjoint; we can then separate them with an hyperplane, which, up to translation, can be assumed without loss of generality to contain $\Gamma_n$. Namely, for $n$ large, there exists an open half-space $\O_n$ such that
\be\label{inclusions}
\Gamma_n\subset\partial\O_n\quad\text{and}\quad H_n\subset\O_n.
\ee
Let~$\mc{R}_n$ denote the affine orthogonal reflection with respect to $\partial\O_n$. Then define the function $v^n$ in $[0,+\infty)\times\overline{\O_n}$ by
$$v^n(t,x):=u_n^{\vt t_n}(t,\mc{R}_n(x)).$$
The function $v^n$ coincides with $u_n^{\vt t_n}$ on $[0,+\infty)\times\partial\O_n$. Furthermore $v_n(0,\cdot)$ vanishes identically in $\O_n$, while $u_n^{\vt t_n}=1$ in $B_\delta(y_n)\subset U_n\cap(B'_{\vt t_n}\times\R)\subset H_n\subset\Omega_n$, provided $n$ is large enough for~\eqref{Undelta2},~\eqref{suppu0n1} and~\eqref{inclusions} to hold. Then, for such values of~$n$, it follows from the comparison principle that $v^n\leq u_n^{\vt t_n}$ in~$(0,+\infty)\times\O_n$, and moreover, by the Hopf lemma, that $\partial_{\nu_n} v^n>\partial_{\nu_n} u_n^{\vt t_n}$ on $(0,+\infty)\times\partial\O_n$, where~${\nu_n}$ is the exterior normal to $\O_n$. Since clearly $\partial_{\nu_n} v^n=-\partial_{\nu_n} u_n^{\vt t_n}$ on $(0,+\infty)\times\partial\Omega_n$, this means that~$\partial_{\nu_n} u_n^{\vt t_n}<0$ on $(0,+\infty)\times\partial\O_n$, and thus in particular that $\partial_{\nu_n} u_n^{\vt t_n}(t_n,k_n\mathrm{e}_N+\bar x)<0$, because~$k_n\mathrm{e}_N+\bar x\in\Gamma_n\subset\partial\O_n$. This is however impossible because $\nabla u_n^{\vt t_n}(t_n,k_n\mathrm{e}_N+\bar x)$ is parallel to $\Gamma_n$ and thus orthogonal to~$\nu_n$. We~have reached a contradiction. This therefore shows~\eqref{claimpsi}, and then $\psi(x)\equiv\Psi(x\cdot e)$ for some function $\Psi\in C^2(\R)$.

\medskip
\noindent{\it{Step 5: the large monotonicity property.}} Let us now show that $\Psi$ is nonincreasing, that is, $\partial_{x_N}\t\psi\leq0$ in $\R^N$. We fix an arbitrary $\e>0$ and an arbitrary $\hat x\in\R^N$. As in Step~3, we apply Lemma \ref{lem:approximation} to the sequence of solutions~$\seq{u}$. Namely, we take $\delta>0$ from hypothesis~\eqref{UUdelta} and call
\be\label{sigmaL2}
\sigma:=\frac{c^*}6,\qquad L:=d_{\mc{H}}(U,U_\delta)+1.
\ee
As seen before, with these values, we have that the sets $U_n$ fulfill~\eqref{assapprox} for $n$ large enough. Therefore, calling $u_n^{R}$ the solution with initial datum given by the indicator function of the~set
$$U_n\cap(B_{R}'\times\R),$$
for such large values of $n$, one can apply Lemma \ref{lem:approximation} and infer that
$$\forall\,\tau\geq\tau_\e,\quad\big\|u_n(\tau,\.)-u_n^{c^*\tau/2}(\tau,\.)\big\|_{C^1(B'_{c^*\tau/6}\times\R^+)}<\e,$$
where $\tau_\e$ only depends on $N,f,\delta,L,\e$. As a consequence, for $n$ large enough such that the above holds, and in addition $t_n>\tau_\e$, we deduce that
\Fi{un<e}
\big\|u_n(t_n,\.)-u_n^{c^*t_n/2}(t_n,\.)\big\|_{C^1(B'_{c^*t_n/6}\times\R^+)}<\e\quad\text{for all $n$ sufficiently large}.
\Ff

Next, consider the half-space
$$\O_n:=\R^{N-1}\times(-\infty,k_n+\hat x\.\eN],$$
so that $k_n\eN+\hat x\in\partial\O_n$. By~\eqref{kntn} and~\eqref{Unsubset} it holds that $U_n\cap(B_{c^*t_n/2}'\times\R)\subset\O_n$ for $n$ sufficiently large (also depending on $\hat x$), that is, the support of the initial datum of~$u_n^{c^*t_n/2}$ is contained in $\O_n$. The same reflection argument as in Step~4 eventually yields
$$\partial_{x_N} u_n^{c^*t_n/2}(t_n,k_n\eN+\hat x)\leq0\quad\text{for all $n$ sufficiently large}.$$
Combining this with~\eqref{un<e} we infer that $\partial_{x_N}\tilde\psi(\hat x)\leq\e$, and thus $\partial_{x_N}\tilde\psi\leq0$ in $\R^N$ by the arbitrariness of $\e>0$ and $\hat x\in\R^N$.

\medskip
\noindent{\it{Step 6: the strict monotonicity property.}} We have shown in the above steps that $\psi(x)\equiv\Psi(x\cdot e)$ with $\Psi$ nonincreasing. It remains to show that $\Psi$ is either constant or strictly decreasing in the whole $\R$. To do so, consider the functions $(t,x)\mapsto u(t_n+t,x_n+M_n(x))$ for $n\in\N$, which satisfy the same equation~\eqref{homo} as $u$, but for $t>-t_n$. From standard parabolic estimates, there is a solution $u_\infty:\R\times\R^N\to[0,1]$ of~\eqref{homo}, which is a solution also for~$t\leq0$, and such that, up to extraction of a subsequence,
$$u(t_n+t,x_n+M_n(x))\to u_\infty(t,x)\ \hbox{ as $n\to+\infty$ locally uniformly in $\R\times\R^N$}.$$
We apply the results derived in the Steps~1-5 to the sequences $(t_n-1)_{n\in\N}$ and $(x_n)_{n\in\N}$. Observe that also these sequences fulfill the condition of statement $(iii)$. Moreover, the direction $e$ associated with these new sequences, as defined at the beginning of Step~1, only depends on $(x_n)_{n\in\N}$ and $(\xi_n)_{n\in\N}$, hence it is the same before. We deduce the existence of a nonincreasing function $\Phi:\R\to[0,1]$ such that $u_\infty(-1,x)\equiv\Phi(x\cdot e)$ in $\R^N$. Notice also that $u_\infty(0,x)\equiv\Psi(x\cdot e)$ in $\R^N$. Now, if the function $\Phi$ is constant in $\R$, which means that $u_\infty(-1,\cdot)$ is constant in $\R^N$, then so is $u_\infty(0,\cdot)$ in $\R^N$ (because $u_\infty$ solves~\eqref{homo} for all $t\in\R$), that is, $\Psi$ is constant in $\R$. On the other hand, if $\Phi$ is not constant in $\R$, then, for each $h>0$,
$$u_\infty(-1,x+he)\le u_\infty(-1,x)\ \hbox{ and }\ u_\infty(-1,x+he)\not\equiv u_\infty(-1,x)\ \hbox{ in $\R^N$},$$
hence $u_\infty(0,x+he)<u_\infty(0,x)$ in $\R^N$ from the strong parabolic maximum principle, yielding $\Psi(s+h)<\Psi(s)$ for all $s\in\R$. As a conclusion, $\Psi$ is either constant or strictly decreasing in $\R$. The proof of the theorem is complete.
\end{proof}

Theorem~\ref{thm:DGgeneral} is a direct consequence of Theorem~\ref{thm:psi}. We also point out that in the case $(iii)$ of Theorem~\ref{thm:psi} it may still happen that $\psi$ is constant, and this actually occurs for instance when $U$ is bounded. Indeed, in such a case, $\Psi$ coincides with some translation of the profile $\vp$ of the critical front (hence it is not constant) if and only~if the quantity
$$\dist(x_n,U)-c^*t_n+\frac{N+2}{c^*}\ln t_n$$
stays bounded as $n\to+\infty$ (that is, if and only if $|x_n|-c^*t_n+\frac{N+2}{c^*}\ln t_n$ is bounded as $n\to+\infty$), otherwise $\Psi\equiv1$ if it diverges to $-\infty$ and $\Psi\equiv0$ if it diverges to $+\infty$, see~\cite{D,RRR}.


\subsection{Proof of Theorem~\ref{thm:DG}}\label{sec53}

Let us turn to Theorem~\ref{thm:DG} that, as we now show, is a special case of Theorem~\ref{thm:DGgeneral}. For~this, we need to check that the geometric condition~\eqref{ballcone} is invariant among sets having finite Hausdorff distance from one another. This is done in the following geometric~lemma.

\begin{lemma}\label{lem:ballcone}
For any $U\subset\R^N$, consider the function $\mc{O}(x)$ defined in~\eqref{opening}. Then the~map
$$R\mapsto\sup_{x\in\R^N,\,\dist(x,U)=R}\,\mc{O}(x)$$
is nonincreasing in $(0,+\infty)$. Moreover, for any $U'\subset\R^N$ satisfying $d_{\mc H}(U,U')<+\infty$, then~ $U$ fulfills~\eqref{ballcone} if and only if~$U'$ does $($with the corresponding $\mc{O}$ defined as in~\eqref{opening} with $U'$ instead of $U)$.
\end{lemma}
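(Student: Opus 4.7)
I treat the two claims separately. Both rest on the elementary \emph{inward-preservation fact}: if $\xi\in\pi_x$ and $\tilde x\in[\xi,x]$, then $\xi\in\pi_{\tilde x}$ (a one-line triangle inequality applied to any $\eta\in\ol U$: $|\tilde x-\eta|\geq|x-\eta|-|x-\tilde x|\geq|x-\xi|-|x-\tilde x|=|\tilde x-\xi|$).

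For the monotonicity, given $0<R<R'$ and $x$ with $\dist(x,U)=R'$, for each $\xi\in\pi_x$ I set $\tilde x:=\xi+(R/R')(x-\xi)$. Then $\dist(\tilde x,U)=R$ by inward-preservation, and the unit vector $(\tilde x-\xi)/|\tilde x-\xi|$ coincides with $(x-\xi)/|x-\xi|$, so each $y\in U\setminus\{\xi\}$ contributes the same dot product to $\mathcal O(x)$ and $\mathcal O(\tilde x)$. Passing to suprema gives $\mathcal O(\tilde x)\geq\mathcal O(x)$, whence $\sup_{\dist(\cdot,U)=R'}\mathcal O\leq\sup_{\dist(\cdot,U)=R}\mathcal O$.

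For the Hausdorff-invariance statement, by symmetry it suffices to deduce \eqref{ballcone} for $U'$ from \eqref{ballcone} for $U$, where $D:=d_{\mc H}(U,U')<+\infty$ and, without loss of generality, both sets are closed. I argue by contradiction: if \eqref{ballcone} fails for $U'$, there exist $\epsilon>0$ and sequences $x_n\in\R^N$, $\xi_n'\in\pi_{x_n}^{U'}$, $y_n'\in U'\setminus\{\xi_n'\}$ with $R_n:=\dist(x_n,U')\to+\infty$ and the relevant dot product exceeding $\epsilon$. The elementary nearest-point inequality $(x-\xi')\.(y'-\xi')\leq|y'-\xi'|^2/2$ (valid for any $\xi'\in\pi_x^{U'}$ and $y'\in U'$) forces $s_n:=|y_n'-\xi_n'|>2\epsilon R_n\to+\infty$. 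Picking $\xi_n,y_n\in U$ within $D$ of $\xi_n',y_n'$ respectively (achieved by closedness), an angular-perturbation estimate using $s_n\to\infty$ yields, for $n$ large,
\[
\frac{\hat x_n-\xi_n}{R_n}\.\frac{y_n-\xi_n}{|y_n-\xi_n|}>\frac{\epsilon}{2},\qquad\hat x_n:=x_n+(\xi_n-\xi_n'),
\]
so that $|\hat x_n-\xi_n|=R_n$ and $\dist(\hat x_n,U)\in[R_n-2D,R_n]\to+\infty$.

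The main obstacle is that $\xi_n$ need not lie in $\pi_{\hat x_n}^U$, so \eqref{ballcone} for $U$ cannot be invoked directly at $\hat x_n$ with $\xi_n$ as projection. My plan to overcome this is to compare $\xi_n$ with an actual projection $\bar\xi_n\in\pi_{\hat x_n}^U$. For any prescribed $\tilde\epsilon>0$, \eqref{ballcone} for $U$ gives $\mathcal O_U(\hat x_n)\leq\tilde\epsilon$ for $n$ large. Applied with $y=\xi_n$, this yields the ``almost-perpendicularity'' $\frac{\hat x_n-\bar\xi_n}{|\hat x_n-\bar\xi_n|}\.\frac{\xi_n-\bar\xi_n}{|\xi_n-\bar\xi_n|}\leq\tilde\epsilon$; combined with the identity $|\hat x_n-\xi_n|^2=|\hat x_n-\bar\xi_n|^2+|\xi_n-\bar\xi_n|^2-2(\hat x_n-\bar\xi_n)\.(\xi_n-\bar\xi_n)$ and the norm bounds $|\hat x_n-\xi_n|=R_n$, $|\hat x_n-\bar\xi_n|\in[R_n-2D,R_n]$, this forces $|\xi_n-\bar\xi_n|=O(R_n\tilde\epsilon)+O(\sqrt{R_nD})$, so $\xi_n$ is close to $\bar\xi_n$ on the scale of $R_n$. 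A direct expansion of the displayed scalar product in terms of $\bar\xi_n$ then bounds its left-hand side by $C\tilde\epsilon+C\tilde\epsilon^2/\epsilon+o(1)$ for an absolute constant $C$, again using $\mathcal O_U(\hat x_n)\leq\tilde\epsilon$ now with $y=y_n$. Choosing $\tilde\epsilon$ small enough in terms of $\epsilon$ makes this bound strictly less than $\epsilon/2$, contradicting the displayed inequality.
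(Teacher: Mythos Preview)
Your monotonicity argument is essentially identical to the paper's (same inward-preservation trick and same rescaled point), modulo a small sloppiness: the claim ``$\mathcal O(\tilde x)\geq\mathcal O(x)$'' is not literally correct since $\tilde x$ depends on the choice of $\xi\in\pi_x$; what you actually get is that the \emph{contribution of each fixed $\xi$} to $\mathcal O(x)$ equals $\mathcal O(\tilde x_\xi)\leq\sup_{\dist=R}\mathcal O$, and then you take the supremum over~$\xi$.

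For the Hausdorff-invariance part your approach is correct but genuinely different from the paper's, and somewhat more roundabout. The paper keeps the \emph{same} point $x_n$ and uses the actual projections $\xi_n\in\pi_{x_n}^U$ and $\xi_n'\in\pi_{x_n}^{U'}$; since $\xi_n$ is a true projection onto $U$, the term $\frac{x_n-\xi_n}{|x_n-\xi_n|}\cdot\frac{y_n-\xi_n}{|y_n-\xi_n|}$ is bounded by $\mathcal O_U(x_n)\to0$ directly, and it only remains to control the angular differences $|(x_n-\xi_n')/R_n'-(x_n-\xi_n)/R_n|$ and $|(y_n'-\xi_n')/|y_n'-\xi_n'|-(y_n-\xi_n)/|y_n-\xi_n||$. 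You instead translate $x_n$ to $\hat x_n$ so that the non-projection point $\xi_n$ sits at the right distance, and then introduce a \emph{second} correction $\bar\xi_n\in\pi_{\hat x_n}^U$ and bound $|\xi_n-\bar\xi_n|$ using~\eqref{ballcone}. This works, but buys you an extra layer of perturbation; the paper's version avoids the translation and the auxiliary $\bar\xi_n$ entirely. One minor correction: in your final expansion the second-factor angular error is of order $b/|y_n-\xi_n|\sim \tilde\epsilon R_n/(\epsilon R_n)=\tilde\epsilon/\epsilon$, so the bound should read $C\tilde\epsilon+C\tilde\epsilon/\epsilon+o(1)$ rather than $C\tilde\epsilon^2/\epsilon$; this does not affect the conclusion (take $\tilde\epsilon$ of order~$\epsilon^2$).
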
	

\begin{proof}
The monotonicity property involving $\mc{O}$ is readily derived. Consider indeed any
$$0<R'<R.$$
If the set $\{x\in\R^N:\dist(x,U)=R\}$ is empty, then $\sup_{x\in\R^N,\,\dist(x,U)=R}\mc{O}(x)=-\infty$ and the inequality $\sup_{x\in\R^N,\,\dist(x,U)=R}\mc{O}(x)\le\sup_{x\in\R^N,\,\dist(x,U)=R'}\mc{O}(x)$ is trivially true. Assume now that the set $\{x\in\R^N:\dist(x,U)=R\}$ is not empty, and consider any $x$ in this set and any $\xi\in\pi_x$, that is, $\xi\in\overline{U}$ and $|x-\xi|=\dist(x,U)=R$. If $U=\{\xi\}$ then $\mc{O}(x)=-\infty$ by our convention, hence this case is trivial too.

Assume then that $U\neq\{\xi\}$. Consider the point $x':=\xi+(R'/R)(x-\xi)$. Its unique projection onto $\overline{U}$ is $\xi$, that is, $\pi_{x'}=\{\xi\}$. Furthermore, $\dist(x',U)=|x'-\xi|=R'$. One also observes that, for any $y\in U\!\setminus\!\{\xi\}$,
$$\frac{x-\xi}{|x-\xi|}\.\frac{y-\xi}{|y-\xi|}=\frac{x'-\xi}{|x'-\xi|}\.\frac{y-\xi}{|y-\xi|}\le\mc{O}(x')\le\sup_{z\in\R^N,\,\dist(z,U)=R'}\mc{O}(z).$$
Since $x$ with $\dist(x,U)=R$, together with $\xi\in\pi_x$ and $y\in U\!\setminus\!\{\xi\}$, were arbitrary, this shows that
$$\sup_{z\in\R^N,\,\dist(z,U)=R}\mc{O}(z)\leq\sup_{z\in\R^N,\,\dist(z,U)=R'}\mc{O}(z).$$
	
Let us turn to the second statement of the lemma. One considers any two subsets~$U$ and~$U'$ of $\R^N$ satisfying~$d_{\mc{H}}(U,U')<+\infty$. Denote $\pi'_x$ and $\mc{O}'(x)$ the objects defined as in~\eqref{defpix}-\eqref{opening} with $U'$ instead of $U$.

Assume by way of contradiction that $U$ fulfills~\eqref{ballcone} and $U'$ does not. Then there are~$\e>0$ and a sequence $(x_n)_{n\in\N}$ in $\R^N\!\setminus\!\overline{U'}$ such that
\be\label{xnR'n}
0<R'_n:=\dist(x_n,U')\to+\infty\hbox{ as $n\to+\infty,\ $ and $\ \mc{O}'(x_n)\ge2\e>0$ for all $n\in\N$}.
\ee
Calling $d:=d_{\mc{H}}(U,U')<+\infty$, one then has $R_n:=\dist(x_n,U)\to+\infty$ as $n\to+\infty$, and moreover
\be\label{RnR'n}
R_n-d\le R'_n\le R_n+d\ \hbox{ for all }n\in\N.
\ee
Without loss of generality, one has $R_n>0$ for every $n\in\N$. Since $U$ is assumed to satisfy~\eqref{ballcone}, there holds $\limsup_{n\to+\infty}\mc{O}(x_n)\le0$, that is,
\be\label{Oxn}
\mc{O}(x_n)^+:=\max\big(\mc{O}(x_n),0\big)\to0\as n\to+\infty.
\ee
Now, from~\eqref{xnR'n}, for each $n\in\N$, there are $\xi'_n\in\pi'_{x_n}$, that is, $\xi'_n\in\overline{U'}$ and $|x_n-\xi'_n|=\dist(x_n,U')=R'_n>0$, and $y'_n\in U'\!\setminus\!\{\xi'_n\}$ such that
\be\label{xi'ny'n}
\frac{x_n-\xi'_n}{|x_n-\xi'_n|}\cdot\frac{y'_n-\xi'_n}{|y'_n-\xi'_n|}\ge\e>0.
\ee
For each $n\in\N$, consider any $\xi_n\in\pi_{x_n}$, that is, $\xi_n\in\overline{U}$ and $|x_n-\xi_n|=\dist(x_n,U)=R_n>0$, and then there is a point $y_n\in U$ such that
\be\label{d+1}
|y_n-y'_n|\le d_{\mc{H}}(U,U')+1=d+1.
\ee
We estimate from above the quantities in~\eqref{xi'ny'n} by writing:
\be\label{ineqcontra}
\frac{x_n\!-\!\xi'_n}{|x_n\!-\!\xi'_n|}\.\frac{y'_n\!-\!\xi'_n}{|y'_n\!-\!\xi'_n|}\!\leq\!\underbrace{\Big|\frac{x_n\!-\!\xi'_n}{|x_n\!-\!\xi'_n|}\!-\!\frac{x_n\!-\!\xi_n}{|x_n\!-\!\xi_n|}\Big|}_{=:I_{1,n}}\!+\!\underbrace{\Big|\frac{y'_n\!-\!\xi'_n}{|y'_n\!-\!\xi'_n|}\!-\!\frac{y_n\!-\!\xi_n}{|y_n\!-\!\xi_n|}\Big|}_{=:I_{2,n}}\!+\!\underbrace{\frac{x_n\!-\!\xi_n}{|x_n\!-\!\xi_n|}\!\.\!\frac{y_n\!-\!\xi_n}{|y_n\!-\!\xi_n|}}_{=:I_{3,n}}\!.
\ee
This inequality is understood to hold whenever $y_n\neq\xi_n$, which we will show to occur for~$n$ sufficiently large. We will then prove that $I_{1,n}$, $I_{2,n}$, $I_{3,n}$ $\to0$ as $n\to+\infty$, which will eventually contradict~\eqref{xi'ny'n}. In order to estimate $I_{1,n}$, we take~$z_n\in\overline{U}$ such that~$|z_n-\xi'_n|\leq d$ and we compute
\be\label{xxi}\baa{rcl}
(x_n-\xi_n)\.(x_n-\xi'_n) & = & R_n^2+(x_n-\xi_n)\.(\xi_n-z_n)+(x_n-\xi_n)\.(z_n-\xi'_n)\vspace{3pt}\\
& \geq & R_n^2-\mc{O}(x_n)R_n|z_n-\xi_n|-R_nd\vspace{3pt}\\
& \geq & R_n\big(R_n-2(R_n+d)\mc{O}(x_n)^+-d\big),\eaa
\ee
where the last inequality follows from
$$|z_n-\xi_n|\le|z_n-\xi'_n|+|\xi'_n-x_n|+|x_n-\xi_n|\le d+R'_n+R_n\le2(R_n+d).$$
One then derives from~\eqref{RnR'n} and~\eqref{xxi} that
\Fi{I1}
0\le I_{1,n}\leq\sqrt{2-\frac{2\big(R_n-2(R_n+d)\mc{O}(x_n)^+-d\big)}{R'_n}}\leq2\sqrt{\frac{(R_n+d)\mc{O}(x_n)^++d}{R'_n}}.
\Ff
Together with~\eqref{xnR'n}-\eqref{Oxn}, one gets that
\be\label{I1n}
I_{1,n}\to0\as n\to+\infty.
\ee
Next, let us check that $y_n\neq\xi_n$ for $n$ large. We first control $|y'_n-\xi'_n|$ from below. We write
$$|y'_n-x_n|^2=|y'_n-\xi'_n|^2+(R'_n)^2-2(y'_n-\xi'_n)\.(x_n-\xi'_n),$$	
which together with~\eqref{xi'ny'n} and the inequality $|y'_n-x_n|\ge\dist(x_n,U')=R'_n$ yields
$$|y'_n-\xi'_n|\big(|y'_n-\xi'_n|-2\e R'_n\big)\geq|y'_n-x_n|^2-(R'_n)^2\ge 0.$$
Since $y'_n\neq\xi'_n$, this means that
\be\label{y'nxi'n}
|y'_n-\xi'_n|\geq 2\e R'_n.
\ee
Now, using \eqref{xxi} and $R'_n\leq R_n+d$, one infers
\be\label{xixi'nRn}
|\xi_n-\xi'_n|^2=R_n^2+(R'_n)^2-2(x_n-\xi_n,x_n-\xi'_n)\leq 4R_nd+d^2+4R_n(R_n+d)\mc{O}(x_n)^+.
\ee
Gathering together the inequalities~\eqref{d+1},~\eqref{y'nxi'n} and~\eqref{xixi'nRn} shows that
\be\label{ynxn}\baa{rcl}
|y_n-\xi_n| & \geq & |y'_n-\xi'_n|-|y_n-y'_n|-|\xi'_n-\xi_n|\vspace{3pt}\\
& \geq & 2\e R'_n-(d+1)-\sqrt{4R_nd+d^2+4R_n(R_n+d)\mc{O}(x_n)^+}.\eaa
\ee
The right-hand side is positive for all $n$ large enough and is equivalent to $2\e R'_n$ as $n\to+\infty$, because of~\eqref{xnR'n}-\eqref{Oxn}. This means that $y_n\neq\xi_n$ for $n$ large enough. Let us estimate $I_{2,n}$. One has, for $n$ large,
\be\label{I2n}\baa{rcl}
0\le I_{2,n} & \!\!\!=\!\!\! & \displaystyle\sqrt{2-2\frac{(y'_n-\xi'_n)\.(y_n-\xi_n)}{|y'_n-\xi'_n|\times|y_n-\xi_n|}}\vspace{3pt}\\
& \!\!\!=\!\!\! & \displaystyle\sqrt{\frac{|(y'_n-\xi'_n)-(y_n-\xi_n)|^2-(|y'_n-\xi'_n|-|y_n-\xi_n|)^2}{|y'_n-\xi'_n|\times|y_n-\xi_n|}}\vspace{3pt}\\
& \!\!\!\leq\!\!\! & \displaystyle\frac{|(y'_n\!-\!\xi'_n)-(y_n\!-\!\xi_n)|}{\sqrt{|y'_n\!-\!\xi'_n|\!\times\!|y_n\!-\!\xi_n|}}\le\frac{|y_n\!-\!y'_n|+|\xi_n\!-\!\xi'_n|}{\sqrt{|y'_n\!-\!\xi'_n|\!\times\!|y_n\!-\!\xi_n|}}\le\frac{d\!+\!1\!+\!|\xi_n\!-\!\xi'_n|}{\sqrt{|y'_n\!-\!\xi'_n|\!\times\!|y_n\!-\!\xi_n|}},\eaa
\ee
where the last inequality follows from~\eqref{d+1}. Putting together~\eqref{y'nxi'n}-\eqref{I2n} leads to
$$0\le I_{2,n}\leq\frac{d+1+\sqrt{4R_nd+d^2+4R_n(R_n+d)\mc{O}(x_n)^+}}{\sqrt{2\e R'_n}\times\sqrt{2\e R'_n-(d\!+\!1)-\sqrt{4R_nd\!+\!d^2\!+\!4R_n(R_n\!+\!d)\mc{O}(x_n)^+}}}$$
for all $n$ large enough. Using again~\eqref{xnR'n}-\eqref{Oxn}, it follows that 
\be\label{I2nto0}
I_{2,n}\to0\as n\to+\infty.
\ee 
Finally, one has that $0\le I_{3,n}\leq\mc{O}(x_n)\le\mc{O}(x_n)^+$ for all $n$, hence $I_{3,n}\to0$ as $n\to+\infty$, by~\eqref{Oxn}. Together with~\eqref{ineqcontra},~\eqref{I1n} and~\eqref{I2nto0}, one gets that
$$\limsup_{n\to+\infty}\frac{x_n-\xi'_n}{|x_n-\xi'_n|}\.\frac{y'_n-\xi'_n}{|y'_n-\xi'_n|}\le0,$$
a contradiction with~\eqref{xi'ny'n}. The conclusion of the lemma then follows by changing the roles of $U$ and $U'$.
\end{proof}

\begin{proof}[Proof of Theorem~$\ref{thm:DG}$]
If the set $U$ is convex, then the quantity $\mc{O}(x)$ defined by~\eqref{opening} satisfies $\mc{O}(x)\leq0$ for all $x\notin\ol U$, hence condition~\eqref{ballcone} is immediately true in this case. Condition~\eqref{ballcone} holds true as well when $U$ is at bounded Hausdorff distance from a convex set $U'$, thanks to Lemma~\ref{lem:ballcone}.  The conclusion then follows from Theorem~\ref{thm:DGgeneral}.		
\end{proof} 


\subsection{Counterexamples without the conditions~\eqref{UUdelta} or~\eqref{ballcone}}\label{sec54}

This section essentially consists of two propositions, which assert that the conclusions of the main results do not hold in general without the assumptions~\eqref{UUdelta} or~\eqref{ballcone}.

\begin{proposition}\label{proV}
Let $u$ be the solution of~\eqref{homo} with an initial datum $u_0=\1_U$, where~$U$ is the union of two half-spaces with non-parallel boundaries. The set $U$ satisfies~\eqref{UUdelta} but not~\eqref{ballcone}, and $\Omega(u)$ contains some elements which are not one-dimensional.
\end{proposition}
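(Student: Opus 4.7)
The statement has three claims, which I address in turn. For \eqref{UUdelta}, I write $U = H_1 \cup H_2$ with $H_1,H_2$ closed half-spaces, and let $\nu_i$ be the inward unit normal to $\partial H_i$. I claim $d_{\mc H}(U, U_\delta) \le \delta$: given $x \in U$, say $x \in H_1$, the point $y := x + \delta\nu_1$ lies in $H_1$ with $\dist(y, \partial H_1) \ge \delta$. Any $p \in \partial U$ lies in $\partial H_1 \cup \partial H_2$, and if $p \in \partial H_2 \setminus \partial H_1$ then $p \notin \inter(H_1)$, so the segment $[y,p]$ crosses $\partial H_1$, whence $|y-p| \ge \delta$ in every case. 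Thus $y \in U_\delta$ with $|x-y| = \delta$.

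For the failure of \eqref{ballcone}, rotate and translate coordinates so that $H_1 = \{x_N \le x_1\}$ and $H_2 = \{x_N \le -x_1\}$ with no constraint on $x_2, \dots, x_{N-1}$, giving $U = \{x_N \le |x_1|\}$ and wedge's symmetry axis $\R\eN$. For $z := R\,\eN$ with $R>0$, $\dist(z,U) = R/\sqrt 2$ and the projections onto $\ol U$ are $\xi^\pm := \tfrac{R}{2}(\pm\mathrm{e}_1 + \eN)$. Taking $\xi = \xi^+$ and $y_M := M(-\mathrm{e}_1 + \eN) \in \partial H_2 \subset U$, both unit vectors $(z-\xi)/|z-\xi|$ and $(y_M-\xi)/|y_M-\xi|$ converge to $(-\mathrm{e}_1 + \eN)/\sqrt 2$ as $M \to +\infty$, so $\mc O(z) = 1$. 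Therefore $\sup_{\dist(x,U)=R} \mc O(x) = 1$ for every $R > 0$, violating \eqref{ballcone}.

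For the non-one-dimensional element of $\Omega(u)$, let $u_i$ be the solution of \eqref{homo} from $\1_{H_i}$; by planar invariance, $u_i(t,x) = v(t, \sigma_i(x))$, where $v$ is the 1D Fisher-KPP solution from the Heaviside datum $\1_{s\le 0}$ and $\sigma_1(x) = (x_N - x_1)/\sqrt 2$, $\sigma_2(x) = (x_N + x_1)/\sqrt 2$ are signed distances. The subadditivity \eqref{sumsuper} yields
\[
\max(u_1, u_2) \,\le\, u \,\le\, \min(u_1 + u_2, 1) \quad \text{in } [0, +\infty) \times \R^N.
\]
I fix $a \in (0, 1/2)$ and choose $t_n \to +\infty$ and $m_n \to +\infty$ with $v(t_n, m_n) = a$, which exist by the strict monotonicity in $s$ and spreading of $v$. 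Setting $x_n := \sqrt 2\, m_n\, \eN$ gives $\sigma_1(x_n) = \sigma_2(x_n) = m_n$. Up to extraction, $u(t_n, x_n + \cdot) \to \psi$ and $v(t_n, m_n + \cdot) \to w$ in $C^2_{loc}$; by the classical convergence of 1D KPP solutions from a Heaviside datum to the critical traveling front (Kametaka--Uchiyama, or Bramson), $w$ is a translate of $\vp$, in particular $w$ is strictly decreasing with $w(-\infty) = 1$ and $w(+\infty) = 0$.

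Finally I show $\psi$ is not one-dimensional. The invariances of $U$ under $x_1 \mapsto -x_1$ and, for $N\ge 3$, under translations in $x_2, \dots, x_{N-1}$ pass to $u$ by uniqueness of the Cauchy problem, hence to $\psi$. Passing the sandwich to the limit gives
\[
\max\!\bigl(w(\tfrac{z_N-z_1}{\sqrt 2}),\,w(\tfrac{z_N+z_1}{\sqrt 2})\bigr) \,\le\, \psi(z) \,\le\, \min\!\bigl(w(\tfrac{z_N-z_1}{\sqrt 2}) + w(\tfrac{z_N+z_1}{\sqrt 2}),\,1\bigr).
\]
Suppose by contradiction $\psi(z) = \Phi(z\cdot e)$ with $e \in \Sph$ and $\Phi \in C^2(\R)$. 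Restricted to $\{z = t\eN\}$, the sandwich forces $\Phi \to 0$ at $+\infty$ and $\to 1$ at $-\infty$, so $\Phi$ is non-constant and $e_N \ne 0$; translation invariance forces $e_2 = \dots = e_{N-1} = 0$; the reflection $z_1 \mapsto -z_1$ combined with $e_N \ne 0$ forces $e_1 = 0$, else $\Phi$ would be symmetric about every real number, hence constant. So $e = \pm\eN$ and $\psi$ depends only on $z_N$, yet the lower bound gives $\psi(z_1, 0, \ldots, 0) \ge w(-|z_1|/\sqrt 2) \to 1$ as $|z_1| \to +\infty$, while $\psi(0) \le 2a < 1$: contradiction. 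The main technical input is the one-dimensional convergence $v(t_n, m_n + \cdot) \to w$ to a non-trivial profile; once this is in hand the geometry and symmetry do the rest.
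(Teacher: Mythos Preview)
Your proof follows essentially the same strategy as the paper's: the sandwich $\max(u_1,u_2)\le u\le u_1+u_2$ coming from the KPP subadditivity, the one-dimensional convergence of the Heaviside solution to a shift of the critical front, the symmetry $x_1\mapsto -x_1$ to force the putative direction $e$ to be $\pm\mathrm{e}_N$, and finally the contradiction via the lower bound as $|z_1|\to+\infty$. You also supply explicit arguments for \eqref{UUdelta} and the failure of \eqref{ballcone}, which the paper simply asserts.

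There is one slip: your coordinate choice $H_1=\{x_N\le x_1\}$, $H_2=\{x_N\le -x_1\}$ is not a legitimate reduction in general, since it forces the dihedral angle of the wedge to be $\pi/2$. The correct normal form (used in the paper) is $U=\{x_N\le\beta|x_1|\}$ for some $\beta>0$. Your arguments then carry over verbatim with $1/\sqrt2$ replaced by $\cos\alpha=1/\sqrt{1+\beta^2}$ in the signed distances, and with $y_M=M(-\mathrm{e}_1+\beta\,\mathrm{e}_N)\in\partial H_2$ yielding $\mc O(z)\ge 2\beta/(1+\beta^2)>0$ (rather than $\mc O(z)=1$), which is all that is needed to contradict~\eqref{ballcone}.
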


\begin{proof}
We consider the dimension $N=2$ only, since the general case $N\ge3$ follows by trivially extending the functions in the variables $x_3,\cdots,x_N$. Since the equation~\eqref{homo} is invariant by rigid transformations, one can assume without loss of generality that
$$U=\big\{(x_1,x_2)\in\R^2:x_2\le\beta\,|x_1|\big\},$$
for some $\beta>0$. Denote $\alpha:=\arctan\beta\in(0,\pi/2)$. Notice immediately that $U$ satisfies~\eqref{UUdelta}, but it does not satisfy~\eqref{ballcone}. Let $u$ be the solution of~\eqref{homo} in dimension $N=2$, with initial condition $u_0:=\1_U$. Let $v$ be the solution of~\eqref{homo} in dimension $N=1$, with Heaviside initial condition $v_0:=\1_{(-\infty,0]}$. As follows from~\cite{B,HNRR,KPP,L,U1}, there is a function $t\mapsto\zeta(t)$ such that
\be\label{vvarphi}
v(t,x)-\varphi(x-\zeta(t))\to0\ \hbox{ as $t\to+\infty$ uniformly in $x\in\R$},
\ee
where $\varphi$ is the (decreasing) profile of the traveling front $\varphi(x-c^*t)$ solving~\eqref{eqvp}, with  $N=1$, $e=1$, and minimal speed $c=c^*=2\sqrt{f'(0)}$ (furthermore, it is known that $\zeta(t)=c^*t-(3/c^*)\ln t+x_\infty+o(1)$ as $t\to+\infty$, for some real number $x_\infty$). Since~\eqref{homo} is invariant by rigid transformations and since $f(a+b)\le f(a)+f(b)$ for all $a,b\ge0$ by~\eqref{sumsuper}, 
it follows from the definition of~$U$ and the maximum principle that 
\be\label{ineqV}
\begin{array}{l}
\max\big(v(t,x_2\cos\alpha-x_1\sin\alpha),v(t,x_2\cos\alpha+x_1\sin\alpha)\big)\vspace{3pt}\\
\qquad\qquad\qquad\le u(t,x_1,x_2)\le v(t,x_2\cos\alpha-x_1\sin\alpha)+v(t,x_2\cos\alpha+x_1\sin\alpha)\end{array}
\ee
for all $t\ge0$ and $(x_1,x_2)\in\R^2$. Together with~\eqref{vvarphi}, one gets that
$$\liminf_{t\to+\infty}u\Big(t,0,\frac{\zeta(t)+\varphi^{-1}(1/2)}{\cos\alpha}\Big)\ge\frac12\quad 
\hbox{and}\quad\limsup_{t\to+\infty}u\Big(t,0,\frac{\zeta(t)+\varphi^{-1}(1/8)}{\cos\alpha}\Big)\le\frac14,$$
where $\varphi^{-1}:(0,1)\to\R$ denotes the reciprocal of the decreasing function $\varphi$. Consider now any sequence $(t_n)_{n\in\N}$ diverging to $+\infty$. Up to extraction of a subsequence, the functions $(x_1,x_2)\mapsto u(t_n,x_1,\zeta(t_n)/\cos\alpha+x_2)$ converge in $C^2_{loc}(\R^2)$ to a function $\psi:\R^2\to[0,1]$, which then belongs to $\Omega(u)$. One has
$$\psi\Big(0,\frac{\varphi^{-1}(1/2)}{\cos\alpha}\Big)\ge\frac12>\frac14\ge\psi\Big(0,\frac{\varphi^{-1}(1/8)}{\cos\alpha}\Big),$$
hence there exists $y\in(\varphi^{-1}(1/2)/\cos\alpha,\varphi^{-1}(1/8)/\cos\alpha)$ such that $1/4<\psi(0,y)<1/2$ and $\partial_{x_2}\psi(0,y)<0$. Furthermore, since $U$ is symmetric with respect to the axis $\{x_1=0\}$, the function $u_0$ is even in $x_1$, and so are $u(t,\cdot)$ for every $t>0$, and then $\psi$. Thus, $\partial_{x_1}\psi(0,x_2)=0$ for all $x_2\in\R$. From the previous observations, the gradient of $\psi$ at the point $(0,y)$ is a non-zero vector parallel to the vector $(0,1)$. If the function $\psi$ were one-dimensional, it would then necessarily be written as $\psi(x_1,x_2)\equiv\Psi(x_2)$ in $\R^2$, for some $C^2(\R)$ function $\Psi$. In particular, one would have that $\psi(x_1,y)=\psi(0,y)\in(1/4,1/2)$ for all $x_1\in\R$. But~\eqref{vvarphi}-\eqref{ineqV} yield
$$\baa{r}
\displaystyle1\ge\psi(x_1,y)=\lim_{n\to+\infty}u\Big(t_n,x_1,\frac{\zeta(t_n)}{\cos\alpha}+y\Big)\ge\lim_{n\to+\infty}v(t_n,\zeta(t_n)+y\cos\alpha-x_1\sin\alpha)\vspace{3pt}\\
=\varphi(y\cos\alpha-x_1\sin\alpha),\eaa$$
hence $\psi(x_1,y)\to1$ as $x_1\to+\infty$, leading to a contradiction. As a conclusion, the element $\psi$ of $\Omega(u)$ is not one-dimensional. Notice finally that all shifts $\psi(\cdot+a_1,\cdot+a_2)$ of $\psi$ belong to $\Omega(u)$, and are not one-dimensional either.
\end{proof}

\begin{remark}{\rm In the example considered in the above proof, the set $\Omega(u)$ nevertheless contains some elements which are one-dimensional (apart from the constant elements $0$ and $1$, which belong to $\Omega(u)$ by Proposition~\ref{pro:uniformspreading}). Indeed, it follows from~\eqref{vvarphi}-\eqref{ineqV} and $\varphi(+\infty)=0$ that, for any $\varrho:[0,+\infty)\to\R$ with $\varrho(t)\to+\infty$ as $t\to+\infty$, one has
$$u\Big(t,\varrho(t)+x_1,\varrho(t)\tan\alpha+\frac{\zeta(t)}{\cos\alpha}+x_2\Big)\to\varphi(x_2\cos\alpha-x_1\sin\alpha)\ \hbox{ as }t\to+\infty,$$
locally uniformly in $(x_1,x_2)\in\R^2$. Therefore, the one-dimensional non-constant function $(x_1,x_2)\mapsto\varphi(x_2\cos\alpha-x_1\sin\alpha)$ belongs to $\Omega(u)$. So does the one-dimensional non-constant function $(x_1,x_2)\mapsto\varphi(x_2\cos\alpha+x_1\sin\alpha)$, by choosing $\varrho$ such that $\varrho(+\infty)=-\infty$ and adapting the above limit.}
\end{remark}

The second result shows that the conclusions of the main results do not hold in general without the assumption~\eqref{UUdelta}.

\begin{proposition}\label{proV2}
In any dimension $N\ge2$, there are measurable sets $U\subset\R^N$, which satisfy~\eqref{ballcone} but not~\eqref{UUdelta}, such that $\Omega(u)$ contains some elements that are not one-dimensional, where $u$ is the solution of~\eqref{homo} with initial datum $u_0=\1_U$.
\end{proposition}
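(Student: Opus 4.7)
Following the sketch after Theorem~\ref{thm:DG}, I would take $U'\subset\R^N$ to be a V-shaped set as in Proposition~\ref{proV} (the analysis below reduces the case $N\ge 3$ to $N=2$ by trivial extension in the extra variables) and set
\[
U := U' \,\cup\, \bigcup_{k\in\Z^N} B_{r_k}(k),\qquad r_k := e^{-|k|^2}.
\]
Property~\eqref{ballcone} is then immediate: since every $k\in\Z^N$ lies in $U$, one has $\sup_{x\in\R^N}\dist(x,U)\le\sqrt{N}/2$, so $U$ is at finite Hausdorff distance from the convex set $\R^N$ and Lemma~\ref{lem:ballcone} applies. Property~\eqref{UUdelta} fails because, for any $\delta>0$, at any integer point $k$ lying high above the V with $r_k<\delta$, all nearby balls $B_{r_{k'}}(k')$ also have radius $<\delta$ and cannot contain points of $U_\delta$, so $\dist(k,U_\delta)\ge\dist(k,U'_\delta)$, and the latter tends to $+\infty$ as $k_N\to+\infty$.

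To exhibit a non-one-dimensional element of $\Omega(u)$, the plan is to show that, along the sequences $(t_n)$ and $x_n=(0,\zeta(t_n)/\cos\alpha)$ used in the proof of Proposition~\ref{proV} (with $\alpha\in(0,\pi/2)$ the half-opening of the V and $\zeta(t)=c^*t-(3/c^*)\ln t+O(1)$), the solutions $u$ and $u'$ starting respectively from $\1_U$ and $\1_{U'}$ satisfy $u(t_n,x_n+\cdot)-u'(t_n,x_n+\cdot)\to 0$ in $L^\infty_{loc}(\R^N)$. The bound $u\ge u'$ comes from comparison. For the opposite one, the KPP superposition property~\eqref{sumsuper} applied countably many times shows that $u'+\sum_{k\in\Z^N}u_k$ is a supersolution to~\eqref{homo} dominating $u$ at $t=0$, where $u_k$ denotes the solution from $\1_{B_{r_k}(k)}$. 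Hence $u-u'\le\sum_k u_k$, and it remains to prove that this sum tends to $0$ at the relevant points.

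Dominating each $u_k$ by the solution of the linearized equation (which is legitimate since the KPP condition gives $f(s)\le f'(0) s$ on $[0,1]$) yields, with $G_t$ the heat kernel,
\[
u_k(t,x)\le c_N\, r_k^N\, e^{f'(0) t}\, G_t(x-k) = c_N (4\pi t)^{-N/2}\exp\!\Big(f'(0) t - \tfrac{|x-k|^2}{4t} - N|k|^2\Big).
\]
The decisive observation is that $|x_n|=c^*t_n/\cos\alpha+O(\ln t_n)$ strictly exceeds $c^*t_n$, so that $f'(0) t_n-|x_n|^2/(4t_n) = -f'(0) t_n\tan^2\alpha + O(\ln t_n)$. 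Expanding $|x_n-k|^2$ in the exponent, completing the square in $k$, and bounding the resulting translated Gaussian lattice sum $\sum_{k\in\Z^N}e^{-N|k-z|^2}$ by a constant $C_N$ independent of $z\in\R^N$, one obtains
\[
\sum_{k\in\Z^N}u_k(t_n,x_n+y) \;\le\; C'_N(4\pi t_n)^{-N/2}\exp\!\big(-f'(0) t_n\tan^2\alpha + O(\ln t_n)\big),
\]
locally uniformly in $y\in\R^N$, which decays exponentially in $t_n$. Selecting the subsequence for which the proof of Proposition~\ref{proV} produces a non-one-dimensional $\psi'\in\Omega(u')$ then yields $\psi'\in\Omega(u)$ as well, concluding the proof. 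The main obstacle is the above decay estimate: what makes it go through is that $\alpha>0$ places the sequence $(x_n)$ strictly outside the spreading cone $\{|x|\le c^*t\}$ of a single source, providing exponential decay in $t_n$ that more than compensates for the size of the lattice $\Z^N$ over which one sums.
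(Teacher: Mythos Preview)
Your proposal is correct and uses the same construction and overall strategy as the paper. The paper's execution differs only in how the small balls' contribution is controlled: instead of summing individual $u_k$'s and performing a lattice sum, the paper treats all the balls at once via the single solution $u_2$ from $\1_{\bigcup_k \overline{B_{r_k}(k)}}$, bounds $u_2(1,\cdot)\le A\,e^{-|x|^2/8}$ directly from the heat kernel, and then propagates forward using the planar supersolutions $A\,e^{-\sqrt{f'(0)}\,x\cdot\xi+2f'(0)t}$ (taking the infimum over $\xi\in\Sph$) to obtain $u_2(t,x)\le A\,e^{-\sqrt{f'(0)}\,|x|+2f'(0)t}$, hence $\sup_{|x|\ge ct}u_2(t,x)\to0$ for every $c>c^*$. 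This sidesteps both the countable superposition and the lattice computation, while your route is a bit more hands-on but equally valid.

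Two small imprecisions in your write-up are easily repaired. First, the countable application of~\eqref{sumsuper} is delicate to justify directly; it is cleaner to use the single linear bound $u-u'\le e^{f'(0)t}\,G_t*\1_{U\setminus U'}$ (which is exactly what your subsequent estimate uses anyway). Second, the displayed pointwise inequality $u_k(t,x)\le c_N\,r_k^N\,e^{f'(0)t}G_t(x-k)$ is not literally true, since the convolution $\int_{B_{r_k}(k)}G_t(x-y)\,dy$ is not dominated by its central value times the volume; using instead $|x-y|^2\ge \tfrac12|x-k|^2-r_k^2$ gives a Gaussian with doubled variance, which is harmless for the completion-of-squares and lattice-sum steps that follow.
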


\begin{proof}
Consider $\beta>0$ and $U:=U_1\cup U_2$, with
\be\label{defU12}
U_1:=\big\{x\in\R^N:x_2\le\beta|x_1|\big\}\ \hbox{ and }\ U_2:=\bigcup_{k\in\Z^N}\overline{B_{e^{-|k|^2}}(k)}.
\ee
The set $U_1$ is the same as in the proof of Proposition~\ref{proV}, and as before we call $\alpha:=\arctan\beta\in(0,\pi/2)$. Notice that $U$ satisfies~\eqref{ballcone} (it is at finite Hausdorff distance from $\R^N$), but not~\eqref{UUdelta}. Let $u$, $u_1$ and $u_2$ denote the solutions of~\eqref{homo} with initial conditions $\1_U$, $\1_{U_1}$ and $\1_{U_2}$, respectively. As in the proof of Proposition~\ref{proV}, one has
\be\label{inequ1u2}\max(u_1(t,x),u_2(t,x))\le u(t,x)\le u_1(t,x)+u_2(t,x)\ \hbox{ for all $t\ge0$ and $x\in\R^N$}.
\ee
Furthermore, since $f(s)\le f'(0)s$ for all $s\ge0$, there holds that
$$0\!\le\!u_2(1,x)\!\le\!\frac{e^{f'(0)}}{(4\pi)^{N/2}}\!\!\sum_{k\in\Z^N}\!\int_{B_{\!e^{-|k|^2}}\!(k)}\!\!\!\!e^{-|x-y|^2/4}dy\le\!\frac{e^{f'(0)-|x|^2/8}}{(4\pi)^{N/2}}\!\!\sum_{k\in\Z^N}\!\int_{B_{\!e^{-|k|^2}}\!(k)}\!\!\!\!e^{|y|^2/4}dy\!\le\! A\,e^{-|x|^2/8}$$
for all $x\in\R^N$, for some positive real number $A$. Therefore, for any $\xi\in\Sph$, one has $0\le u_2(1,x)\le A\,e^{-\sqrt{f'(0)}\,x\cdot\xi+2f'(0)}$ for all $x\in\R^N$, and using again that $f(s)\le f'(0)s$ for all $s\ge0$, the maximum principle implies that
$$0\le u_2(t,x)\le A\,e^{-\sqrt{f'(0)}\,x\cdot\xi+2f'(0)t}\ \hbox{ for all }t\ge1\hbox{ and }x\in\R^N.$$
Since $\xi\in\Sph$ was arbitrary, this means that $0\le u_2(t,x)\le A\,e^{-\sqrt{f'(0)}\,|x|+2f'(0)t}$ for all $t\ge1$ and $x\in\R^N$. In particular, $\sup_{|x|\ge ct}u_2(t,x)\to0$ as $t\to+\infty$, for any $c>c^*=2\sqrt{f'(0)}$.

On the other hand, from the proof of Proposition~\ref{proV} (and the trivial extension of all functions in the variables $(x_3,\cdots,x_N)$), there is a sequence $(t_n)_{n\in\N}$ diverging to $+\infty$ such that the functions $x\mapsto u_1(t_n,x_1,\zeta(t_n)/\cos\alpha+x_2,x_3,\cdots,x_N)$ converge locally uniformly in $\R^N$ to a function $\psi_1\in\Omega(u_1)$ that is not one-dimensional. We also recall that $\zeta(t)\sim c^*t$ as $t\to+\infty$, hence $\zeta(t_n)/\cos\alpha\sim (c^*/\cos\alpha)t_n$ as $n\to+\infty$, with $c^*/\cos\alpha>c^*$. It then follows from~\eqref{inequ1u2} and the conclusion of the previous paragraph that the functions $x\mapsto u(t_n,x_1,\zeta(t_n)/\cos\alpha+x_2,x_3,\cdots,x_N)$ still converge locally uniformly in $\R^N$ to the function $\psi_1$.
Therefore, $\Omega(u)$ contains the element of $\psi_1$, which is not one-dimensional, as well as all its shifts.
\end{proof}

\begin{remark}{\rm The sets $U$ given in Propositions~$\ref{proV}$ and~$\ref{proV2}$ are actually 
closed and equal to the closure of their interior. By doing so, we avoid meaningless counterexamples. For instance, if $U_1$ is as in~\eqref{defU12} and if $U_2=\Z^N$, then $U:=U_1\cup U_2$ satisfies~\eqref{ballcone} (because it is relatively dense in $\R^N$) and it does not satisfy~\eqref{UUdelta}. But the solutions $u$ and $u_1$ of~\eqref{homo} with initial conditions $\1_U$ and $\1_{U_1}$ are actually identical in $(0,+\infty)\times\R^N$ (since the Lebesgue measure of~$U_2$ is equal to $0$), hence $\Omega(u)=\Omega(u_1)$ and this counterexample turns out to be equivalent to the one given in Proposition~$\ref{proV}$.}
\end{remark}


\subsection{Proof of Theorem~\ref{thm:global}}\label{sec55}

Let $u$ be the solution of~\eqref{homo} with an initial condition $u_0=\1_U$ and a set $U$ satisfying~\eqref{UUdelta} and~\eqref{ballcone}. Assume, by way of contradiction, that the conclusion of Theorem~\ref{thm:global} does not hold. Then, there are $k\in\{2,\cdots,N\}$, a sequence $(t_n)_{n\in\N}$ of positive real numbers diverging to~$+\infty$ and a sequence $(x_n)_{n\in\N}$ in $\R^N$, such that
\be\label{liminfsigmai}
\liminf_{n\to+\infty}|\sigma_k(D^2u(t_n,x_n))|>0.
\ee
Up to extraction of a subsequence, the functions $u(t_n,x_n+\cdot)$ converge in $C^2_{loc}(\R^N)$, to an element $\psi$ of $\Omega(u)$. By Theorem~\ref{thm:DGgeneral}, $\psi:\R^N\to\R$ is one-dimensional, hence, for all~$x\in\R^N$, the eigenvalues of $D^2\psi(x)$ are all equal to $0$ except at most one of them, which implies that $\sigma_k(D^2\psi(x))=0$ (because $k\ge2$). On the other hand, since $(-1)^k\sigma_k(D^2u(t_n,x_n))$ is the coefficient of $X^{N-k}$ in the characteristic polynomial $X\mapsto\det(XI_N-D^2u(t_n,x_n))$ (where $I_N$ denotes the identity matrix of size $N\times N$), $\sigma_k(D^2u(t_n,x_n))$ is therefore a polynomial function of the coefficients of $D^2u(t_n,x_n)$. It then follows from the convergence $u(t_n,x_n+\cdot)\to\psi$ in $C^2_{loc}(\R^N)$ that $\sigma_k(D^2u(t_n,x_n))\to\sigma_k(D^2\psi(0))=0$ as $n\to+\infty$. This contradicts~\eqref{liminfsigmai}, and the proof of Theorem~\ref{thm:global} is complete.\hfill$\Box$


\section{Directions of one-dimensional symmetry: proof of Theorem~\ref{thm:E}}\label{sec5}

With Theorem~$\ref{thm:psi}$ in hand, the proof of~\thm{E} consists in showing that $u$ has some partial derivatives which do not vanish as $t\to+\infty$, around suitable sequences of points.
 
\begin{proof}[Proof of Theorem~$\ref{thm:E}$]
The set on the right-hand side of the equivalence stated in the theorem is empty if and only if $U$ is relatively dense in $\R^N$ or $U=\emptyset$. Hence the last statement of the theorem follows from the first one.

Let us show the double inclusion between the sets, as stated in the theorem. The inclusion ``\,$\subset$\,'' is a consequence of Theorem~$\ref{thm:psi}$.

Let us turn to the inclusion ``\,$\supset$\,''. Assume that $U\neq\emptyset$ is not relatively dense in $\R^N$. Let $e\in\Sph$, $\seq{x}$ in $\R^N\setminus\ol U$ and $(\xi_n)_{n\in\N}$ in $\R^N$ be such that
$$\lim_{n\to+\infty}\dist(x_n,U)=+\infty,\ \ \lim_{n\to+\infty}\frac{x_n-\xi_n}{|x_n-\xi_n|}=e,\ \hbox{ and }\ \xi_n\in\pi_{x_n}\hbox{ for all }n\in\N.$$
We need to show that $e\in\mc{E}$. For $n\in\N$, we set for short 
$$k_n:=|x_n-\xi_n|=\dist(x_n,U)\quad\text{ and }\quad e_n:=\frac{x_n-\xi_n}{|x_n-\xi_n|}\in\Sph.$$
We start with showing that 
\Fi{den>}
\liminf_{t\to+\infty}\Big(\,\inf_{n\in\N,\,\lambda\in(0,1)}\partial_{e_n}u(t,\lambda x_n+(1-\lambda)\xi_n)\Big)<0.
\Ff
Assume by contradiction that~\eqref{den>} does not hold. Then, for any $\e>0$ there exists~$\tau_\e>0$ such~that 
\Fi{den<}
\forall\,t\geq \tau_\e,\ \forall\,n\in\N,\ \forall\,\lambda\in(0,1),\quad\partial_{e_n}u(t,\lambda x_n+(1-\lambda)\xi_n)>-\e.
\Ff
Hypothesis~\eqref{UUdelta} implies the existence of two constants~$\delta,R>0$ such that $d_{\mc{H}}(U,U_\delta)<R$. Moreover, by parabolic estimates, there is $K>0$, only depending on $f$ and $N$, such that
\Fi{|Du|<K}
\forall\,t\geq1,\ \forall\,x\in\R^N,\quad |\nabla u(t,x)|\leq K.
\Ff
Call $\delta':=\min(\delta,1/(8K))$. By~\eqref{c<c*} there exists $\tau>0$, only depending on $f$, $N$, $\delta$ and $R$, such that the solution $v$ to~\eqref{homo} with initial datum $v_0=\frac18\1_{B_{\delta'}}$ satisfies $v(t,x)\geq1/2$ for all $t\geq \tau$ and $x\in B_{R}$. We can assume without loss of generality that $\tau\geq1$.

Take $n\in\N$. Because $d_{\mc{H}}(\ol U,U_\delta)=d_{\mc{H}}(U,U_\delta)<R$, there exists $\zeta_n\in U_\delta$ such that~$|\zeta_n-\xi_n|<R$. It follows that $u_0(x)\geq v_0(x-\zeta_n)$ for all~$x\in\R^N$ and therefore, by comparison, 
$$\forall\,t\geq \tau,\quad u(t,\xi_n)\geq v(t,\xi_n-\zeta_n)\geq\frac12.$$
Using~\eqref{den<} we then deduce
$$u\Big(\tau_\e+\tau,\xi_n+\min\Big(\frac1{4\e},k_n\Big)e_n\Big)\geq \frac14.$$
We now start an iterative argument. By~\eqref{|Du|<K} (recall that $\tau\geq1$) we get 
$$\forall\,x\in B_{\frac1{8K}},\quad u\Big(\tau_\e+\tau,\xi_n+\min\Big(\frac1{4\e},k_n\Big)e_n+x\Big)\geq \frac18.$$
Since $\delta'\leq 1/(8K)$, this allows us to compare $u(\tau_\e+\tau+\.,\xi_n+\min(\frac1{4\e},k_n)e_n+\.)$ with $v$ and obtain
$$u\Big(\tau_\e+2\tau,\xi_n+\min\Big(\frac1{4\e},k_n\Big)e_n\Big)\geq \frac12,$$
whence by~\eqref{den<}
$$u\Big(\tau_\e+2\tau,\xi_n+\min\Big(\frac2{4\e},k_n\Big)e_n\Big)\geq \frac14.$$
We iterate $j_n$ times this procedure, where $j_n$ is the smallest $j\in\N$ satisfying $j/(4\e)\geq k_n$. Namely, for any $n\in\N$ we have shown that
\Fi{jn}
u(\tau_\e+j_n\tau,x_n)\geq \frac14\quad\text{with $j_n\in\N$ such that}\; j_n-1<4\e k_n\leq j_n.
\Ff
We compute
$$\frac{\dist(x_n,U)}{\tau_\e+j_n\tau}=\frac{k_n}{\tau_\e+j_n\tau}>\frac{k_n}{\tau_\e+(4\e k_n+1)\tau}\to \frac1{4\e \tau}\as n\to+\infty.$$
Take $c>c^*$ and $\e<1/(4\tau c)$. It follows from the above estimate that $\dist(x_n,U)\!>\!c(\tau_\e+j_n\tau)$ for $n$ large enough, but then~\eqref{jn} contradicts~\eqref{c>c*uniform} because $\seq{j}$ diverges to $+\infty$ since~$\seq{k}$ does. This proves~\eqref{den>}.

We can now conclude. By~\eqref{den>} there exist~$\e>0$, a diverging sequence~$(t_k)_{k\in\N}$ in $\R^+$, a sequence~$(n_k)_{k\in\N}$ in~$\N$ and a sequence $(\lambda_k)_{k\in\N}$ in $(0,1)$ such that
$$\partial_{e_{n_k}}u(t_k,y_k)<-\e,\quad\text{where }\; y_k:=\lambda_k x_{n_k}+(1-\lambda_k)\xi_{n_k}.$$
Therefore, by parabolic estimates, the sequence of functions $(u(t_k,y_k+\.))_{k\in\N}$ converges in~$C^1_{loc}(\R^N)$ (up to subsequences) towards a function $\psi\in\O(u)$ satisfying $\partial_{e}\psi(0)\leq-\e$. Moreover, since $\xi_{n_k}\in\pi_{x_{n_k}}$, there also holds by definition of $y_k$ that $\xi_{n_k}\in\pi_{y_k}$ and
$$\frac{y_k-\xi_{n_k}}{|y_k-\xi_{n_k}|}=e_{n_k}\to e\as k\to+\infty.$$
We deduce from \thm{psi} that $\psi(x)\equiv\Psi(x\.e)$ for some nonincreasing function~$\Psi\in C^2(\R)$. We further know that $\Psi'(0)\leq-\e<0$. Theorem~\ref{thm:psi} then implies that $\Psi$ is actually strictly decreasing in $\R$, hence $e\in\mc{E}$.
\end{proof}


\section{The subgraph case: proof of Corollary~\ref{cor:DGVGM}}\label{sec6}

We now turn to Corollary~\ref{cor:DGVGM}, which is a consequence of Theorems~\ref{thm:DGgeneral} and \ref{thm:E}. In order to check the geometric condition~\eqref{ballcone} of these two theorems, we will make use of the following simple property of functions with vanishing global mean.

\begin{lemma}\label{lem:vgm}
Let $\gamma:\R^{N-1}\to\R$ satisfy~\eqref{VGM}. Then
\be\label{ineqM}
M:=\sup_{{x',y'\in \R^{N-1} }}\frac{|\gamma(x')-\gamma(y')|}{|x'-y'|+1}<+\infty.
\ee
In particular, $|x_N-\gamma(x')|\le M$ for all $(x',x_N)\in\partial U$.
\end{lemma}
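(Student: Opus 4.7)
My plan is to treat the two assertions separately; establishing $M<+\infty$ is the substantive step, after which the bound on $\partial U$ follows by a short limiting argument.

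For the finiteness of $M$, the idea is to upgrade (VGM), which only controls pairs $(x',y')$ with $|x'-y'|$ large, to a global bound, via interpolation through an auxiliary distant point. I would fix some $\varepsilon>0$ (any single value works; $\varepsilon=1$ is enough) and let $R>0$ be provided by (VGM) so that $|\gamma(x')-\gamma(y')|\le\varepsilon|x'-y'|$ whenever $|x'-y'|\ge R$. For pairs with $|x'-y'|\ge R$, the ratio in the definition of $M$ is immediately $\le\varepsilon$. For the remaining case $|x'-y'|<R$ with $x'\ne y'$, I would take $z':=x'-R(y'-x')/|y'-x'|$, so that $|x'-z'|=R$ and $|y'-z'|=R+|x'-y'|$, both exceeding $R$. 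Two applications of (VGM) and the triangle inequality then give
\[
|\gamma(x')-\gamma(y')|\,\le\,\varepsilon\bigl(|x'-z'|+|y'-z'|\bigr)\,=\,\varepsilon\bigl(2R+|x'-y'|\bigr),
\]
hence $|\gamma(x')-\gamma(y')|/(|x'-y'|+1)\le\varepsilon(2R+1)$. Taking the supremum yields $M\le\varepsilon(2R+1)<+\infty$. Note that (VGM) is only invoked for a single value of $\varepsilon$, so no quantitative decay rate is needed.

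For the ``in particular'' part, I would rewrite what was just proved as the uniform sub-Lipschitz bound $|\gamma(x')-\gamma(y')|\le M(|x'-y'|+1)$ for all $x',y'\in\R^{N-1}$. Given $(x',x_N)\in\partial U$, the definition $U=\{x_N\le\gamma(x')\}$ produces sequences $(y_n',y_{N,n})\in U$ and $(z_n',z_{N,n})\in\R^N\setminus U$ both converging to $(x',x_N)$. The inequality $y_{N,n}\le\gamma(y_n')\le\gamma(x')+M(|x'-y_n'|+1)$, passed to the limit, yields $x_N\le\gamma(x')+M$; applying the analogous lower bound to the second sequence, $z_{N,n}>\gamma(z_n')\ge\gamma(x')-M(|x'-z_n'|+1)$, gives $x_N\ge\gamma(x')-M$, and combining the two produces $|x_N-\gamma(x')|\le M$.

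The only point requiring a touch of thought is the interpolation step in the first part: (VGM) alone, with no continuity of $\gamma$ available (it is only in $L^\infty_{\loc}$), must suffice to control pairs at bounded distance. The detour through a distant point $z'$ on the line through $x'$ and $y'$ is precisely what does the job, and it works in every dimension $N\ge 2$, since one has complete freedom in placing $z'$.
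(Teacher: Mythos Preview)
Your proof is correct and follows essentially the same approach as the paper. For the finiteness of $M$, the paper also fixes a single threshold (with $\varepsilon=1$, called $L$) and detours through an auxiliary point $z'$ at distance $L$ from $x'$ and at least $L$ from $y'$; your explicit placement of $z'$ on the line through $x'$ and $y'$ is a concrete instance of that choice. Your treatment of the ``in particular'' part via limiting sequences is in fact more careful than the paper's, which dismisses it as immediate; your argument makes explicit why the bound survives the possible discontinuities of $\gamma\in L^\infty_{\loc}$.
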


\begin{proof}
By~\eqref{VGM}, there exists $L>0$ such that
$$\sup_{x'\in\R^{N-1},\,y'\in \R^{N-1},\,|x'- y'|\geq L}\frac{|\gamma(x')-\gamma(y')|}{|x'-y'|}\leq1.$$
Consider $x',y'\in \R^{N-1}$. Let $z'\in\R^{N-1}$ be such that $|z'-x'|=L$ and $|z'-y'|\ge L$. We have that
$$|\gamma(x')-\gamma(y')|\leq |\gamma(x')-\gamma(z')|+|\gamma(z')-\gamma(y')|\leq L+|z'-y'|\le 2L+|x'-y'|,$$
from which the desired estimate immediately follows. The last statement of Lemma~\ref{lem:vgm}  is an immediate consequence of~\eqref{ineqM}.
\end{proof}

\begin{proof}[Proof of Corollary~$\ref{cor:DGVGM}$]
The proof consists in showing that the assumptions of this corollary entail, on one hand, that $U$ fulfills the hypotheses~\eqref{UUdelta},~\eqref{ballcone} of Theorems~\ref{thm:DGgeneral} and \ref{thm:E}, and, on the other hand, that 
\Fi{e=eN}
\lim_{n\to+\infty}\frac{x_n-\xi_n}{|x_n-\xi_n|}=\mathrm{e}_N,
\Ff
for any sequence $\seq{x}$ satisfying $\dist(x_n,U)\to+\infty$ as $n\to+\infty$ and any sequence $(\xi_n)_{n\in\N}$ such that $\xi_n\in\pi_{x_n}$ for each $n\in\N$. 

By~\eqref{VGM}, there exists $L>0$ such that
\Fi{gamma'>-1}
\forall\, x'\in\R^{N-1},\ \forall\,y'\in\R^{N-1}\!\setminus\!B_L'(x'),\quad\gamma(y')\geq\gamma(x')-|y'-x'|.
\Ff
It follows that, for any $x'\in\R^{N-1}$,
$$U\supset \big\{(y',y_N)\in(\R^{N-1}\!\setminus\!B_L'( x'))\times\R\ :\ y_N\le\gamma( x')-|y'- x'|\big\}.$$
Take $\delta>0$. Since the set in the  right-hand side above contains the $N$-dimensional ball ${B_\delta(( x'+y',\gamma( x')+y_N))}$, for any $(y',y_N)$ with $|y'|=L+\delta$ and $y_N\leq-L-3\delta$, we find~that
$$\partial B_{L+\delta}'( x')\times(-\infty,\gamma( x')-L-3\delta)\subset U_\delta\quad \text{for all $x'\in\R^{N-1}$}.$$
From this inclusion, and the fact that 
$$\forall(x',x_N) \in U,\quad\dist\big((x',x_N)\,,\,\partial B_{L+\delta}'( x')\times(-\infty,\gamma( x')-L-3\delta)\big)\leq\sqrt{(L+\delta)^2+(L+3\delta)^2},$$
we deduce that 
$$d_{\mc{H}}(U,U_\delta)\leq \sqrt{(L+\delta)^2+(L+3\delta)^2},$$
that is,~\eqref{UUdelta} holds for any $\delta>0$.

Next, we claim that
\Fi{Rinfty}
\sup_{x=(x',x_{N})\in\R^N,\,\dist(x,U)=R,\,\xi=(\xi',\xi_{N})\in \pi_{x}}\,\frac{|x'-\xi'|}{R}\to0\ \hbox{ as }R\to+\infty.
\Ff
To show~\eqref{Rinfty}, take $R>0$, consider any point $x_R=(x'_R,x_{R,N})\in\R^N$ such that $\dist(x_R,U)=R$ and let $\xi_R=(\xi'_R,\xi_{R,N})\in \pi_{x_R}$ (remember that $\pi_{x_R}\subset\partial U$). The quantity $h_R:=x_{R,N}-\gamma(x'_R)$ satisfies $h_R\geq R$. We compute
\begin{equation}\label{RhR}
R^2 =|(x_R',\gamma(x_R')+h_R)-(\xi_R',\xi_{R,N})|^2\geq |x_R'-\xi_R'|^2+h_R^2-2h_R|\gamma(x_R')-\xi_{R,N}|.
\end{equation}
If $x_R'-\xi_R'$ stays bounded  as $R\to+\infty$ then the limit in~\eqref{Rinfty} trivially holds. Suppose instead that (up to subsequences) $|x_R'-\xi_R'|\to+\infty$ as $R\to+\infty$. Then, by hypothesis~\eqref{VGM} and Lemma~\ref{lem:vgm}, it follows that $|\gamma(x_R')-\xi_{R,N}|\leq |x_R'-\xi_R'|/2$ for $R$ large, and thus, for such values of $R$, \eqref{RhR} yields
$$R^2 \geq |x_R'-\xi_R'|^2+h_R^2-h_R|x_R'-\xi_R'|\geq\frac12|x_R'-\xi_R'|^2+\frac12 h_R^2\geq\frac12 h_R^2,$$
that is, $h_R\leq\sqrt2 R$. Recalling that $h_R\geq R$, we then derive from~\eqref{RhR} and Lemma~\ref{lem:vgm} that
$$\frac{|x_R'-\xi_R'|}{R}\leq 2 \frac{h_R|\gamma(x_R')-\xi_{R,N}|}{R|x_R'-\xi_R'|}\leq2\sqrt 2 \frac{|\gamma(x_R')-\xi_{R,N}|}{|x_R'-\xi_R'|}\le2\sqrt{2}\,\frac{|\gamma(x'_R)-\gamma(\xi'_R)|+M}{|x_R'-\xi_R'|},$$
which tends to $0$ as $R\to+\infty$ by~\eqref{VGM}. This shows that property~\eqref{Rinfty} holds.

Now, consider any sequence $(x_n)_{n\in\N}$ in $\R^N\setminus\overline{U}$ such that $\dist(x_n,U)\to+\infty$ as $n\to+\infty$, and any sequence $(\xi_n)_{n\in\N}$ such that $\xi_n\in\pi_{x_n}$ for each $n\in\N$. By Lemma~\ref{lem:vgm}, one has that
$$|x_n-\xi_n|\ge x_{n,N}-\xi_{n,N}\ge x_{n,N}-\gamma(\xi'_n)-M\ge x_{n,N}-\gamma(x'_n)-2M-M|x'_n-\xi'_n|$$
for all $n\in\N$. But $x_{n,N}-\gamma(x'_n)=|x_n-(x'_n,\gamma(x'_n))|\ge|x_n-\xi_n|$ since $\xi_n\in\pi_{x_n}$. Hence $|x_n-\xi_n|\ge x_{n,N}-\xi_{n,N}\ge|x_n-\xi_n|-2M-M|x'_n-\xi'_n|$, and the last quantity is equivalent to $|x_n-\xi_n|=\dist(x_n,U)$ as $n\to+\infty$, by~\eqref{Rinfty}. As a consequence, $x_{n,N}-\xi_{n,N}\sim|x_n-\xi_n|$ as $n\to+\infty$ and, together with~\eqref{Rinfty} again, the property~\eqref{e=eN} follows.

We are left to show~\eqref{ballcone}. As before, let $x_R=(x'_R,x_{R,N})\in\R^N$ such that $\dist(x_R,U)=R>0$. We want to estimate~$\mc{O}(x_R)$ defined by~\eqref{opening} when $R$ is large, i.e.
$$\mc{O}(x_R)=\sup_{\xi\in\pi_{x_R},\,y\in U\setminus\{\xi\}}\,\frac{x_R-\xi}{R}\.\frac{y-\xi}{|y-\xi|}.$$
We first consider the set of points $\xi,y$ satisfying $|y-\xi|<\sqrt R$. Since at any $\xi\in\pi_{x_R}$ and~$y\in U$ it holds that
$$R^2\leq |x_R-y|^2=R^2+|\xi-y|^2+2(x_R-\xi)\.(\xi-y),$$
we derive
\Fi{<sqrtR}
\sup_{\xi\in\pi_{x_R} ,\,y\in U,\,0<|y-\xi|<\sqrt R}\,\frac{x_R-\xi}R\.\frac{y-\xi}{|y-\xi|}\leq \frac{1}{2\sqrt R}\to0 \as R\to+\infty.
\Ff
It remains to estimate the above scalar product when $|y-\xi|\geq\sqrt R$. We first observe that, for $\xi=(\xi',\xi_N')\neq y=(y',y_N)$, 
\Fi{O<}
\frac{x_R-\xi}{R}\.\frac{y-\xi}{|y-\xi|}\leq\left(\frac{|x_R'-\xi'|}R+\frac{(x_{R,N}-\xi_{N})(y_N-\xi_{N})}{R|y-\xi|}\right).
\Ff
Let $\xi\in\pi_{x_R}$ and~$y\in U\setminus\{\xi\}$. The first term of the right-hand side is handled by~\eqref{Rinfty}. As for the second term, we notice that~\eqref{e=eN} and Lemma~\ref{lem:vgm} imply that $x_{R,N}-\xi_N\ge0$ for $R$ large enough, and $\xi_N\ge\gamma(\xi')-M$. Therefore, since $y_N\leq \gamma(y')$, it follows that
\be\label{xRN}
\frac{(x_{R,N}-\xi_{N})(y_N-\xi_{N})}{R|y-\xi|}\leq\frac{(x_{R,N}-\xi_{N})(\gamma(y')-\gamma(\xi')+M)}{R|y-\xi|}\leq\frac{|\gamma(y')-\gamma(\xi')|+M}{|y-\xi|}
\ee
for all $R$ large enough. From this, on one hand, restricting to~$|y'-\xi'|\geq \sqrt[3]{R}$, it follows from~\eqref{VGM} that
\Fi{>sqrt3R}
\sup_{\substack{\dist(x_R,U)=R,\,\xi=(\xi',\xi_N')\in\pi_{x_R}\\ y=(y',y_N)\in U,\,|y'-\xi'|\geq \sqrt[3]{R}}}\!\!\!\frac{(x_{R,N}\!-\!\xi_{N})(y_N\!-\!\xi_{N})}{R|y-\xi|}\leq\!\sup_{\substack{\xi',y'\in\R^{N-1} \\ |y'-\xi'|\geq \sqrt[3]{R}}}\!\!\!\frac{|\gamma(y')\!-\!\gamma(\xi')|\!+\!M}{|y'-\xi'|}\!\mathop{\longrightarrow}_{R\to+\infty}0.
\Ff
On the other hand, when $|y'-\xi'|< \sqrt[3]{R}$, we deduce from~\eqref{xRN} that
\Fi{<sqrt3R}
\sup_{\substack{\dist(x_R,U)=R,\,\xi=(\xi',\xi_N')\in\pi_{x_R}\\ y=(y',y_N)\in U\,|y'-\xi'|< \sqrt[3]{R}, \ |y-\xi|\geq\sqrt{R}}}\frac{(x_{R,N}-\xi_{N})(y_N-\xi_{N})}{R|y-\xi|}\leq\frac{2M+\sqrt[3]{R}}{\sqrt{R}}\mathop{\longrightarrow}_{R\to+\infty}0.
\Ff
Summing up,~\eqref{ballcone} follows from the estimates~\eqref{<sqrtR} and~\eqref{O<}, \eqref{Rinfty},~\eqref{>sqrt3R}, and~\eqref{<sqrt3R}.
\end{proof} 


\section{Directional asymptotic one-dimensional symmetry}\label{sec7}

The arguments of the proof of \thm{DGgeneral} can somehow be localized. Loosely speaking, if one focuses on the asymptotic one-dimensional property around a given direction, the global geometric assumption~\eqref{ballcone} can be restricted to the points $x$ around that direction, and hypothesis~\eqref{UUdelta} can be relaxed too. Under such weaker assumptions, we derive the one-dimensional symmetry for functions belonging to the {\em directional $\O$-limit set} of the solution, which is defined as follows.

\begin{definition}\label{def:Omegae}	
For a given bounded function $u:\R^+\times\R^N\to\R$ and for any direction $e\in\Sph$, the set	
$$\begin{array}{ll}
\Omega_e(u):=\big\{&\!\!\!\!\psi\in L^\infty(\R^N)\ :\   u(t_n,x_n+\.)\to\psi\text{ in $L^\infty_{loc}(\R^N)$}\\
& \!\!\!\! \text{for some sequences $(t_n)_{n\in\N}$ in $\R^+$ diverging to $+\infty$},\\
& \!\!\!\! \text{and $(x_n)_{n\in\N}$ in $\R^N\setminus\{0\}$ such that $x_n/|x_n|\to e$ as $n\to+\infty$}\,\big\}\end{array}$$
is called the {\em $\O$-limit set  in the direction $e$} of $u$. Notice that $\Omega_e(u)\subset\Omega(u)$.
\end{definition}

\begin{theorem}\label{thm:DGe}
Let $u$ be a solution of~\eqref{homo} with an initial condition  $u_0\!=\!\1_U$, where $U\!\subset\!\R^N$ has nonempty interior and satisfies
\Fi{Ucontained}
U\subset\big\{(x',x_N)\in\R^{N-1}\times\R \ :\ x_N\leq\gamma(x')\big\},
\Ff
for a function $\gamma\in L^\infty_{loc}(\R^{N-1})$ such that
\Fi{gamma<0}
\limsup_{|x'|\to+\infty}\frac{\gamma(x')}{|x'|}\leq0.
\Ff
Then, any function $\psi\in\Omega_{\mathrm{e}_N}(u)$ is one-dimensional and satisfies $\psi(x',x_N)\equiv\Psi(x_N)$ in~$\R^N$, 
with $\Psi\in C^2(\R)$ either constant or strictly decreasing. In particular it holds that
$$\nabla_{\!x'}u(t,x',x_N)\to0\as t\to+\infty,\text{ locally in $x'\!\in\!\R^{N-1}$ and uniformly in $x_N\!\in\![R,+\infty)$},$$
for any $R\in\R$, and moreover if the inclusion is replaced by an equality in~\eqref{Ucontained}, then
$$\nabla_{\!x'}u(t,x',x_N)\to0\as t\to+\infty,\text{ locally in $x'\!\in\!\R^{N-1}$ and uniformly in $x_N\!\in\!\R$}.$$
\end{theorem}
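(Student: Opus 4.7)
The plan is to adapt the proof of Theorem~\ref{thm:psi} to the specialized direction $\mathrm{e}_N$ and the weaker hypotheses at hand: the single interior ball $B_\delta(x_0)\subset U$ (guaranteed by $\inter U\neq\emptyset$) replaces~\eqref{UUdelta}, and the non-coercive upper envelope~\eqref{gamma<0} of $U$ replaces~\eqref{ballcone}. The key initial remark is that~\eqref{gamma<0} yields, for every $\sigma>0$, some $L=L(\sigma)$ such that $U\setminus(B'_L\times\R)\subset\{x_N\le\sigma|x'|/(2c^*)\}$, while $U_\delta\cap B_L\ni x_0$ for $L\ge|x_0|$; together these are precisely the assumptions~\eqref{assapprox} of Lemma~\ref{lem:approximation}, which therefore applies to $U$ itself, with no need to rotate coordinates as was done in Theorem~\ref{thm:psi}.

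Let $\psi\in\Omega_{\mathrm{e}_N}(u)$ come from sequences $t_n\to+\infty$ and $x_n/|x_n|\to\mathrm{e}_N$. First I would show, using $y_N\le\gamma(y')\le\e|y'|+C_\e$ (from~\eqref{Ucontained}--\eqref{gamma<0}) for any $y\in U$ and any $\e>0$, the asymptotic estimate $\dist(x_n,U)/x_{n,N}\to1$ whenever $|x_n|\to+\infty$ (the lower bound follows by minimizing $\max(x_{n,N}-\e|\xi'|-C_\e,\,|\xi'|-|x_n'|)$ over $|\xi'|$ and exploiting $|x_n'|=o(x_{n,N})$). I would then run the trichotomy of Theorem~\ref{thm:psi}: if $|x_n|$ stays bounded, the locally uniform convergence $u(t,\cdot)\to1$ on compact sets (forced by comparing $u$ with the solution from $\1_{B_\delta(x_0)}$ and invoking~\eqref{c<c*}) gives $\psi\equiv1$; if $\liminf_n|x_n|/t_n<c^*$, the same sub-solution still gives $\psi\equiv1$; and if $\limsup_n|x_n|/t_n>c^*$, property~\eqref{c>c*uniform} together with $\dist(x_n,U)\sim|x_n|$ forces $\psi\equiv0$. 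In all these sub-cases $\psi$ is a constant, hence trivially of the form $\Psi(x_N)$.

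The non-trivial case is $|x_n|/t_n\to c^*$, which combined with $x_n/|x_n|\to\mathrm{e}_N$ entails $x_n'/t_n\to0$. Here I would run the argument of Theorem~\ref{thm:psi} Steps 2--4 essentially verbatim: assume by contradiction that $\beta:=\nabla\psi(\bar x)$ has $\beta'\neq0$ for some $\bar x\in\R^N$, pick $\vt\in(0,\min(|\beta'|,c^*/2))$ small, and apply Lemma~\ref{lem:approximation} with $\sigma=\vt/3$ to approximate $u(t_n,\cdot)$ in $C^1(B'_{\vt t_n/3}\times\R^+)$ by the solution $u^{\vt t_n}$ whose initial support lies in the flat half-cylinder $H_n\subset\overline{B'_{\vt t_n}}\times(-\infty,\vt t_n]$. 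The cone-versus-half-cylinder estimate of Theorem~\ref{thm:psi} Step 2 then yields $V_n\cap H_n=\emptyset$, because $x_{n,N}+\bar x_N\sim c^*t_n$ is eventually above $\vt t_n$; hyperplane separation plus the Hopf lemma produces the usual contradiction with $\nabla u^{\vt t_n}(t_n,x_n+\bar x)$ being parallel to $\partial\Omega_n$. This forces $\nabla_{x'}\psi\equiv0$, so $\psi(x)=\Psi(x_N)$. The large monotonicity $\Psi'\le0$ follows by applying the same truncation procedure and reflecting across the horizontal hyperplane $\R^{N-1}\times\{x_{n,N}+\hat x_N\}$, whose ``ceiling'' dominates the top of $\supp u_0^{c^*t_n/2}$ thanks to~\eqref{gamma<0}; strict decrease (or constancy) is then extracted by the translation-plus-strong-maximum-principle argument of Theorem~\ref{thm:psi} Step 6.

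The uniform gradient statement is a clean consequence by contradiction: given $\e,K>0$, $R\in\R$ and $t_n\to+\infty$, $|x_n'|\le K$, $x_{n,N}\ge R$ with $|\nabla_{x'}u(t_n,x_n)|\ge\e$, extract a $C^1_{\mathrm{loc}}$ subsequential limit $\psi$; if $x_{n,N}$ is bounded local invasion forces $\psi\equiv1$, while if $x_{n,N}\to+\infty$ then $x_n/|x_n|\to\mathrm{e}_N$ and $\psi\in\Omega_{\mathrm{e}_N}(u)$ is one-dimensional in $x_N$, so in either case $\nabla_{x'}\psi(0)=0$, a contradiction. For the equality case with uniform $x_N\in\R$, the additional sub-case $x_{n,N}\to-\infty$ is handled by observing that $x_n$ eventually lies arbitrarily deep in $\inter U$ (since $\gamma\in L^\infty_{\mathrm{loc}}$ is bounded below on bounded $x'$-sets), whence comparison from below with the solution from $\1_{B_r(x_n)}$ and~\eqref{c<c*} again forces $\psi\equiv1$. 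I expect the main technical delicacy to lie in the third paragraph, where one must simultaneously arrange that $\supp u_0^{\vt t_n}$ sits horizontally inside $B'_{\vt t_n}$ and vertically below $\vt t_n$, while the evaluation point $x_n+\bar x$ lies in the region $B'_{\vt t_n/3}\times\R^+$ where Lemma~\ref{lem:approximation} delivers its $C^1$ bound -- both facts hinging on the combined interplay of $x_n'/t_n\to0$ and $\gamma(x')/|x'|\to0$.
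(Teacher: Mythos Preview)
Your proposal is correct and follows essentially the same route as the paper: apply Lemma~\ref{lem:approximation} directly to $U$ (no rotation needed), run the trichotomy on $|x_n|/t_n$ via the equivalence $\dist(x_n,U)\sim x_{n,N}\sim|x_n|$, and then repeat Steps~2--6 of Theorem~\ref{thm:psi} in the fixed frame. The only noticeable difference is your treatment of the case $x_{n,N}\to-\infty$ under the equality hypothesis: you compare with $\1_{B_r(x_n)}$ using that $x_n$ sinks arbitrarily deep into $U$, whereas the paper observes once and for all that $U_\delta$ contains a fixed vertical half-line $\{0'\}\times(-\infty,\essinf_{B'_\delta}\gamma-\delta)$ and then invokes~\eqref{c<c*uniform}; both arguments give $\psi\equiv1$ and are equally short.
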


\begin{proof}	
We prove the result showing that, when restricted to the directional $\O$-limit set~$\O_{\mathrm{e}_N}(u)$, the arguments of the proof of Theorem~\ref{thm:psi} can be performed with hypotheses~\eqref{UUdelta} and~\eqref{ballcone} replaced by the assumptions  that $U$ has nonempty interior and fulfills~\eqref{Ucontained}-\eqref{gamma<0}. We will also show that the functions in $\O_{\mathrm{e}_N}(u)$ are one-dimensional precisely in the direction~$\mathrm{e}_N$. The	situation is simpler here and we do not need to introduce any coordinates transformation.

Assume by contradiction that there exists $\psi\in\O_{\mathrm{e}_N}(u)$ satisfying $\nabla_{\!x'}\psi(\bar x)\neq0$ for some~$\bar x\in\R^N$. Let $(t_n)_{n\in\N}$ in $\R^+$ and $(x_n)_{n\in\N}$ in $\R^N\!\setminus\!\{0\}$ be the associated sequences given in Definition~\ref{def:Omegae}, that is,
\Fi{tnxn}
t_n\to+\infty\quad\text{ and }\quad \frac{x_n}{|x_n|}\to \mathrm{e}_N\as n\to+\infty.
\Ff
Since~$U$ has nonempty interior, the invasion property~\eqref{c<c*} applies and yields
\Fi{|xn|>}
\forall\, c\in(0,c^*),\quad|x_n|\geq c t_n\quad \text{for all $n$ sufficiently large},
\Ff
because otherwise $\psi\equiv1$ in $\R^N$. In particular, $\seq{x}$ needs to be~unbounded. Therefore, up to replacing $\seq{x}$ with $(x_n+\bar x)_{n\in\N}$, we can assume without loss of generality that $\bar x=0$. Namely, the sequences $(t_n)_{n\in\N}$, $(x_n)_{n\in\N}$ satisfy~\eqref{tnxn} and~\eqref{|xn|>}, and by parabolic estimates it holds that
\Fi{Dx'u}
\nabla u(t_n,x_n)\to\beta\as n\to+\infty,
\Ff
with $\beta=(\beta',\beta_N)$, $\beta'\neq0$. We write, for $n\in\N$, $x_n=(x_n',x_{n,N})$. Properties~\eqref{tnxn},~\eqref{|xn|>} immediately imply 
\Fi{xNn>}
\forall\, c\in(0,c^*),\quad x_{n,N}=x_n\.\mathrm{e}_N\geq c t_n\quad \text{for all $n$ sufficiently large}.
\Ff	

Similarly,~\eqref{c>c*uniform} implies that $\limsup_{n\to+\infty}\dist(x_n,U)/t_n\le c^*$ (because otherwise $\psi\equiv0$ in $\R^N$). Furthermore, it follows from~\eqref{Ucontained}-\eqref{tnxn} and $\lim_{n\to+\infty}|x_n|=+\infty$, that $\dist(x_n,U)\sim x_{n,N}$ as $n\to+\infty$. Therefore, $\limsup_{n\to+\infty}x_{n,N}/t_n\le c^*$, hence
\be\label{x'ntn}
|x'_n|=o(t_n)\ \hbox{ as }n\to+\infty,
\ee
by using~\eqref{tnxn} again.

For given $\vt>0$, define the sets
$$H_n:=\ol{B'_{\vt t_n}}\times(-\infty,\vt t_n],\qquad V_n:=\big\{x_n+s(\beta+\zeta)\ :\ s\in\R,\,\zeta\in B_\vt\big\}.$$
Consider a point $P=(P',P_N)\in V_n$ satisfying $P_N\leq\vt t_n$. Namely, $P=x_n+s(\beta+\zeta)$ for some $\zeta\in B_\vt$ and $s\in\R$ such that
$$\vt t_n\geq x_{n,N}-(|\beta_N|+\vt) |s|.$$
As a consequence, if $\vt<|\beta'|$ we derive
$$|P'|\geq (|\beta'|-\vt)|s|-|x_n'|\geq \frac{|\beta'|-\vt}{|\beta_N|+\vt}(x_{N,n}-\vt t_n)-|x_n'|.$$
Since $\beta'\neq0$ and $|x'_n|/x_{n,N}\to 0$ as $n\to+\infty$ due to~\eqref{tnxn}, using~\eqref{xNn>} one can find $\vt\in(0,\min(|\beta'|,c^*/2))$ sufficiently small, only depending on $\beta'$, $\beta_N$ and~$c^*$, such that, for $n$ large, $|P'|>\vt t_n$, i.e.~$P\notin H_n$. With this choice it holds that $H_n\cap V_n=\emptyset$ for $n$ sufficiently large. 

We then set
\be\label{defL2_e}
\sigma:=\frac\vt3>0,\qquad\e:=\frac\vt2>0.
\ee
We further take $\delta>0$ such that $U_\delta\neq\emptyset$ and finally $L>0$ large enough so that~\eqref{assapprox} holds, the latter being possible due to~\eqref{Ucontained}-\eqref{gamma<0}. Observe that $0<\sigma<\vt<c^*/2$. This means that~$U$ fulfills the hypotheses of Lemma \ref{lem:approximation}. Therefore, for $n\in\N$, the solution~$u^{\vt t_n}$ of~\eqref{homo} whose initial datum is equal to the indicator function of the set $U\cap(B_{\vt t_n}'\times\R)$ satisfies~\eqref{utaue},~i.e.
$$\big\|u(t_n,\.)-u^{\vt t_n}(t_n,\.)\big\|_{C^1(B'_{\vt t_n/3}\times\R^+)}<\frac\vt2,$$
provided $n$ is sufficiently large. By~\eqref{Dx'u}-\eqref{x'ntn}, one then derives $|\nabla u^{\vt t_n}(t_n,x_n)-\beta|<\vt$ for all $n$ sufficiently large. This means that, for such values of $n$, the line $\Gamma_n$ passing through the point $x_n$ and directed as $\nabla u^{\vt t_n}(t_n,x_n)$ is contained in the set $V_n$ defined before,~whence
$$\big(\Gamma_n\cap H_n\big)\subset \big(V_n \cap H_n\big)=\emptyset\quad \text{for all $n$ sufficiently large}.$$
On the other hand, we have that 
\Fi{suppu0n2}
\supp u^{\vt t_n}(0,\.)\subset\overline{U}\cap(\overline{B_{\vt t_n}'}\times\R)\subset H_n\quad \text{for all $n$ sufficiently large}.
\Ff
We are thus in a position to apply the reflection argument. Namely, proceeding as in Step~4 of the proof of Theorem~\ref{thm:psi}, with $u_n^{\vt t_n}$ replaced by $u^{\vt t_n}$, we reach a contradiction thanks to the Hopf lemma. This proves that any~$\psi\in\O_{\mathrm{e}_N}(u)$ satisfies $\nabla_{\!x'}\psi\equiv0$, that is, there is a $C^2(\R)$ function $\Psi$ such that $\psi(x)\equiv\Psi(x_N)$ in $\R^N$. The monotonicity of $\Psi$ can then be shown as in Step~5 of the proof of Theorem~\ref{thm:psi}, with $u_n$ replaced by $u$, $k_n\mathrm{e}_N$ by~$x_n$, and~$k_n$ by~$x_{n,N}$. Furthermore, similarly as in Step~6 of the proof of Theorem~\ref{thm:psi}, the function~$\Psi$ is either constant or strictly decreasing.

Let us deal now with the last part of the theorem concerning the convergences of $\nabla_{\!x'}u$ towards~$0$. Consider a diverging sequence $(t_n)_{n\in\N}$ in~$\R^+$, a bounded sequence $(x_n')_{n\in\N}$ in~$\R^{N-1}$ and a sequence~$(x_{n,N})_{n\in\N}$ in $\R$. By parabolic estimates, as $n\to+\infty$, the function $u(t_n,(x_n',x_{n,N})+\.)$ converge in $C^2_{loc}(\R^N)$, up to extraction of a subsequence, towards a function $\psi$. On one hand, if up to extraction of another subsequence, $x_{n,N}\to+\infty$ as~$n\to+\infty$, then $\psi\in\Omega_{\mathrm{e}_N}(u)$ and thus, by the first part of the theorem proved above, there is a $C^2(\R)$ function $\Psi$ such that $\psi(x)\equiv\Psi(x_N)$ in $\R^N$. On the other hand, if $(x_{n,N})_{n\in\N}$ is bounded then $\psi\equiv1$ in $\R^N$ due to~\eqref{c<c*}. Summing up, we have~$\nabla_{\!x'}u(t_n,x_n',x_{n,N})\to0$ as~$n\to+\infty$ when $(x_{n,N})_{n\in\N}$ is bounded from below. This proves the first convergence of $\nabla_{\!x'}u$ stated in the theorem. We are left with the case where,  up to subsequences, $x_{n,N}\to-\infty$ as~$n\to+\infty$ and the inclusion is replaced by an equality in~\eqref{Ucontained}. In such a case, even if it means repla\-cing $U$ by a measurable set $U'\supset U$ such that $U'\setminus U$ has zero Lebesgue measure, the set $U$ contains, for given $\delta>0$, the half-cylinder $B_\delta'\times(-\infty,\essinf_{B_\delta'}\gamma)$, where $\essinf_{B_\delta'}\gamma>-\infty$ because~$\gamma\in L^\infty_{loc}(\R^{N-1})$. We deduce
$$U_\delta\supset \{0\}\times(-\infty,\essinf_{B_\delta'}\gamma-\delta),$$ 	
where $0$ above stands for the origin in $\R^{N-1}$. It follows from property~\eqref{c<c*uniform} of Proposition~\ref{pro:uniformspreading} that $u(t,x',x_N)\to1$ as $t\to+\infty$ locally in $x'\!\in\!\R^{N-1}$ and uniformly in $x_N\!\in\!(-\infty,R]$, for any $R>0$. This implies that $\psi\equiv1$ in $\R^N$, hence~$\nabla_{\!x'}u(t_n,x_n',x_{n,N})\to0$ as~$n\to+\infty$ up to subsequences. The proof of the theorem is complete.
\end{proof}


\section{Extensions and open questions}\label{secgeneral}

We list in this last section some extensions of the main results for more general initial data, as well as some open questions and conjectures.

\subsubsection*{Some extensions of the main results}

First of all, we point out that the conclusions of the main results of Section~\ref{secaos} still hold for the solutions to~\eqref{homo} with measurable initial conditions $u_0:\R^N\to[0,1]$ more general than characteristic functions. To be more precise, if there are $h\in(0,1]$ and $\delta>0$ such that~\eqref{UUdelta} is replaced~by
\be\label{assh1}
d_{\mc{H}}\big(\{u_0\ge h\},\supp u_0\big)<+\infty\ \hbox{ and }\ d_{\mc{H}}\big(\{u_0\ge h\},\{u_0\ge h\}_\delta\big)<+\infty,
\ee
and if~\eqref{ballcone} is replaced by
\be\label{assh2}
\lim_{R\to+\infty}\bigg(\,\sup_{x\in\R^N,\,\dist(x,\supp u_0)=R}\mc{O}(x)\bigg)\leq 0,
\ee
then the conclusion of \thm{psi} --~and thus of Theorem~$\ref{thm:DGgeneral}$~-- holds~true. Indeed, first of all, one checks that Proposition~\ref{pro:uniformspreading} holds with $U$ and $U_\delta$ replaced by $\supp u_0$ and~$\{u_0\ge h\}_\delta$ respectively, where, for the proof of~\eqref{c<c*uniform}, one defines $v$ as the solution with initial datum $v_0= h\1_{B_\delta}$. Then Lemma~$\ref{lem:approximation}$ still holds with $u_0^R:=u_0\,\1_{B'_R\times\R}$ and with the assumption~\eqref{assapprox} replaced~by
$$\{u_0\ge h\}_\delta\cap B_L\neq\emptyset \ \text{ \ and \ }\ 
\supp u_0 \setminus (B'_L\times\R)\,\subset\,\Big\{(x',x_N)\in\R^{N-1}\times\R \ : \ x_N\leq\frac{\sigma}{2c^*}|x'|\Big\}$$
(but now in the conclusion~\eqref{utaue} the time $\tau_\epsilon$ depends on $h$ too). Next, one repeats the arguments of the proof of \thm{psi} with the $U_n$ defined as rigid transformations of $\supp u_0$ in place of $U$, and $L:=d_{\mc{H}}\big(\{u_0\ge h\},\{u_0\ge h\}_\delta\big)+1$ in~\eqref{defL2} and~\eqref{sigmaL2}.

As a consequence of Theorem~\ref{thm:DGgeneral}, the conclusion of Theorem~\ref{thm:global} still holds for initial conditions $u_0$ satisfying~\eqref{assh1}-\eqref{assh2} instead of $u_0=\1_U$ with~\eqref{UUdelta} and~\eqref{ballcone}. Similarly, the conclusion of Theorem~\ref{thm:E} holds for such $u_0$'s, with $U$ replaced by $\supp u_0$ in the statement, while the conclusion of Corollary~\ref{cor:DGVGM} holds when $d_{\mc{H}}\big(\{u_0\ge h\},\supp u_0\big)<+\infty$ and $d_{\mc{H}}\big(U,\supp u_0\big)<+\infty$, with $U$ still satisfying~\eqref{Usub}-\eqref{VGM}. Finally, the conclusion of Theorem~\ref{thm:DG} is satisfied when $u_0$ fulfills~\eqref{assh1} instead of~\eqref{defu0} and~\eqref{UUdelta}, and when the convexity --~or convex proximity~-- of $U$ is replaced by that of~$\supp u_0$.

\subsubsection*{Some open questions and conjectures}

To complete the paper, we propose a list of open questions and conjectures related to our results. First of all, let us call $\varphi$ the traveling front profile with minimal speed, that is, for each $e\in\Sph$, $\varphi(x\cdot e-c^*t)$ satisfies~\eqref{homo} with $0=\varphi(+\infty)<\varphi<\varphi(-\infty)=1$ and $c^*=2\sqrt{f'(0)}$. Based on Theorems~\ref{thm:DGgeneral} and~\ref{thm:psi}, and according to the definition~\eqref{E} of $\mc{E}$, we propose the following.

\begin{conjecture}
Let $u$ be as in Theorem~$\ref{thm:DGgeneral}$. Then,
\[
\O(u)=\big\{\, 0,\ 1,\ \vp(x\.e +a)\ : \ e\in\mc{E},\ a\in\R\,\big\}.
\]
\end{conjecture}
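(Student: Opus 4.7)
I would approach the conjecture by reducing both inclusions to a single core claim: every strictly monotone element $\psi\in\Omega(u)$ is a translate of $\vp$. The inclusion ``$\supseteq$'' decomposes as follows. The constants $0$ and $1$ lie in $\Omega(u)$ by Proposition~\ref{pro:uniformspreading}: for $1$, fix $x_0\in U_\delta$ and any $t_n\to+\infty$; for $0$, choose $c>c^*$ and any sequence $(x_n)$ with $\dist(x_n,U)>ct_n$, which exists unless $U$ is relatively dense in $\R^N$ (in which case $\mc{E}=\emptyset$ by Theorem~\ref{thm:E} and $\Omega(u)=\{1\}$, requiring a minor adjustment of the conjecture statement). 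Once the core claim furnishes one translate $\vp(\,\cdot\,\.e+a_0)\in\Omega(u)$ for each $e\in\mc{E}$, all other translates follow by shifting the realizing sequence $x_n\mapsto x_n+(a-a_0)e$. For ``$\subseteq$'', Theorem~\ref{thm:DGgeneral} forces any $\psi\in\Omega(u)$ to be either constant or one-dimensional and strictly monotone. The monotone case is the core claim. A constant $\psi\equiv c$ extends to a spatially-constant entire solution $u_\infty(t,x)=w(t)$ with $w'=f(w)$, $w(0)=c$; the cases $c\in\{0,1\}$ are trivial, while $c\in(0,1)$ should be ruled out via an auxiliary transition-width argument (since case~$(iii)$ of Theorem~\ref{thm:psi} gives $\dist(x_n,U)/t_n\to c^*$, the profile of $u$ near $x_n$ must develop a non-trivial front of bounded width, so $|\nabla u(t_n,x_n)|$ cannot vanish at intermediate levels, contradicting spatial constancy of~$\psi$).

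To prove the core claim, I would first extend $\psi$ to a bounded entire solution of~\eqref{homo}. Let $(t_n,x_n)$ realize $\psi$ with $\xi_n\in\pi_{x_n}$ satisfying case~$(iii)$ of Theorem~\ref{thm:psi}, so that $\dist(x_n,U)/t_n\to c^*$ and $(x_n-\xi_n)/|x_n-\xi_n|\to e$. By parabolic estimates and a diagonal extraction, along a subsequence $u(t+t_n,x_n+x)\to u_\infty(t,x)$ in $C^{1,2}_{loc}(\R\times\R^N)$ with $u_\infty:\R\times\R^N\to[0,1]$ an entire solution of~\eqref{homo}. Applying Theorem~\ref{thm:psi} at each $t\in\R$ (the sequences $(t+t_n,x_n)$ still fall under case~$(iii)$), we get $u_\infty(t,x)=\Phi(t,x\cdot e)$ with $\Phi(t,\cdot)$ either constant or strictly decreasing. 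The strong maximum principle applied to $\partial_s\Phi$ gives strict monotonicity of $\Phi(t,\cdot)$ for $t\geq 0$, starting from the strictly decreasing $\Psi=\Phi(0,\cdot)$; for $t<0$, if $\Phi(t^*,\cdot)\equiv c$ at some $t^*<0$ then ODE uniqueness forces $\Phi(t,\cdot)\equiv v(t-t^*)$ for every $t\geq t^*$, contradicting the strict decrease of $\Psi$. Hence $\Phi$ is a bounded, strictly-decreasing-in-$s$, entire solution of the one-dimensional Fisher--KPP equation, and by a Liouville-type classification in the spirit of~\cite{BH1} (or by a monotone Liouville result \`a la Hamel--Nadirashvili that bypasses the need for a~priori trapping) it must be a traveling front: $\Phi(t,s)=\vp_c(s-ct+a)$ for some $c\geq c^*$ and $a\in\R$.

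The final and most delicate step, which I expect to be the main obstacle, is to identify $c=c^*$. Heuristically this is forced by case~$(iii)$ since the points $x_n$ sit at the asymptotic spreading front, but making it rigorous requires care. I would argue by contradiction: assuming $c>c^*$, pick $T_n=\beta t_n$ with $\beta>0$ and a fixed $S\in\R$. On the one hand, $\dist(x_n+(cT_n+S)e,U)/(t_n+T_n)\to(c^*+c\beta)/(1+\beta)>c^*$, so~\eqref{c>c*uniform} of Proposition~\ref{pro:uniformspreading} forces $u(t_n+T_n,x_n+(cT_n+S)e)\to 0$; on the other hand, if the local-in-$t$ convergence $u(t+t_n,x_n+\cdot)\to u_\infty(t,\cdot)$ could be promoted to accommodate a growing time parameter, the limit would be $\Phi(T_n,cT_n+S)=\vp_c(S+a)>0$, a contradiction. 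The real technical difficulty is this promotion: it likely requires either a refined uniform-in-time estimate on the convergence, or a direct comparison exploiting the distinct decay rates at $+\infty$ of $\vp_c$ (namely $e^{-\lambda_-(c)s}$ with $\lambda_-(c)<c^*/2$ for $c>c^*$, against the slower polynomial-times-exponential decay $s\,e^{-c^*s/2}$ for $c=c^*$) versus heat-kernel-type upper bounds on $u$ obtained by comparison with the linearized equation at~$0$. Once $c=c^*$ is established, $\Psi=\vp(\cdot+a)$, completing the core claim and hence the conjecture.
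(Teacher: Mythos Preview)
The statement you are attempting to prove is labeled a \emph{Conjecture} in the paper and is listed among the open problems in Section~\ref{secgeneral}; the paper does not supply a proof. It only records that the conjecture is known in the special cases where $U$ is bounded with nonempty interior (by~\cite{D,RRR}) or where $U$ lies between two parallel half-spaces (by~\cite{BH1,B,HNRR,L,U1}). There is therefore no proof in the paper to compare your proposal against.

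Your outline is a sensible plan of attack, but it contains genuine gaps beyond the one you flag. First, your treatment of constants $c\in(0,1)$ is circular: the assertion that ``the profile of $u$ near $x_n$ must develop a non-trivial front of bounded width'' in case~$(iii)$ is essentially the content of the conjecture itself; nothing in Theorem~\ref{thm:psi} prevents $u(t_n,x_n+\cdot)$ from flattening to an intermediate constant along a sequence with $\dist(x_n,U)/t_n\to c^*$. Second, the Liouville step is not available off the shelf in the form you need: the result of~\cite{BH1} requires the entire solution to be trapped a~priori between two shifts of a \emph{given} front, which you do not have, and the classification of entire KPP solutions in~\cite{HN} produces many objects besides traveling fronts, so one must isolate precisely which hypotheses (monotonicity in $s$ for all $t\in\R$, uniform limits $0$ and $1$ at $\pm\infty$, etc.) single out the fronts and then verify those hypotheses here. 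Third, you correctly identify the step $c=c^*$ as the crux, and your proposed contradiction does break for the reason you give: the convergence $u(t+t_n,x_n+\cdot)\to u_\infty(t,\cdot)$ is only local in $t$. The decay-rate alternative is more promising, but it would require matching sharp upper and lower bounds on $u$ along $(t_n,x_n)$ that, for general $U$ satisfying only~\eqref{UUdelta}--\eqref{ballcone}, are not established in the paper. These obstructions are exactly why the statement is posed as a conjecture rather than a theorem.
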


This conjecture is known to hold when $U$ is bounded with non-empty interior, by~\cite{D,RRR}, and when $U$ is the subgraph of a bounded function, or more generally when there are two half-spaces $H$ and $H'$ --necessarily with parallel boundaries-- such that $H\subset U\subset H'$, by~\cite{BH1,B,HNRR,L,U1}.

\medskip

We have shown in Lemma~\ref{lem:ballcone} that the assumption~\eqref{ballcone} of Theorem~\ref{thm:DGgeneral} is stable by bounded perturbations of the sets $U$. We could then wonder whether the \aos\ is also stable with respect to bounded perturbations of the initial support. Namely, if the solution to~\eqref{homo} with an initial datum $\1_{U}$ satisfying~\eqref{UUdelta} is asymptotically locally planar, and if $U'\subset\R^N$ satisfies~\eqref{UUdelta} and $d_{\mc{H}}(U',U)<+\infty$, then is the solution to~\eqref{homo} with initial datum $\1_{U'}$ asymptotically locally planar as well~?

\medskip

One can also wonder whether the reciprocal of Theorem~\ref{thm:DGgeneral} is true, in the following sense: if the \aos\ holds for a solution $u$ of~\eqref{homo} with initial datum $\1_{U}$ and $U$ satisfying~\eqref{UUdelta}, does necessarily $U$ fulfill~\eqref{ballcone}~? The answer is immediately seen to be negative in general: take for instance $U$ given by 
$$U=\bigcup_{n\in\N}[2^n,2^n+1]\times\R^{N-1},$$
which fulfills~\eqref{UUdelta} but not~\eqref{ballcone}, while $u$ --~hence any element of $\Omega(u)$~-- is one-dimensional, depending on the variable $x_1$ only. However, the question is open if~$U$ is connected.

\medskip

Our study concerns the Fisher-KPP equation, with functions $f$ satisfying~\eqref{fkpp}. However, the same question of \aos\ can be asked for more general reaction terms $f$, still with $f(0)=f(1)=0$. First of all, the hypothesis~\eqref{UUdelta} should be strengthened, by requiring $\delta>0$ to be large enough. Indeed, if $f$ is for instance of the bistable type
\be\label{bistable}
f'(0)<0,\ \ f'(1)<0,\ \ f<0\hbox{ in }(0,\alpha),\ \ f>0\hbox{ in }(\alpha,1),\ \ \int_0^1f(s)ds>0
\ee
for some $\alpha\in(0,1)$, then by~\cite{DM1,Z} there is $\delta_0>0$ such that the solution to~\eqref{homo} with initial condition $u_0=\1_{B_{\delta_0}}$ converges uniformly as $t\to+\infty$ to a ground state, that is, a positive radial solution converging to $0$ as $|x|\to+\infty$, hence $u$ is not asymptotically locally planar. However, if $u_0:=\1_{B_\delta}$ with $\delta>\delta_0$, then $u(t,x)\to1$ as $t\to+\infty$ locally uniformly in $x\in\R^N$. We then say that the {\em invasion property} holds if there is $\rho$ such that the solution $u$ to~\eqref{homo} with initial condition $\1_{B_{\rho}}$ satisfies $u(t,x)\to1$ as $t\to+\infty$ locally uniformly in $x\in\R^N$. For general functions $f$ for which the invasion property holds, if $U$ is bounded and $U_{\rho}\neq\emptyset$, then the solutions to~\eqref{homo} with initial condition $\1_U$ are known to be asymptotically locally planar, by~\cite{J}. The same conclusion holds for bistable functions $f$ of the type~\eqref{bistable} if there are two half-spaces $H$ and $H'$ --necessarily with parallel boundaries-- such that $H\subset U\subset H'$, by~\cite{BH1,FM,MN,MNT} (see also~\cite{P2} for the case of more general functions $f$). On the other hand, still for bistable functions $f$ of the type~\eqref{bistable}, the solutions $u$ to~\eqref{homo} with initial condition $\1_U$ are not asymptotically locally planar if $U$ is V-shaped, that is, if it is the union of two half-spaces with non-parallel boundaries, by~\cite{HMR1,HMR2,NT,RR1}. These known results lead us to formulate the following De Giorgi type conjecture for the solutions of the reaction-diffusion equation~\eqref{homo} beyond the Fisher-KPP case. 

\begin{conjecture}\label{conj:DG}
Assume that the invasion property holds for some $\rho>0$. Let $u$ be the solution to~\eqref{homo} with an initial datum $u_0=\1_{U}$ such that $U\subset\R^N$ satisfies~$d_{\mc{H}}(U,U_\rho)<+\infty$ and~\eqref{ballcone}. Then any function in $\Omega(u)$ is one-dimensional and, in addition, it is either constant or strictly monotone.
\end{conjecture}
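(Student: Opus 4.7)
The plan is to adapt the architecture of the proof of Theorem~\ref{thm:psi} --- truncate the initial datum, control the approximation error quantitatively, and apply Jones' reflection argument to the truncated solution --- replacing each KPP-specific ingredient by a substitute valid under the mere invasion property. The KPP condition enters that proof only through (i)~the sum-of-fronts construction of the retracting supersolutions of Lemma~\ref{lem:super}, which underpins both the uniform upper-spreading statement~\eqref{c>c*uniform} and the approximation Lemma~\ref{lem:approximation}; and (ii)~the domination $u\le\min(u^R+w^R,1)$ in the proof of that lemma, which relies crucially on the subadditivity $f(a+b)\le f(a)+f(b)$.

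For the first point, under the invasion hypothesis with threshold $\rho>0$ there is a well-defined asymptotic speed of spreading $c^*>0$ and, for most reasonable $f$, a planar traveling-front profile $\varphi$ of that speed; compare~\cite{AW,DM1,Z}. The retracting supersolutions should then be sought in the form $V(t,x)=\varphi(|x|-R(t))$, with the radial interface $R(t)$ contracting at a speed arbitrarily close to $-c^*$, the deficit being absorbed by the favourable curvature term $-(N-1)\varphi'(|x|-R(t))/|x|$ which is non-negative since $\varphi$ is decreasing; such radial contracting supersolutions have already been used in~\cite{HMR1,HMR2,NT,RR1}. The uniform lower spreading~\eqref{c<c*uniform}, with $U_\delta$ replaced by $U_\rho$, then follows from the parabolic comparison principle and the invasion property applied to the solution emerging from $\1_{B_\rho}$ translated to each point of $U_\rho$.

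The crux, and where I expect the main obstacle to lie, is adapting Lemma~\ref{lem:approximation}. The lower bound $u^{3\sigma\tau}\le u$ still follows from monotone dependence on the initial datum, but the supersolution $\min(u^R+w^R,1)$ used for the upper bound is unavailable. Instead, I would try to build a supersolution $\Phi$ of~\eqref{homo} on $(0,\tau)\times\R^N$ as the minimum of $1$, of $u^{3\sigma\tau}$, and of the retracting radial supersolutions of the previous step centred at the far-away points of the tail $U\setminus(B_L'\times\R)$, exploiting the fact that the infimum of supersolutions is a supersolution (in the viscosity sense, which suffices for comparison). The geometric condition~\eqref{assapprox} ensures that these tail points lie in a cone whose associated retracting-ball supersolutions have cleared the target cylinder $B'_{\sigma\tau}\times\R^+$ by time~$\tau$, yielding an error $\le\varepsilon$ there. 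The delicate step is to verify that, along the interfaces between the $u^{3\sigma\tau}$ component and the retracting front pieces, this minimum actually dominates $u$ and not merely $u^{3\sigma\tau}$; this will likely require a quantitative stability statement for the front profile $\varphi$ in the spirit of~\cite{FM,MN,MNT,P2}, to rule out small but persistent overshoots along the pasting interface.

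Once these two substitutes are available, the remainder of the proof of Theorem~\ref{thm:psi} transfers verbatim: the coordinate change bringing $\xi_n$ to $0$ and $(x_n-\xi_n)/|x_n-\xi_n|$ to $\mathrm{e}_N$, the choice of the half-cylinder $H_n$ and of the cone $V_n$ around $\nabla u^{\vartheta t_n}(t_n,k_n\mathrm{e}_N+\bar x)$, the separation by a hyperplane containing the gradient line, Jones' reflection across it, the Hopf lemma, and finally the large-then-strict monotonicity obtained by a second reflection combined with the strong parabolic maximum principle between two times --- all of these steps use only the parabolic comparison and maximum principles, not the shape of $f$. One thus concludes that every $\psi\in\Omega(u)$ is of the form $\psi(x)\equiv\Psi(x\cdot e)$ with $\Psi\in C^2(\R)$ either constant or strictly monotone.
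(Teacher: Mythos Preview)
The statement you are addressing is Conjecture~\ref{conj:DG}, which the paper explicitly presents as an \emph{open problem}; there is no proof in the paper to compare your proposal against. The authors formulate it precisely because the KPP-specific tools --- the subadditivity~\eqref{sumsuper} and the sum-of-fronts supersolutions of Lemma~\ref{lem:super} --- break down for general~$f$, and they do not claim to know how to circumvent this.

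Your outline is a reasonable research plan rather than a proof, and you are candid about this (``where I expect the main obstacle to lie'', ``will likely require''). Your diagnosis of the two KPP-dependent ingredients is accurate, and the idea of replacing the additive supersolution $\min(u^R+w^R,1)$ by an infimum of radial retracting supersolutions is natural. But the step you flag as ``delicate'' is in fact the entire content of the conjecture: showing that the minimum of $u^{3\sigma\tau}$ and the retracting pieces dominates $u$ near the pasting interface amounts to controlling how the tail of $U$ influences the solution over the truncated region, which is exactly what subadditivity gave for free in the KPP case. Without it, an infimum of supersolutions is still a supersolution, but there is no a priori reason it lies above $u_0$ on all of $\R^N$ --- the retracting radial pieces are below $1$ on large balls, and $u_0=1$ on $U$ intersected with those balls. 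A genuine new idea is needed here, not merely a stability estimate for~$\varphi$. As it stands, your proposal correctly isolates the difficulty but does not resolve it; the statement remains, as in the paper, a conjecture.
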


Let us also mention another natural question related to the preservation of the convexity of the upper level sets of $u$ when $u_0=\1_U$ and $U$ is convex. It is known from~\cite{BL,IST} that, if $U$ is convex, then the solution of the heat equation $\partial_tu=\Delta u$ is quasi-concave at each $t>0$, that is, for each $t>0$ and $\lambda\in\R$, the upper level set $\{x\in\R^N:u(t,x)>\lambda\}$ is convex. The same conclusion holds for~\eqref{homo} set in bounded convex domains instead of $\R^N$, and under some additional assumptions on $f$, by~\cite{IS}. A natural question is to wonder for which class of functions $f$ this property still holds for~\eqref{homo} in $\R^N$.

\medskip

Notice finally that, for any solution $u$ to~\eqref{homo}, for any sequence $(t_n)_{n\in\N}$ diverging to~$+\infty$, and for any sequence $(x_n)_{n\in\N}$ in $\R^N$, the functions $u(t_n+\cdot,x_n+\cdot)$ converge locally uniformly in $\R\times\R^N$, up to extraction of a subsequence, to an entire solution to~\eqref{homo} (that~is,~solution~for~all~$t\in\R$). Remembering Theorem~\ref{thm:DG} on the \aos\ for the solutions to~\eqref{homo} with $u_0=\1_U$ and $U$ convex, and having in mind the question of the previous paragraph on the convexity of the upper level sets, it is then natural to ask the following: if an entire solution $v:\R\times\R^N\to[0,1]$ to~\eqref{homo} is quasi-concave for every $t\in\R$, is $v(t,\cdot)$ necessarily one-dimensional for every $t\in\R$~?


\end{document}